\numberwithin{equation}{section} 
\newcommand{\assign}{\coloneqq}
\newcommand{\equallim}{\mathop{=}\limits}
\newcommand{\mathd}{\mathop{}\!\mathrm{d}}
\newcommand{\tmop}[1]{\ensuremath{\operatorname{#1}}}
\newcommand{\tmtextit}[1]{{\itshape{#1}}}
\newenvironment{enumeratealpha}{\begin{enumerate}[{\textup{(}}a{\textup{)}}] }{\end{enumerate}}
\newenvironment{enumerateroman}{\begin{enumerate}[{\textup{(}}i{\textup{)}}] }{\end{enumerate}}
\newtheorem{theorem}{Theorem}[section]
\newtheorem{corollary}[theorem]{Corollary}
\newtheorem{definition}[theorem]{Definition}
\newtheorem{lemma}[theorem]{Lemma}
\theoremstyle{remark}
\newtheorem{remark}[theorem]{Remark}
\theoremstyle{definition}
\newtheorem{assumption}{Assumption} 
\begin{document}

\title{ \bfseries 
  Noether theorem in stochastic optimal control\\
  problems via contact symmetries
}

\date{} 

\author[1]{Francesco C.~De Vecchi\thanks{\href{mailto:francesco.devecchi@uni-bonn.de}{francesco.devecchi@uni-bonn.de}}}

\author[2]{Elisa Mastrogiacomo\thanks{\href{mailto:elisa.mastrogiacomo@uninsubria.it}{elisa.mastrogiacomo@uninsubria.it}}}

\author[1]{\\Mattia Turra\thanks{\href{mailto:mattia.turra@iam.uni-bonn.de}{mattia.turra@iam.uni-bonn.de}}}

\author[3]{Stefania~Ugolini\thanks{\href{mailto:stefania.ugolini@unimi.it}{stefania.ugolini@unimi.it}}} 

\affil[1]{\normalsize Institute for Applied Mathematics and
	  Hausdorff Center for Mathematics,
	  University of Bonn, Germany}

\affil[2]{Dipartimento di Economia,
	  Universit{\`a} degli Studi dell'Insubria, Italy}

\affil[3]{Dipartimento di Matematica,
	  Universit{\`a} degli Studi di Milano, Italy}

\maketitle

\begin{abstract}
	We establish a generalization of Noether theorem for stochastic optimal control problems. 
	Exploiting the tools of jet bundles and contact geometry, we prove that from any (contact) symmetry of the Hamilton-Jacobi-Bellman equation associated to an optimal control problem it is possible to build a related local martingale.
	Moreover, we provide an application of the theoretical results to Merton's optimal portfolio problem, showing that this model admits infinitely many conserved quantities in the form of local martingales. 
	
	\medskip
	
	\noindent {\em Keywords:} Noether theorem, stochastic optimal control, contact symmetries, Merton's optimal portfolio problem.
	
	\medskip
	
	\noindent {\em 2020 Mathematics Subject Classification:} 93E20; 58D19; 91G10; 60H15.
\end{abstract}

\section{Introduction}

The concept of symmetry of ordinary or partial differential equations (ODEs and PDEs) was introduced by Sophus Lie at the end of the 19th century with the aim of extending the Galois theory from polynomial to differential equations. 
Actually, all the theory of Lie groups and algebras was developed by Lie himself as well as the principal tools for facing the problem of symmetries of differential equations (see~\cite{hawkins_emergence_2012} for an historical introduction to the subject and~\cite{olver_applications_1993,stephani_differential_1989} for some modern presentations).

\medskip

One of the most important application of the study of symmetries in physical systems was provided by Emmy Noether. 
She understood that when an equation comes from a variational problem, such as in Lagrangian mechanics, general relativity or, more generally, field theory, it is possible to relate each symmetry of the equation to a conserved quantity, i.e.,~a function of the state of the system that does not change during the evolution of the dynamics, and conversely, to each conserved quantity it is possible to associate a symmetry of the motion.
The simplest examples are, in Newtonian and Lagrangian mechanics, the conservation of energy, that is related to the invariance with respect to time translation, and the conservation of angular momentum, which is correlated to the invariance with respect to rotations (see, e.g.,~\cite{arnold_mathematical_1989,olver_applications_1993}).

\medskip

The development of a Lie symmetry analysis for stochastic differential equations (SDEs) and general random systems is relatively recent (see, e.g.,~\cite{ADMUweak,ADMUrandom,DMUanote,DMUgirsanovreduction,DMUgirsanov,DRUnumerical,Gaeta2019,Gaeta2017,Gaeta2020,Kozlov2018,Liao2019} for some recent developments in the non-variational case). 
For stochastic systems arising from a variational framework, it is certainly interesting to study the relation between their symmetries and functionals which are conserved by their flow, and, in particular, to establish stochastic generalizations of Noether theorem.\\
The problem of finding some kinds of conservation laws for SDEs was discussed in various papers (see~\cite{arnaudon_stochastic_2017,Baez2013,Luo2018,lescot_isovectors_2004,Lescot_Zambrini2008,Misawa1994,Misawa19942,privault_stochastic_2010,thieullen_symmetries_1997,zambrini_geometry_2009}). 
We could summarize three different approaches to this problem. 
The first one was considered by Misawa in~\cite{Misawa1994,Misawa19942,Misawa1999}, where the author studied the case in which some Markovian functions of solutions of SDEs are exactly conserved during time evolution. \\
The second approach was adopted by Zambrini and co-authors in a number of works. 
They put themselves in the framework of Euclidean quantum mechanics, which represents a geometrically consistent stochastic deformation of classical mechanics where a Gaussian noise is added to a classical system. 
This setting has a close connection with optimal transport and optimal control (see, e.g.,~\cite{Zambrini2015} for an introduction to the topic). 
More precisely, in~\cite{thieullen_symmetries_1997} a generalization of Noether theorem has been proved: to any one-parameter symmetry of a variational problem it is possible to associate a martingale which is independent both from the initial and final condition of the system. This first step was quite important since it stressed that the suitable generalization of conserved quantities in a stochastic setting is not a function that remains constant during the time evolution of a stochastic system, but a function that is constant in mean. 
Another remarkable advance in the study of variational symmetries was achieved in~\cite{lescot_isovectors_2004,Lescot_Zambrini2008,zambrini_geometry_2009}, where it was noted that the symmetries of the Hamilton-Jacobi-Bellman (HJB) equation of the considered variational problem are the correct objects to be associated to the aforementioned martingales and the contact geometry is a good framework in which a stochastic version of Noether theorem can be formulated. 
Indeed, to each Lie point symmetry of the HJB equation it is possible to associate a martingale for the evolution of the system. 
It is worth also mentioning the papers~\cite{arnaudon_stochastic_2017,privault_stochastic_2010}, where a suitable notion of integrable system, i.e.,~a system with a number of martingales and symmetries equal to the number of the dimension, is discussed.\\
The third approach was proposed by Baez and Fong in~\cite{Baez2013} (see also~\cite{Luo2018}). 
The authors showed a method to build martingales applying the action of symmetries to solution to backward Kolmogorov equation, that can be interpreted as a linear version of HJB equation obtained when the control and the objective function are trivial. 

\medskip 

In our paper, we generalize at least along two directions the approach proposed by Zambrini and co-authors, as listed above.
First, we work in a different optimal control setting that can be seen as a generalization of the variational framework described in their articles.
Second, we do not only restrict to Lie point symmetries but we take advantage of the general notion of contact symmetry, namely a transformation preserving the contact structure of the jet space (see Section~\ref{s:sym}). \\
We prove here a Noether theorem (Theorem~\ref{theorem_noether}) which relates to any contact symmetry of the HJB equation associated with an optimal control problem, a martingale that is given by the generator of the contact symmetry. 
More precisely, if we consider the generator $\Omega(t,x,u,u_{x})$ of a contact symmetry (which is a regular function defined on the jet space $J^1(\mathbb{R}^n,\mathbb{R})$, i.e.,~a map depending on a function $u$ and on its first derivatives $u_x$), a regular solution $U(t,x)$ to the HJB equation and the solution $X_t$ to the optimal control problem, then the process $O_t=\Omega(t,X_t,U(t,X_t),\nabla U(t,X_t))$, obtained by composing the generator $\Omega$ with the function $U$ and the process $X$, is a local martingale.\\
Furthermore, we generalize Noether theorem also to the case where the coefficients and the Lagrangian of the control problem are random. 
Indeed, we establish that Noether theorem holds also in the case of stochastic HJB equation, introduced in~\cite{peng_stochastic_1992} by Peng to study the optimal control problem with stochastic final condition or stochastic Lagrangian, provided that we restrict ourselves to a subset of Lie point symmetries (Theorem~\ref{theorem_noether2} and Corollary~\ref{cor:noether2}).\\
Finally, the present paper provides an application of our theory to a non-trivial interesting problem arising in mathematical finance, that is Merton's optimal portfolio problem. 
First proposed by Merton in~\cite{Merton1969lifetime}, this model finds nowadays many different applications and generalizations (see~\cite{rogers_optimal_2013} for a review of the original problem and various generalizations and~\cite{benth_merton_2003,biagini_robust_2017,fouque_portfolio_2017,Oksendal2016} for some more recent works on the subject). 
A particular form of Noether theorem for this problem can be found in~\cite{askenazy_symmetry_2003}. 
We show here that the HJB equation of this optimal control system admits infinitely many contact symmetries. 
It is important to notice that the contact symmetry generalization is essential in this specific problem, since, when we restrict to Lie point symmetries as it is done in the aforementioned literature, the equation admits only a finite number of infinitesimal invariants. 
The presence of infinitely many contact symmetries yields the possibility to construct infinitely many martingales whose means are preserved by the evolution of the system. 
Moreover, we also point out that, when the final condition is random or the coefficients of the evolution of the stock are general adapted processes, our stochastic generalization of Noether theorem (Corollary~\ref{cor:noether2}) allows us to construct some non-trivial martingales for this classical mathematical model. 
We think that the presence of these martingales could be related to the existence of many explicit solutions for Merton's problem, and therefore we expect that the methods presented here can be used to build other explicit solutions for~it. 
We plan to study in a future work the financial consequences of the conservation laws individuated in this paper.

\medskip

Since the stochastic and geometrical frameworks are not so commonly put together, we also provide a concise introduction to both these subjects. 

\subsection*{Plan of the paper}

The paper is organized as follows.
Section~\ref{s:opt-ctrl} introduces stochastic optimal control problems both in the deterministic and stochastic case, presenting also the HJB equation, and it is useful also to fix the notations that we adopt throughout the paper. 
Contact symmetries and their properties in the PDEs setting are discussed in Section~\ref{s:sym}. 
Section~\ref{s:noether} contains the main theoretical results of the paper, namely Noether theorems for deterministic and stochastic HJB equations. 
The application of such results to Merton's optimal portfolio problem is given in Section~\ref{s:merton}.

\section{A brief survey on stochastic optimal control}\label{s:opt-ctrl}

We give here an overview of some results about stochastic control problems, referring the interested reader to~\cite{fleming_deterministic_1975,pham_continuous-time_2009,touzi_optimal_2012,yong_stochastic_1999}
for further investigations on such results, though more precise references
will be given throughout the section. 
The main aim of this section is to introduce
the topics we will deal with and to give the tools from the stochastic
optimal control theory that we will use later on in the paper.

\subsection{Deterministic optimal control and Lagrange mechanics} \label{sec:lagrange}

We start recalling some notions about deterministic optimal control and, in particular, we focus on Lagrangian-type optimal control problems, i.e., problems arising from Lagrangian formulation of classical mechanics. 
More precisely, we consider a system of controlled ODEs of the form
\begin{equation}
  \mathd X_t^i = \alpha_t^i \mathd t, \label{eq:lagrange1}
\end{equation}
where $X_{\cdot} = (X^1_{\cdot}, \ldots, X_{\cdot}^n)\colon [t_0, T] \rightarrow
\mathbb{R}^n$ is in $C^1 ([t_0, T], \mathbb{R}^n)$, $t_0, T \in \mathbb{R}$,
with $t_0 \leqslant T$, are the initial time and the final time horizon,
respectively, and $\alpha_{\cdot} = (\alpha^1_{\cdot}, \ldots,
\alpha^n_{\cdot}) \in C ([t_0, T], \mathbb{R}^n)$ is the control
function. We want to maximize the following objective functional
\begin{equation}
  J (t_0, x, \alpha) = \int_{t_0}^T L (X_s^{t_0, x}, \alpha_s) \mathd s + g
  (X_T^{t_0, x}) . \label{eq:lagrange2}
\end{equation}
where $X^{t_0, x}_t$ is the solution to the ODE {\eqref{eq:lagrange1}} such
that $X^{t_0, x}_{t_0} = x \in \mathbb{R}^n$.

We suppose that there exists only one smooth function $\mathcal{A} \colon \mathbb{R}^n \times
\mathbb{R}^n \rightarrow \mathbb{R}^n$ such that
\[ \sum_{i = 1}^n \mathcal{A}^i (x, p) p_i + L (x, \mathcal{A} (x, p)) = \underset{a \in
   \mathbb{R}^n}{\sup} \left\{ \sum_{i = 1}^n a^i p_i + L (x, a) \right\}, \qquad (x,p) \in \mathbb{R}^n \times \mathbb{R}^n, \]
and also that, for any $x \in \mathbb{R}^n$, the map $\mathcal{A} (x, \cdot) \assign
(\mathcal{A}^1 (x, \cdot), \ldots, \mathcal{A}^n (x, \cdot))$ is smoothly invertible in all its
variables as a function from $\mathbb{R}^n$ into itself. Define then the
PDE
\begin{equation}
  u_t - H (x, u_x) = u_t - \left( \sum_{i = 1}^n \mathcal{A}^i (x, u_x) \, u_{x^i} + L (x,
  \mathcal{A} (x, u_x)) \right) = 0, \label{eq:lagrange3}
\end{equation}
where $u_x = (u_{x^1}, \ldots, u_{x^n})$. Equation~{\eqref{eq:lagrange3}} is
usually referred to as {\emph{Hamilton-Jacobi equation}} in the context of
Lagrangian mechanics or {\emph{Hamilton-Jacobi-Bellman equation}} in the one
of optimal control theory.

We state now the deterministic version of the so-called \emph{verification theorem}.
\begin{theorem}
  Let \ $U (t, x) \in C^1 ([t_0, T] \times \mathbb{R}^n, \mathbb{R})$ be a
  solution to Hamilton-Jacobi equation~{\eqref{eq:lagrange3}}. Then the
  optimal control problem~{\eqref{eq:lagrange1}} with objective
  functional~{\eqref{eq:lagrange2}} admits a unique solution, for any $x \in
  \mathbb{R}^n$, given, for every $i=1,\ldots,n$,~by
  \[ \alpha^i_t = \mathcal{A}^i (X_t, \nabla U (t, X_t)), \qquad \text{for every } t\in [t_0,T] . \]
\end{theorem}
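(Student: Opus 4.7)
The plan is the classical verification argument from deterministic optimal control: construct a candidate optimal control in feedback form, then use the HJB equation along trajectories to compare any admissible control against this candidate and conclude optimality and uniqueness simultaneously.

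First, I would define the feedback candidate $\alpha_t^{\star} \assign \mathcal{A}(X_t^{\star}, \nabla U(t, X_t^{\star}))$, where $X^{\star}$ is the solution of the closed-loop ODE $\mathd X_t^{\star} = \alpha_t^{\star} \, \mathd t$ with $X_{t_0}^{\star} = x$. The smoothness of $\mathcal{A}$ together with the $C^1$ regularity of $U$ yield local existence and uniqueness of $X^{\star}$ from classical ODE theory; extendability to all of $[t_0, T]$ must be assumed implicitly (it holds under mild additional growth conditions on $\mathcal{A}$ and $\nabla U$).

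The core is a single chain-rule computation. For any admissible pair $(X, \alpha)$ governed by~\eqref{eq:lagrange1},
\[
  \frac{\mathd}{\mathd t} U(t, X_t) = U_t(t, X_t) + \sum_{i=1}^n \alpha_t^i \, U_{x^i}(t, X_t).
\]
I would then substitute the HJB equation~\eqref{eq:lagrange3} to eliminate $U_t$, and combine this with the defining variational inequality $\sum_i a^i p_i + L(x, a) \le H(x, p)$, which becomes equality if and only if $a = \mathcal{A}(x, p)$ by the hypothesised uniqueness of the maximiser. This produces a sign-definite pointwise estimate comparing $\frac{\mathd}{\mathd t} U(t, X_t)$ to $L(X_t, \alpha_t)$, saturated precisely along the feedback trajectory. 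Integrating from $t_0$ to $T$ and using the natural terminal condition $U(T, \cdot) = g$ implicit in the statement, one obtains $U(t_0, x) \ge J(t_0, x, \alpha)$ for every admissible $\alpha$, with equality if and only if $\alpha_t = \mathcal{A}(X_t, \nabla U(t, X_t))$ almost everywhere. This delivers in one stroke optimality of $\alpha^{\star}$, its explicit feedback form, and uniqueness.

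The main technical point, more subtle than a true obstacle, is ensuring global well-posedness of the closed-loop ODE on $[t_0, T]$: $C^1$ regularity of $U$ only gives local existence, and some additional growth or Lipschitz control on $\mathcal{A}(\cdot, \nabla U(t, \cdot))$ is needed to extend to the full interval. Strict uniqueness of the optimiser rests entirely on the hypothesised uniqueness of the Legendre-type maximiser $\mathcal{A}(x, p)$ in the definition of $H$; without it, the argument still identifies $U(t_0, x)$ with the optimal value, but the minimiser could fail to be unique.
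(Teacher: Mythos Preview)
Your proposal is correct and is precisely the classical verification argument. The paper itself does not supply a proof at all: it simply cites Theorem~4.4 in Fleming--Rishel, and what you have written is essentially the content of that reference, including the observations about the implicit terminal condition $U(T,\cdot)=g$ and the need for global well-posedness of the closed-loop ODE.
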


\begin{proof}
  See, e.g., Theorem~4.4 in~\cite{fleming_deterministic_1975}.
\end{proof}

\begin{remark}
  \label{remark:EulerLagrange}It is important to note that, in the
  deterministic case and when $U \in C^{1, 2} ([t_0, T] \times \mathbb{R}^n,
  \mathbb{R})$, i.e.,~$U$ is differentiable one time with respect to time $t$
  and two times with respect to space $x \in \mathbb{R}^n$, the function $t
  \mapsto \alpha_t$ is $C^1 ([t_0, T], \mathbb{R}^n)$ and it satisfies the
  \tmtextit{Euler-Lagrange equations}
  \begin{equation}
    \frac{\mathd}{\mathd t} (\partial_{a^i} L (X_t, \alpha_t)) -
    \partial_{x^i}^{} L (X_t, \alpha_t) = 0, \label{eq:EulerLagrange}
  \end{equation}
  where $i = 1, \ldots, n$.
\end{remark}

\subsection{Classical stochastic optimal control problem}\label{s:det-hjb}

An optimal control problem consists in maximizing an objective
functional, depending on the state of a dynamical system, on which we can act
through a control process.

Let $K$ be a (convex) subset of~$\mathbb{R}^d$ and fix a final time $T > 0$.
Denote by $W$ an $m$-dimensional Brownian motion on a filtered probability
space $(\Omega, \mathcal{F}, (\mathcal{F}_t)_{t \geq 0}, \mathbb{P})$, where
$(\mathcal{F}_t)_{t \geq 0}$ is the natural filtration generated by~$W$. We
assume that the state of the system is modeled by the following stochastic
differential equation~(SDE)
\begin{equation}\label{eq:sde-socp}
	\begin{cases}
	 \mathd X_t = \mu (t,X_t, \alpha_t) \mathd t + \sigma (t,X_t, \alpha_t) \mathd
	 W_t, & t_0 < t \leq T,\\
	 X_{t_0} = x, & 
	\end{cases}
\end{equation}
where $\mu \colon \mathbb{R}_+ \times \mathbb{R}^n \times \mathbb{R}^d \rightarrow \mathbb{R}^n$ and
$\sigma \colon \mathbb{R}_+ \times \mathbb{R}^n \times \mathbb{R}^d \rightarrow \mathbb{R}^{n \times
m}$ are measurable functions that are also Lipschitz-continuous on the set~$K$, i.e., there exists a constant $C \geqslant 0$, such that, for every $t\in \mathbb{R}_+$, $x, y
\in \mathbb{R}^n$, $a \in K$,
\begin{equation}
  \lvert \mu (t,x, a) - \mu (t,y, a) \rvert + \lVert \sigma (t,x, a) - \sigma (t,y, a) \rVert \leqslant C
  \lvert x - y \rvert , \label{eq:lip-mu-sigma}
\end{equation}
where $\lVert \sigma \rVert^2 = \operatorname{tr} (\sigma^\ast \sigma) $.
We will also use the notation $\mu = (\mu^i)_{i = 1, \ldots, n}$ and $\sigma =
(\sigma^i_{\ell})_{i = 1, \ldots, n, \ell = 1, \ldots, m}$.

The {\emph{control}} process $\alpha = (\alpha_s)$, appearing in~\eqref{eq:sde-socp}, is a $K$-valued progressively measurable process with respect to the filtration~$(\mathcal{F}_t)_{t \geq 0}$. 
We denote by $\mathcal{K}$ the set of control processes $\alpha$ such that
\begin{equation}
  \mathbb{E} \left[ \int_0^T (\lvert \mu (t, 0, \alpha_t) \rvert^2 + \lVert \sigma (t, 0, \alpha_t)
  \rVert^2) \mathd t \right] < + \infty . \label{eq:cond-mu-sigma-ctrl}
\end{equation}
We call $X^{t_0, x}_t$, $t \in [t_0, T]$ the solution to the SDE~{\eqref{eq:sde-socp}}.

\begin{remark}
  Conditions {\eqref{eq:lip-mu-sigma}}--{\eqref{eq:cond-mu-sigma-ctrl}} imply
  that, for any initial condition $(t_0, x) \in [0, T) \times \mathbb{R}^n$
  and for all $\alpha \in \mathcal{K}$, there exists a unique strong solution
  $X^{x, t_0}_t$ to the SDE~{\eqref{eq:sde-socp}} (see, e.g., Theorem~2.2 in Chapter~4 of~\cite{ikeda_stochastic_1989}).
\end{remark}

Let now $L \colon \mathbb{R}_+ \times \mathbb{R}^n \times \mathbb{R}^d \rightarrow \mathbb{R}$ and $g \colon
\mathbb{R}^n \rightarrow \mathbb{R}$ be two measurable functions such that
\begin{enumerateroman}
  \item $g$ is bounded from below,
  \item $g$ satisfies the quadratic growth condition $| g (x) | \leqslant C (1
  + | x |^2)$, for every $x \in \mathbb{R}^n$, for some constant $C$
  independent of~$x$.
\end{enumerateroman}
For $(t_0, x) \in [0, T) \times \mathbb{R}^n$, we denote by $\mathcal{K}_L
(t_0, x)$ the subset of controls in $\mathcal{K}$ such that
\[ \mathbb{E} \left[ \int_{t_0}^T | L (t,X_t^{t_0, x}, \alpha_t) | \mathd t
   \right] < + \infty . \]
We consider an {\emph{objective function}} of the following form
\begin{equation*}
  J (t_0, x, \alpha) =\mathbb{E} \left[ \int_{t_0}^T L (s,X_s^{t_0, x},
  \alpha_s) \mathd s + g (X^{t_0, x}_T) \right] . 
\end{equation*}
We are now in position to introduce the {\emph{stochastic optimal control
problem}}.

\begin{definition}
  \label{def:socp}Fixed $(t_0,x) \in [0,T) \times \mathbb{R}^n$, the {\emph{stochastic optimal control
  problem}} consists in maximizing the objective function $J (t_0, x, \alpha)$ over
  all $\alpha \in \mathcal{K}_L (t_0, x)$ subject to the
  SDE~{\eqref{eq:sde-socp}}. The associated {\emph{value function}} is then
  defined~as
  \begin{equation*}
    U (t_0, x) = \max_{\alpha \in \mathcal{K}_L (t_0, x)} \mathbb{E} \left[
    \int_{t_0}^T L (t,X^{t_0, x}_t, \alpha_t) \mathd t + g (X^{t_0, x}_T) \right] .
  \end{equation*}
  Given an initial condition $(t_0, x) \in [0, T) \times \mathbb{R}^n$, we
  call $\alpha^{\ast} \in \mathcal{K}_L (t_0, x)$ an {\emph{optimal
  control}}~if
  \[ J (t_0, x, \alpha^{\ast}) = U (t_0, x) . \]
\end{definition}

We call {\emph{Hamilton-Jacobi-Bellman equation}} (HJB) the PDE
\begin{equation}\label{eq:hjb1}
\begin{cases} 
    \displaystyle{\partial_t \varphi (t, x) + \sup_{a \in K} \{ \mathcal{L}^a_t
    \varphi (t, x) + L (t,x, a) \} = 0,} & (t, x) \in [t_0, T) \times
    \mathbb{R}^n,\\
    \varphi (T, x) = g (x), & x \in \mathbb{R}^n,
\end{cases} 
\end{equation}
where $\mathcal{L}^a_t$ is the {\emph{Kolmogorov operator}} associated with
equation~{\eqref{eq:sde-socp}}, namely, for $\psi \in C^2 (\mathbb{R}^n)$,
\[ \mathcal{L}^a_t \psi (x) = \frac{1}{2}  \sum_{i, j = 1}^n \eta^{i j} (t,x, a)
   \partial_{x^i x^j} \psi (x) + \sum_{i = 1}^n \mu^i (t,x, a) \partial_{x^i}
   \psi (x), \qquad  (t,x, a) \in \mathbb{R}_+ \times \mathbb{R}^n \times K, \]
with $\eta^{i j}$ defined, for every $i, j \in \{ 1, \ldots, n \}$, as
\[ \eta^{i j} (t,x, a) = (\sigma \sigma^{\top})^{i j}  (t,x, a) = \sum_{\ell =
   1}^m \sigma^i_{\ell} (t,x, a) \sigma^j_{\ell} (t,x, a), \qquad (t,x, a)
   \in \mathbb{R}_+\times \mathbb{R}^n \times K. \]
We also write, for $x \in \mathbb{R}^n$, $p \in \mathbb{R}^n$ and $q \in
\mathbb{R}^{n \times n}$,
\[ H (t,x, p, q) = \sup_{a \in K} \left\{
   \frac{1}{2}  \sum_{i, j = 1}^n \eta^{i j} (t,x, a) q^{i j} + \sum_{i = 1}^n
   \mu^i (t,x, a) p^i + L (t,x, a) \right\}, \]
so that the HJB equation~{\eqref{eq:hjb1}} can be written also in the
following way
\begin{equation}
  \begin{cases}
    \partial_t \varphi (t, x) + H (t,x, \nabla \varphi, D^2 \varphi) =
    0, & (t, x) \in [t_0, T) \times \mathbb{R}^n,\\
    \varphi (T, x) = g (x), & x \in \mathbb{R}^n .
  \end{cases} \label{eq:hjbH}
\end{equation}

We state here the classical {\emph{verification theorem}}.
\begin{theorem}
  \label{thm:verification1}Let $\varphi \in C^{1, 2} ([0, T) \times
  \mathbb{R}^n) \cap C^0 ([0, T] \times \mathbb{R}^n)$ be a solution to the
  HJB equation~{\eqref{eq:hjbH}} for $t_0 = 0$, satisfying the following
  quadratic growth, for some constant $C$,
  \begin{equation}
    | \varphi (t, x) | \leqslant C (1 + | x |^2), \qquad \text{for all } (t,x) 
    \in [0, T] \times \mathbb{R}^n . \label{eq:quadratic}
  \end{equation}
  Suppose that there exists a measurable function $A^{\ast} (t, x)$, $(t, x)
  \in [0, T) \times \mathbb{R}^n$, taking values in $K$, such that
  \begin{enumerateroman}
    \item We have
    \[ 0 = \partial_t \varphi (t, x) + H (t,x, \nabla \varphi,
       D^2 \varphi) = \partial_t \varphi (t, x)
       +\mathcal{L}^{A^{\ast} (t, x)}_t \varphi (t, x) + L (t,x, A^{\ast} (t, x)),
    \]
    \item The SDE
    \[ \mathd X_s = \mu (s,X_s, A^{\ast} (s, X_s)) \mathd s + \sigma (s,X_s, A^{\ast} (s,
       X_s)) \mathd W_s, \]
    with initial condition $X_t = x$, admits a unique solution $X^{\ast}_s$,
    \item The process $A^{\ast} (s, X^{\ast}_s)$, $s \in [t, T]$ lies in
    $\mathcal{K}_L (t, x)$. 
  \end{enumerateroman}
  Then
  \[ \varphi (t,x) = U(t,x), \qquad (t,x) \in [0, T] \times \mathbb{R}^n, \]
  and $A^{\ast}(\cdot,X^\ast_\cdot)$ is an optimal control for the stochastic optimal control
  problem in Definition~\ref{def:socp}.
\end{theorem}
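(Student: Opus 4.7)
The plan is to adapt the classical Itô-based verification argument. Fix an arbitrary starting point $(t, x) \in [0, T) \times \mathbb{R}^n$ and an arbitrary admissible control $\alpha \in \mathcal{K}_L(t, x)$, with associated state process $X^{t, x}$ solving~\eqref{eq:sde-socp}. Since $\varphi \in C^{1, 2}([0, T) \times \mathbb{R}^n)$, I apply Itô's formula to the process $s \mapsto \varphi(s, X^{t, x}_s)$ on $[t, T - \varepsilon]$ (kept strictly inside the open domain), obtaining
\begin{equation*}
  \varphi(T - \varepsilon, X^{t, x}_{T - \varepsilon}) - \varphi(t, x) = \int_t^{T - \varepsilon} \bigl(\partial_s \varphi + \mathcal{L}^{\alpha_s}_s \varphi\bigr)(s, X^{t, x}_s) \,\mathd s + \int_t^{T - \varepsilon} \nabla \varphi(s, X^{t, x}_s) \cdot \sigma(s, X^{t, x}_s, \alpha_s) \,\mathd W_s.
\end{equation*}
Because $\varphi$ solves the HJB equation~\eqref{eq:hjbH} and $H$ is a pointwise supremum, one has $\partial_s \varphi(s, y) + \mathcal{L}^{a}_s \varphi(s, y) + L(s, y, a) \leq 0$ for every $(s, y, a)$, with equality along $a = A^*(s, y)$ by hypothesis~(i).

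The central technical step is to argue that the stochastic integral on the right-hand side is a true martingale with zero expectation, and to pass to the limit $\varepsilon \downarrow 0$. I would proceed by localization: introduce the stopping times $\tau_n \assign \inf\{s \geq t : |X^{t, x}_s| \geq n\} \wedge (T - \varepsilon)$, so that the stopped stochastic integral is a genuine martingale. Taking expectations and using the HJB inequality at the integrand level yields
\begin{equation*}
  \varphi(t, x) \geq \mathbb{E}\!\left[\int_t^{\tau_n} L(s, X^{t, x}_s, \alpha_s) \,\mathd s + \varphi(\tau_n, X^{t, x}_{\tau_n})\right].
\end{equation*}
Passing to the limit $n \to \infty$ and then $\varepsilon \downarrow 0$ is where the quadratic growth assumption~\eqref{eq:quadratic} is decisive: combined with the standard moment bound $\mathbb{E}[\sup_{s \in [t, T]} |X^{t, x}_s|^2] < \infty$ (a consequence of the Lipschitz assumption~\eqref{eq:lip-mu-sigma} and the integrability condition~\eqref{eq:cond-mu-sigma-ctrl}), it provides a uniformly integrable majorant for $\varphi(\tau_n, X^{t, x}_{\tau_n})$, so by dominated convergence and continuity of $\varphi$ up to $T$ together with the terminal condition $\varphi(T, \cdot) = g$, the right-hand side converges to $J(t, x, \alpha)$. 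The integrability of $L$ along $\alpha$ (built into the definition of $\mathcal{K}_L(t, x)$) handles the running cost. This produces $\varphi(t, x) \geq J(t, x, \alpha)$, and taking the supremum over $\alpha$ gives $\varphi(t, x) \geq U(t, x)$.

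For the reverse inequality, I repeat the same computation with $\alpha_s = A^*(s, X^*_s)$, where $X^*$ is the unique solution supplied by hypothesis~(ii) and the control is admissible by~(iii). Along this trajectory the HJB pointwise inequality becomes an equality by~(i), so every inequality in the previous step is saturated, yielding $\varphi(t, x) = J(t, x, A^*(\cdot, X^*_\cdot)) \leq U(t, x)$. Together, $\varphi(t, x) = U(t, x)$ and $A^*(\cdot, X^*_\cdot)$ realizes the maximum. The main obstacle I expect is exactly the justification of the limit $n \to \infty$, i.e.\ ruling out mass escaping to infinity and ensuring the martingale (rather than merely local martingale) property of the stochastic integral; the quadratic growth bound on $\varphi$ is tailored precisely to defeat this via the quadratic moment estimates on $X^{t, x}$, possibly in combination with a Burkholder--Davis--Gundy estimate on the bracket of the stochastic integral.
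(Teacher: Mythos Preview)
Your argument is correct and follows the standard It\^o-based verification scheme. The paper does not give its own proof of this theorem but simply refers to Theorem~3.5.2 in \cite{pham_continuous-time_2009} (and also \cite{fleming_deterministic_1975,touzi_optimal_2012,yong_stochastic_1999}), all of which carry out essentially the same localization-then-dominated-convergence argument you sketch; in particular, the remark immediately following the theorem in the paper confirms that the quadratic growth condition is there precisely to upgrade the local martingale \eqref{eq:verification1} to a true $L^1$ martingale, exactly as you identify.
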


\begin{proof}
  See, e.g., Theorem~3.5.2 in~{\cite{pham_continuous-time_2009}}. Some other
  references for the verification theorem are also Theorem~4.1
  in~{\cite{fleming_deterministic_1975}}, Theorem~5.7
  in~{\cite{touzi_optimal_2012}}, and Theorem~4.1
  in~{\cite{yong_stochastic_1999}}.
\end{proof}

\begin{remark}
  The quadratic growth condition {\eqref{eq:quadratic}} is used in Theorem
  \ref{thm:verification} only to guarantee that the local martingale part of
  the semi-martingale decomposition of $\varphi (t, X_t)$, namely, by It{\^o}
  formula,
  \begin{equation}
  	\sum_{i=1}^n \sum_{\ell = 1}^m \int_0^t \sigma^i_{\ell} (s,X_s, A^{\ast} (s, X_s))
    \partial_{x^i} \varphi (s, X_s) \mathd W^{\ell}_s,
    \label{eq:verification1}
  \end{equation}
  is in $L^1$ and a martingale (and not only a local martingale). This means
  that the statement of Theorem~\ref{thm:verification1} holds assuming only
  that~{\eqref{eq:verification1}} is a $L^1$ martingale, i.e., without
  condition~{\eqref{eq:quadratic}}.
\end{remark}

\subsection{Stochastic Hamilton--Jacobi--Bellman equation}\label{s:stoch-hjb}

The present section generalizes the aforementioned Hamilton--Jacobi--Bellman
equation to its stochastic counterpart. Let us first recall the
{\emph{It{\^o}--Kunita formula}}.

\begin{theorem}[It{\^o}--Kunita formula]
  \label{thm:ito-kunita}Let $F (t, x)$, $(t, x) \in [0, T] \times
  \mathbb{R}^n$ be a random field which is continuous in $(t, x)$ almost
  surely, such that
  \begin{enumerateroman}
    \item For every $t \in [0, T]$, $F (t, \cdot)$ is a $C^2$-map from
    $\mathbb{R}^n$ into $\mathbb{R}$, $\mathbb{P}$-a.s.,
    \item For each $x \in \mathbb{R}^n$, $F (\cdot , x)$ is a continuous
    semi-martingale $\mathbb{P}$-a.s., and it satisfies
    \[ F (t, x) = F (0, x) + \sum_{j = 1}^m \int_0^t f^j (s, x) \mathd Y^j_s,
       \qquad \text{for every } (t,x) \in [0,T]\times \mathbb{R}^n, \text{ a.s.}, \]
    where $Y^j_s$, $j = 1, \ldots, m$, are $m$ continuous semi-martingales,
    $f^j (s, x)$, $x \in \mathbb{R}^n$, $s \in [0, T]$, are random fields that
    are continuous in $(s, x)$ and satisfy the following properties:
    \begin{enumeratealpha}
      \item For every $s \in [0, T]$, $f^j (s, \cdot)$ is a $C^2$-map from
      $\mathbb{R}^n$ to $\mathbb{R}$, $\mathbb{P}$-a.s.,
      \item For every $x \in \mathbb{R}^n$, $f^j (\cdot , x)$ is an adapted
      process.
    \end{enumeratealpha}
  \end{enumerateroman}
  Let $X_t = (X^1_t, \ldots, X_t^n)$ be continuous semi-martingales Then we
  have, for $t\in [0,T]$,
  \begin{equation*}
  \begin{split} 
    F (t, X_t) = & \, F (0, X_0) + \sum_{j = 1}^m \int_0^t f^j (s, X_s) \mathd
    Y^j_s + \sum_{i = 1}^n \int_0^t \partial_{x_i} F (s, X_s) \mathd X^i_s\\
    & + \sum_{j = 1}^m \sum_{i = 1}^n \int_0^t \partial_{x_i} f^j (s, X_s)
    \mathd [ Y^j, X^i ]_s + \sum_{i, k = 1}^n \int_0^t
    \partial_{x_i x_k} F (s, X_s) \mathd [ X^i, X^k ]_s,
  \end{split} 
  \end{equation*}
  where $[ \cdot, \cdot ]_s$ stands for the quadratic variation of
  semi-martingales. Furthermore, if $F \in C^3$ and $f^j \in C^3$,
  $\mathbb{P}$-a.s., then we have, for $i=1,\ldots,n$,
  \[ \partial_{x^i} F (t, x) = \partial_{x^i} F (0, x) + \sum_{j = 1}^m
     \int_0^t \partial_{x^i} f^j (s, x) \mathd Y^j_s, \qquad \text{for every }
     (t,x) \in [0,T] \times \mathbb{R}^n, \text{ $\mathbb{P}$-a.s.} \]
\end{theorem}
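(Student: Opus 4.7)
The plan is to follow the classical Kunita approach: discretize time, decompose each increment into a pure time piece and a pure space piece, apply known semi-martingale tools to each, and then pass to the limit. Concretely, fix a partition $0 = t_0 < t_1 < \cdots < t_N = t$ of mesh going to zero and write
\[
F(t, X_t) - F(0, X_0) = \sum_{k=0}^{N-1} \bigl[ F(t_{k+1}, X_{t_{k+1}}) - F(t_k, X_{t_{k+1}}) \bigr] + \sum_{k=0}^{N-1} \bigl[ F(t_k, X_{t_{k+1}}) - F(t_k, X_{t_k}) \bigr].
\]
The first sum involves increments of $F(\cdot, y)$ at a fixed (random) endpoint $y = X_{t_{k+1}}$; by hypothesis (ii), each such increment equals $\sum_j \int_{t_k}^{t_{k+1}} f^j(s, X_{t_{k+1}}) \mathd Y^j_s$. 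The second sum involves spatial increments at the frozen time $t_k$, to which the standard Itô formula for the $C^2$ function $F(t_k, \cdot)$ applied to the semi-martingale $X$ on $[t_k, t_{k+1}]$ can be invoked.

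To identify the limit of the first sum, I would replace $f^j(s, X_{t_{k+1}})$ by $f^j(s, X_s) + \bigl[f^j(s, X_{t_{k+1}}) - f^j(s, X_s)\bigr]$. The term with $f^j(s, X_s)$ converges to $\sum_j \int_0^t f^j(s, X_s) \mathd Y^j_s$. The correction term, after a first-order Taylor expansion of $f^j$ in $x$, is approximately $\sum_{i} \partial_{x^i} f^j(s, X_s)(X^i_{t_{k+1}} - X^i_s) \mathd Y^j_s$, and combining this with the Riemann sum structure produces in the limit precisely the cross term $\sum_{i,j} \int_0^t \partial_{x^i} f^j(s, X_s) \mathd [Y^j, X^i]_s$. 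The second sum yields, as in the standard Itô formula, $\sum_i \int_0^t \partial_{x^i} F(s, X_s) \mathd X^i_s + \frac{1}{2} \sum_{i,k} \int_0^t \partial_{x^i x^k} F(s, X_s) \mathd [X^i, X^k]_s$, where one must be slightly careful that the frozen time inside $F(t_k, \cdot)$ can be replaced by $s$ without loss, which follows by uniform continuity of $s \mapsto F(s, \cdot)$ in $C^2_{\mathrm{loc}}$ and a standard localization.

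The main technical obstacle is precisely this passage to the limit: the integrands are themselves random fields, so one needs estimates uniform in a spatial compact set to interchange limits with stochastic integrals. I would handle this by localization, stopping $X$ at the first exit from a ball of radius $R$ and using the assumed continuity of $(s,x) \mapsto f^j(s,x)$, $\partial_{x^i} f^j(s,x)$, $\partial_{x^i x^k} F(s,x)$ together with the Burkholder-Davis-Gundy inequality to bound the error sums in $L^1$. The cross-variation term is the genuinely new feature with respect to the classical Itô formula, and its appearance is the conceptual heart of the argument.

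For the final identity concerning $\partial_{x^i} F(t, x)$, the strategy is to commute spatial differentiation with the stochastic integral $\int_0^t f^j(s, x) \mathd Y^j_s$. Under the $C^3$ hypothesis on $F$ and $f^j$, this can be justified by applying Kolmogorov's continuity theorem to the parameter-dependent stochastic integrals, so that a version exists whose derivatives with respect to $x$ coincide almost surely with $\sum_j \int_0^t \partial_{x^i} f^j(s, x) \mathd Y^j_s$, and then invoking uniqueness of the semi-martingale decomposition of $F(t, x) - F(0, x)$ in $t$ for each fixed $x$ to conclude.
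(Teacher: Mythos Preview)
The paper does not actually prove this theorem: its ``proof'' consists solely of a reference to Kunita's original article and monograph. Your sketch is a faithful outline of the standard argument found in those references (time discretization, splitting each increment into a frozen-space and a frozen-time piece, Taylor expansion of the frozen-space piece to extract the cross-variation term, and localization plus BDG to pass to the limit), so there is nothing to compare beyond noting that you have supplied what the paper delegates to the literature.

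One minor point: in your second-sum limit you correctly write the $\tfrac{1}{2}$ in front of the second-order term coming from the ordinary It\^o formula, whereas the statement as recorded in the paper omits this factor; this is a typographical slip in the paper's statement, not an error in your argument.
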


\begin{proof}
  See, e.g., the article~{\cite{kunita_some_1981}} or the
  book~{\cite{kunita_stochastic_1990}}, both by H.~Kunita.
\end{proof}

Sticking, where possible, with the notation introduced in
Section~\ref{s:det-hjb}, we consider a stochastic optimal control problem
where also the functions $L$, $g$, $\mu$ and $\sigma$ are random. 
More precisely, they depend also on $\omega \in \Omega$ in a predictable way, namely, $L(t,x,a,\cdot), g(x,\cdot), \mu(t,x,\cdot), \sigma(t,x,\cdot)$ are $\mathcal{F}_t$-measurable, for any $(t,x,a) \in \mathbb{R}_+\times \mathbb{R}^n \times K$.  
In order to distinguish them from the functions in the previous section and recall that the following are stochastic terms, we write also
\[ L^S(t,x,a) = L(t,x,a,\cdot), \qquad g^S(x) = g(x,\cdot). \]
We want then to maximize the objective functional
\begin{equation}
  \mathbb{E} \left[ \int_{t_0}^T L^S (t,X_t, \alpha_t) \mathd t + g^S (X_T) \right], \label{eq:cost-stoch}
\end{equation}
where $X$ solves the SDE
\begin{equation}
  \begin{cases}
    \mathd X_t = \mu (t,X_t, \alpha_t, \omega) \mathd t + \sigma (t,X_t, \alpha_t, \omega)
    \mathd W_t, & t_0 < t < T,\\
    X_{t_0} = x. & 
  \end{cases} \label{eq:SDE-stoch}
\end{equation}
and $\alpha \in \mathcal{K}_{L}$.

Let us introduce, in a completely analogous way as in the previous section,
the {\emph{value function}}
\[ U (t, x , \omega) = \max_{\alpha \in \mathcal{K}_{L}} \mathbb{E} \left[ 
   \int_t^T L^S (s,X_s, \alpha_s) \mathd s + g^S (X_T) \, \bigg\vert \, \mathcal{F}_t \right] . \]
From now on, we may omit the explicit dependence on $\omega\in \Omega$ of the functions.
Then, for any fixed $x$, $U (t, x)$ is an $\mathcal{F}_t$-adapted process but,
a priori, it is not of bounded variation. 
We can anyway expect that it is a
continuous semi-martingale, and therefore, by the representation theorem for
semi-martingales and martingales (see, e.g., Section~IV.31 and Section~IV.36
in~{\cite{rogers_diffusions_2000}}), that it can be written as follows,
\begin{equation*}
  U (t, x) = \Gamma_T (x) - \Gamma_t (x) - \int_t^T Y_s (x) \mathd W_s, \qquad
  x \in \mathbb{R}^n, 0 \leqslant t \leqslant T, 
\end{equation*}
where, for every $x \in \mathbb{R}^n$, $\Gamma_t (x)$ and $Y_t (x)$ are
$\mathcal{F}_t$-adapted processes and $\Gamma_t (x)$ is of bounded variations.
In this case, if $\Gamma_t (x)$ and $Y_t (x)$ are almost surely continuous in
$(t, x)$, $\Gamma_t (x)$ is differentiable with respect to $t$, and both of
them are sufficiently smooth with respect to~$x$, then the pair $(U, Y)$
should satisfy a {\emph{stochastic Hamilton--Jacobi--Bellman equation}}~(SHJB).

More precisely, we say that $(\varphi, \Psi)$ solves the SHJB related with
the optimal control problem~{\eqref{eq:cost-stoch}} and~{\eqref{eq:SDE-stoch}}
if $(\varphi, \Psi)$ satisfies the following backward stochastic partial
differential equation
\begin{equation}
  \begin{cases}
    \displaystyle{\mathd \varphi_t (x) + H^S (r,x, \nabla \varphi, D^2 \varphi, \nabla \Psi) \mathd t = \sum_{\ell = 1}^m
    \Psi^{\ell}_t (x) \mathd W^{\ell}_t,} & (t, x) \in [t_0, T) \times
    \mathbb{R}^n,\\
    \varphi_T (x) = g (x), & x \in \mathbb{R}^n,
  \end{cases} \label{eq:stoch-hjb}
\end{equation}
where
\begin{equation*}
  H^S (t,x, u_{x}, u_{x x}, \psi_{x}) \equiv  H (t,x, u_{x}, u_{x x}, \psi_{x}, \omega) = \sup_{a \in
  K} \mathcal{H}^S (t,x, u_{x}, u_{x x}, a, \psi_{x}),
\end{equation*}
and
\begin{equation*} 
\begin{split}
  \mathcal{H}^S (t,x, u_{x}, u_{x x}, a, \psi_{x}) = & \, \sum_{i=1}^n \mu^i (t,x, a,\omega) u_{x^i} + \frac{1}{2} \sum_{i,j=1}^n\sum_{\ell = 1}^m \sigma^i_{\ell} (t,x, a,
  \omega) \sigma^j_{\ell} (t,x, a, \omega) u_{x^i x^j} \\
  & + \sum_{i=1}^n \sum_{\ell = 1}^m \sigma_{\ell}^i (t,x, a, \omega)
  \psi^{\ell}_{x^i} + L^S (t,x,a), 
\end{split}
\end{equation*}
with $\psi_x = (\psi^\ell_{x^i})_{i=1,\ldots,n,\ell=1,\ldots,m}\in \mathbb{R}^{n\times m}$. 
See, e.g., Section~3.1 in~{\cite{peng_stochastic_1992}} for more details about
the derivation of equation~{\eqref{eq:stoch-hjb}} and Section~4 in the same
reference for results concerning the well-posedness of such an equation.

We state here the {\emph{verification theorem}}, which tells us that a sufficiently smooth
solution of the SHJB equation coincides with the value function~$v$.

\begin{theorem}
  \label{thm:verification}Let $(\varphi, \Psi)$ be a smooth solution of the
  SHJB equation~{\eqref{eq:stoch-hjb}} with $t_0 = 0$ and assume that the following
  conditions hold:
  \begin{enumerateroman}
    \item For each $t \in [0,T]$, $x\mapsto (\varphi_t (x), \Psi_t (x))$ is a $C^2$-map from
    $\mathbb{R}^n$ into $\mathbb{R} \times \mathbb{R}^m$, $\mathbb{P}$-a.s.,
    
    \item For each $x \in \mathbb{R}^n$, $t \mapsto (\varphi_t (x), \Psi_t (x))$ and $t \mapsto (\nabla \varphi_t (x),
    D^2 \varphi_t (x), \nabla \Psi_t (x))$ are continuous $\mathcal{F}_t$-adapted
    processes.
  \end{enumerateroman}
  Suppose further that there exists a predictable admissible control $A^\ast (t,x,\omega)$ such that
  \[ H^S (t,x, \nabla\varphi, D^2\varphi, \nabla \Psi) = \mathcal{H}^S
     (t,x, \nabla \varphi, D^2 \varphi, A^\ast(t,x,\omega), \nabla \Psi), \]
  and that it is regular enough so that the SDE~{\eqref{eq:SDE-stoch}} is
  well-posed with solution~$X$. Then $(\varphi,
  \Psi) = (V, Y)$ and moreover, for any initial data $(0, x)$ with $x \in \mathbb{R}^n$, $A^{\ast}  (t,X_t, \omega)$ maximizes the objective function~$U$.
\end{theorem}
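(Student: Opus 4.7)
The approach is to follow the structure of the classical verification theorem (Theorem~\ref{thm:verification1}), replacing the It\^o formula by the It\^o--Kunita formula (Theorem~\ref{thm:ito-kunita}) to handle the fact that here $\varphi$ is itself a random field driven by the same Brownian motion $W$ as the state~$X$. Concretely, fix an admissible control $\alpha \in \mathcal{K}_L$, let $X$ be the corresponding solution of~\eqref{eq:SDE-stoch} with $X_t=x$, and apply Theorem~\ref{thm:ito-kunita} to $\varphi_s(X_s)$ with $Y^\ell = W^\ell$ and $f^\ell = \Psi^\ell$; hypotheses (i) and (ii) on $(\varphi,\Psi)$ are tailored precisely to make this legitimate. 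Using the SHJB equation~\eqref{eq:stoch-hjb} for the $\mathd \varphi_s(x)$ term, together with $\mathd[W^\ell, X^i]_s = \sigma^i_\ell(s,X_s,\alpha_s)\mathd s$ and $\mathd[X^i, X^j]_s = \eta^{ij}(s,X_s,\alpha_s)\mathd s$, the drift of $\varphi_s(X_s)$ assembles, by direct comparison with the definition of $\mathcal{H}^S$, into
\[ -H^S(s,X_s,\nabla\varphi, D^2\varphi, \nabla\Psi) + \mathcal{H}^S(s,X_s,\nabla\varphi, D^2\varphi, \alpha_s, \nabla\Psi) - L^S(s,X_s,\alpha_s), \]
while the martingale part is $\mathd M_s = \sum_{\ell}\bigl(\Psi^\ell_s(X_s) + \sum_i \sigma^i_\ell(s,X_s,\alpha_s)\partial_{x^i}\varphi_s(X_s)\bigr)\mathd W^\ell_s$.

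Since $H^S = \sup_a \mathcal{H}^S(\cdot,a,\cdot) \geq \mathcal{H}^S(\alpha_s)$, the drift above is bounded by $-L^S(s,X_s,\alpha_s)$, with equality whenever $\alpha_s$ attains the pointwise supremum, in particular for $\alpha_s = A^*(s,X_s,\omega)$. Integrating from $t$ to $T$, using the terminal condition $\varphi_T(X_T) = g^S(X_T)$, and rearranging yields
\[ \varphi_t(x) \geq g^S(X_T) + \int_t^T L^S(s,X_s,\alpha_s)\mathd s - (M_T - M_t), \]
with equality for $\alpha=A^*$. Taking $\mathcal{F}_t$-conditional expectation and using $\mathbb{E}[M_T - M_t \mid \mathcal{F}_t] = 0$ then gives $\varphi_t(x) \geq J(t,x,\alpha)$ for every $\alpha \in \mathcal{K}_L$ and $\varphi_t(x) = J(t,x,A^*(\cdot,X^*_\cdot))$ for the candidate optimal control. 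This identifies $\varphi$ with the value function, and then uniqueness of the semi-martingale decomposition of $U$ (combined with the equation~\eqref{eq:stoch-hjb}) forces $\Psi = Y$, while the equality case certifies the optimality of $A^*$.

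The main technical obstacle is exactly the one flagged in the remark after Theorem~\ref{thm:verification1}: the process $M$ produced by It\^o--Kunita is a priori only a local martingale, so one must upgrade it to a genuine martingale on $[t,T]$ before taking conditional expectation. This is where the unspecified ``smooth enough'' clause in the hypotheses turns into a quantitative integrability demand on $\Psi$, on $\nabla \varphi$ and on $\sigma(\cdot,\cdot,\alpha)$ along the controlled trajectory, either via a quadratic-growth condition analogous to~\eqref{eq:quadratic} (combined with~\eqref{eq:cond-mu-sigma-ctrl} for admissible controls) or via a standard localisation argument along a sequence of stopping times followed by a dominated convergence step controlled by the growth of $g^S$ and $L^S$.
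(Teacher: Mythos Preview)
The paper does not give its own proof of this theorem: its entire proof environment is the single line ``See Section~3.2 in~\cite{peng_stochastic_1992}.'' Your sketch is correct and is precisely the argument that reference carries out, namely applying the It\^o--Kunita formula to $\varphi_s(X_s)$, recognising the resulting drift as $-H^S+\mathcal{H}^S(\alpha_s)-L^S$, using the inequality $H^S\geq\mathcal{H}^S(\alpha_s)$ (with equality at $A^\ast$), and then taking conditional expectations after handling the local-martingale issue. So there is nothing to compare: you have supplied the proof the paper only cites.
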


\begin{proof}
  See Section~3.2 in~{\cite{peng_stochastic_1992}}.
\end{proof}

\begin{remark}
  Under suitable regularity conditions on $\mu$, $\sigma$, $L^S$ and $g^S$, it is
  possible to prove that the SHJB equation~{\eqref{eq:stoch-hjb}} admits a
  unique solution satisfying the hypotheses of Theorem~\ref{thm:verification}.
  A rigorous proof of this fact can be found in~Section~4
  of~{\cite{peng_stochastic_1992}}. For further developments on SHJB equations
  and the related stochastic optimal control problems we refer the reader to,
  e.g.,~{\cite{buckdahn_pathwise_2007,chang_optimal_2009,englezos_utility_2009,qiu_viscosity_2018}},
  as well as the already mentioned paper by
  Peng~{\cite{peng_stochastic_1992}}.
\end{remark}

\section{Solutions of PDEs via contact symmetries}\label{s:sym}

In this section, we recall some basic facts concerning the theory of symmetries on which our results are based, referring to~{\cite{devecchi_geometry_2020,gaeta_symmetries_1994,hydon_symmetry_2000,olver_applications_1993,stephani_differential_1989}} for a complete treatment of these topics. 
We start with a formal introduction on jet spaces (for an extended introduction to the subject see, e.g., \cite{bocharov_symmetries_1999,Saundersbook}), and then proceed with contact symmetries and their applications in solving~PDEs. 
Despite the fact that these results are well-known, we insert here a small survey for the ease of the reader, as well as we introduce the notation that will be adopted in the rest of the paper.

\subsection{Jet spaces and jet bundles}

The jet space is a generalization of the notion of tangent bundle of a
manifold. 
Let $M$ and $N$ be two open subsets of $\mathbb{R}^m$ and
$\mathbb{R}^n$, respectively, and consider a smooth function $f \colon M
\rightarrow N$. 
Take a standard coordinate system $x = (x^1, \ldots, x^m)$ in
$M$ and let $u = (u^1, \ldots, u^n) = f (x) \in N$. 
We can then consider the {\emph{$k$-th prolongation}} $u^{(k)} = \tmop{pr}^{(k)} f (x)$, that is defined by the relations $u^j_{x^i} = \partial_{x^i} f^j (x), u^j_{x^i x^l} = \partial_{x^i x^l} f^j (x), \ldots$, up to order~$k$.
For example, if $m = 2$ and $n = 1$, then $\tmop{pr}^{(2)} f (x^1,
x^2)$ is given~by
\[ (u ; u_{x^1}, u_{x^2} ; u_{x^1 x^1}, u_{x^1 x^2}, u_{x^2 x^2}) = (f ; \partial_{x^1} f, \partial_{x^2} f ; \partial_{x^1 x^1} f, \partial_{x^1 x^2} f, \partial_{x^2 x^2} f) (x^1, x^2) . \]
The $k$-th prolongation can also be looked at as the Taylor polynomial of
degree $k$ for $f$ at the point~$x$. 
The space whose coordinates represent the
independent variables, the dependent variables and the derivatives of the
dependent variables up to order $k$ is called the {\emph{$k$-th order jet
space}} of the underlying space $N \times M$, and we denote it by $J^k (M,
N)$. It is a smooth vector bundle on $M$ with projection $\pi_{k, - 1} \colon J^k
(M, N) \rightarrow M$ given~by
\[ \pi_{k, - 1} (x, u, u_{x}, u_{x x}, \ldots) = x. \]
More explicitly, $J^k (M, N) = M \times N \times N_{(1)} \times \cdots \times
N_{(k)}$, where $N_{(i)}$, is the space of $i$-th order derivatives of $u$
with respect to~$x$. 
It is clear that $N_{(i)} \subseteq \mathbb{R}^{n_i}$
with
\[ n_i = \left( \begin{array}{c}
     m + i - 1\\
     i
   \end{array} \right) . \]

To any function $f \in C^k (M, N)$, where $C^k (M, N)$ is the
infinite-dimensional Fr{\'e}chet space of $k$ times differentiable functions
on $M$ taking values in $N$, we associate a continuous section of the bundle
$(J^k (M, N), M, \pi_{k, - 1})$ in the following way
\[ f \mapsto \boldsymbol{D}^k (f) (x) = (x, u = f (x), u_{x} =
   \nabla f (x), u_{x x} = D^2 f(x), \ldots, D^k f(x)), \]
where $D^i f(x)$ is the vector collecting all the $i$-th order derivatives of $f$ with respect to~$x$.
In this setting, a differential equation is a sub-manifold $\Delta_\mathcal{E} \subset
J^k (M, N)$. For example, in the scalar case $N =\mathbb{R}$, we usually
consider $\Delta_\mathcal{E}$ as the null set of some regular functions, i.e.,~$\Delta_\mathcal{E} =
\{ E^i (x, u, u_{x},u_{xx},\ldots) = 0, i \in \{ 1, \ldots, p \} \}$.

\begin{definition}
 Consider a (finite) set $E^i\colon J^k(M,N) \rightarrow \mathbb{R}$, for $i=1,...,p$ where $p\in \mathbb{N}$ and $p>0$, of smooth functions defining a sub-manifold $\Delta_\mathcal{E} =\{ E^i (x, u, u_{x},u_{xx},\ldots) = 0, i \in \{ 1, \ldots, p \} \}$ of $J^k(M,N)$. We say that a smooth function $f \colon M \rightarrow N$ is a {\emph{solution to
  the equation}} $\mathcal{E}$ (represented by the sub-manifold $\Delta_{\mathcal{E}}$) if, for any $x \in M$, we have $\boldsymbol{D}^k
  f (x) \in \Delta_{\mathcal{E}}$. The set of all solutions to equation
  $\mathcal{E}$ will be denoted by~$\mathcal{S}_{\mathcal{E}}$.
\end{definition}

For instance, in the previous case where $N =\mathbb{R}$ and $\Delta_\mathcal{E} = \{
E^i (x, u, u_{x}, \ldots) = 0, i \in \{ 1, \ldots, p \} \}$, $f$ is a solution to
equation $\mathcal{E}$ if $E^i (x, f (x), \nabla f (x), \ldots)
= 0$, for every $i = 1, \ldots, p$, $x \in M$.

\begin{remark}\label{remark:nondegenerate}
For technical reasons, it is usually not possible to consider generic equations $\mathcal{E}$ (corresponding to generic sub-manifold $\Delta_{\mathcal{E}} \subset J^k(M,N)$). 
In the following, we always consider \emph{non-degenerate systems of differential equations} in the sense of Definition~2.70 in~\cite{olver_applications_1993}. 
This condition assures that, for each fixed $x_0 \in M$ and each set of derivatives $(u^0,u^0_x,u^0_{xx},\ldots)$, there exists a solution to the equation defined in a neighborhood of $x_0$ with the prescribed derivatives $(u^0,u^0_x,u^0_{xx},\ldots)$ at the point $x_0$. 
Since the precise formulation of this condition is quite technical and the evolution equations considered in Section~\ref{s:noether} always satisfy such an assumption, we refer to Section~2.6 of~\cite{olver_applications_1993} for complete details.
\end{remark}

\subsection{Contact transformations}

We want to introduce a class of transformations induced by diffeomorphisms of $J^k(M,N)$. 
For simplicity, we consider the case $k=2$, $M\subset \mathbb{R}^n$ and $N=\mathbb{R}$. 
Consider a diffeomorphism $\Phi \colon J^2(M,N)\rightarrow J^2(M,N)$ given by the following relations
\begin{align*}
\tilde{x}^i&=\Phi^{x^i}(x,u,u_{x},u_{xx}), \\
\tilde{u}&=\Phi^{u}(x,u,u_{x},u_{xx}), \\
\tilde{u}_{x^i}&=\Phi^{u_{x^i}}(x,u,u_{x},u_{xx}), \\
\tilde{u}_{x^ix^j}&=\Phi
^{u_{x^ix^j}}(x,u,u_{x},u_{xx}).
\end{align*}
Hereafter, we use the notation $\Phi^x=(\Phi^{x^1},\cdots,\Phi^{x^n})$, $\Phi^{u_x}=(\Phi^{u_{x^1}},\cdots,\Phi^{u_{x^n}})$ and $\Phi^{u_{xx}}=(\Phi^{u_{x^ix^j}})|_{i,j=1,...,n}$.

We now aim to define a transformation $F_{\Phi}$ on the space of smooth functions induced by the map $\Phi$ on the jet space. 
Let $U\in C^{\infty}(M,N)$ and consider the map $C_{U,\Phi} \colon M \rightarrow M$ given~by
\[ C_{U,\Phi}(x)=\Phi^x(x,U(x),\nabla U(x), D^2 U(x)). \]
Let also $\mathcal{F}_{\Phi} \subset C^{\infty} (M, N)$ be the subset of
smooth functions $U \in C^{\infty} (M, N)$ such that $C_{U,
\Phi}$ is a diffeomorphism from $M$ into itself.

\begin{definition}\label{definition:FPhi}
We say that the diffeomorphism $\Phi$ generates the (nonlinear) operator $F_{\Phi}$ on the space of functions $\mathcal{F}_{\Phi}$, if there is a map $F_{\Phi} \colon \mathcal{F}_{\Phi} \rightarrow C^{\infty}(M,N)$ such that 
\begin{align*}
F_{\Phi}(U)(x) & = \Phi^u(C_{U, \Phi}^{- 1} (x), U (C_{U, \Phi}^{- 1}
     (x)), \nabla U (C_{U, \Phi}^{- 1} (x)),D^2 U (C_{U, \Phi}^{- 1} (x)) ),\\
\partial_{x^i}F_{\Phi}(U)(x) & = \Phi^{u_{x^i}}(C_{U, \Phi}^{- 1} (x), U (C_{U, \Phi}^{- 1}
     (x)), \nabla U (C_{U, \Phi}^{- 1} (x)),D^2 U (C_{U, \Phi}^{- 1} (x)) ),\\
\partial_{x^ix^j}F_{\Phi}(U)(x) & = \Phi^{u_{x^ix^j}}(C_{U, \Phi}^{- 1} (x), U (C_{U, \Phi}^{- 1}
     (x)), \nabla U (C_{U, \Phi}^{- 1} (x)),D^2 U (C_{U, \Phi}^{- 1} (x)) ).
\end{align*}
\end{definition}

Not every diffeomorphism $\Phi \colon J^2(M,N)\rightarrow J^2(M,N)$ generates an operator $F_{\Phi}$ on the space of functions~$\mathcal{F}_{\Phi}$. 
For example, consider $M=\mathbb{R}$ and the map $\Phi^x(x,u,u_x,u_{xx})=\lambda x$, $\Phi^u(x,u,u_x,u_{xx})=u$, $\Phi^{u_x}(x,u,u_x,u_{xx})=u_{x}$, and $\Phi^{u_{xx}}(x,u,u_x,u_{xx})=u_{xx}$, where $\lambda>0$. 
In this case, for any $U\in C^{\infty}(M,N)$, the map $C_{U,\Phi}$ is given by $C_{U,\Phi}(x)=\lambda x$ and, thus, it does not depend on $U$ and it is always a diffeomorphism from $\mathbb{R}$ into itself, since $\lambda \ne 0$. 
This implies that $\mathcal{F}_{\Phi}=C^{\infty}(M,N)$ and also that, if the map $F_{\Phi}$ exists, then it must satisfy 
\[ F_{\Phi}(U)=U(\lambda^{-1} x), \]
for any $U\in C^{\infty}(M,N)$. On the other hand, we have 
\[ \partial_{x}F_{\Phi}(U)=\lambda^{-1} U'(\lambda^{-1} x) \not= U'(\lambda^{-1} x) = \Phi^{u_{x}}(C_{U, \Phi}^{- 1} (x), U (C_{U, \Phi}^{- 1}
     (x)), \nabla U (C_{U, \Phi}^{- 1} (x)),D^2 U (C_{U, \Phi}^{- 1} (x)) ). \]
This simple counterexample shows that a diffeomorphism $\Phi \colon J^2(M,N)\rightarrow J^2(M,N)$ must satisfy some additional conditions in order to generate an operator $F_{\Phi}$. 
For this reason, we introduce the following definition.

\begin{definition}\label{definition:contact}
	A diffeomorphism $\Phi \colon J^2(M,N)\rightarrow J^2(M,N)$ is said to be a \emph{contact transformation} if it generates a (nonlinear) operator $F_{\Phi}$ in the sense of Definition~\ref{definition:FPhi}.
\end{definition}

It is possible to give a nice geometric characterization of the set of contact transformations. 
From now on, we write $\Lambda^1 J^n(M,N)$ for the vector space of $1$-forms on $J^n(M,N)$. In particular, consider the following $1$-forms,
\begin{align}
\kappa=&\mathd u-\sum_{i=1}^n u_{x^i}\mathd x^i,\label{eq:kappa}\\
\kappa_{x^i}=&\mathd u_{x^i}-\sum_{j=1}^n u_{x^ix^j}\mathd x^j. \nonumber
\end{align}
We denote by $\mathfrak{C} \subset \Lambda^1J^2(M,N)$ the \emph{contact structure}, also called Cartan distribution in~\cite{bocharov_symmetries_1999}, which is generated~by
\[\mathfrak{C}=\operatorname{span}\{\kappa,\kappa_{x^i}, i=1,...,n\}.\] 

\begin{theorem}  \label{thm:loc-diffeo-contact}
	A diffeomorphism $\Phi \colon J^2(M,N) \rightarrow J^2(M,N)$ is a contact transformation in the sense of Definition~\ref{definition:contact} if and only if it preserves the contact structure $\mathfrak{C}$, that~is,
	\[\Phi^*(\mathfrak{C})=\mathfrak{C},\]
	where $\Phi^*$ is the pull-back of differential forms on $J^2(M,N)$ induced by~$\Phi$.
\end{theorem}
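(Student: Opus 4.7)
The plan is to prove the equivalence via the well-known characterization of prolongation graphs as the integral submanifolds of the contact distribution. The key preliminary fact I would isolate is the following: an $n$-dimensional submanifold $S \subset J^2(M,N)$ that is a section of $\pi_{2,-1}$ (so it projects diffeomorphically to an open subset of $M$) is of the form $S = \boldsymbol{D}^2 V(\pi_{2,-1}(S))$ for a unique smooth $V \colon \pi_{2,-1}(S) \to N$ \emph{if and only if} every form in $\mathfrak{C}$ pulls back to zero on $S$. The ``only if'' half is immediate from direct computation: pulling back $\kappa$ and $\kappa_{x^i}$ along $x \mapsto \boldsymbol{D}^2 V(x)$ gives $\mathd V - \sum \partial_{x^i} V\, \mathd x^i = 0$ and $\mathd(\partial_{x^i} V) - \sum \partial_{x^i x^j} V \, \mathd x^j = 0$. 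For the ``if'' direction, write $S$ locally as $x \mapsto (x,v(x),p(x),q(x))$; the vanishing of $\kappa$ forces $p_i = \partial_{x^i} v$, while the vanishing of $\kappa_{x^i}$ forces $q_{ij} = \partial_{x^j} p_i = \partial_{x^i x^j} v$, so $v$ supplies the desired~$V$.

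For the implication ``$\Phi^\ast(\mathfrak{C}) = \mathfrak{C}\Rightarrow \Phi$ is a contact transformation,'' fix $U \in \mathcal{F}_\Phi$ and consider $S_U = \boldsymbol{D}^2 U(M)$. Then $S_U$ is a section on which $\mathfrak{C}$ vanishes, and $\Phi(S_U)$ is again $n$-dimensional with $\mathfrak{C}$ vanishing on it by $\Phi^\ast(\mathfrak{C})=\mathfrak{C}$; moreover its base projection is exactly $C_{U,\Phi}(M)$, which is an open subset of $M$ by the assumption $U \in \mathcal{F}_\Phi$. By the preliminary fact, $\Phi(S_U) = \boldsymbol{D}^2 V(C_{U,\Phi}(M))$ for some smooth $V$. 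Setting $F_\Phi(U) \assign V$ and reading off the three coordinate components (base, value, first derivatives, second derivatives) of the equation $\Phi \circ \boldsymbol{D}^2 U = \boldsymbol{D}^2 V \circ C_{U,\Phi}$ reproduces the three identities in Definition~\ref{definition:FPhi}.

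For the converse, I would show that $\mathrm{d}\Phi_{p}$ maps the annihilator $\mathfrak{C}^{\perp}|_{p} \subset T_p J^2(M,N)$ into $\mathfrak{C}^{\perp}|_{\Phi(p)}$ at every point $p$; since both have the same dimension $n + n(n+1)/2$ and $\Phi$ is a diffeomorphism, this forces equality and dually gives $\Phi^\ast(\mathfrak{C}) = \mathfrak{C}$. The mechanism is that $\mathfrak{C}^{\perp}|_{p}$ is spanned by tangent vectors to graphs $S_U = \boldsymbol{D}^2 U(M)$ with $\boldsymbol{D}^2 U(\pi_{2,-1}(p)) = p$: the coordinates $(\partial_{x^i} U, \partial_{x^ix^j} U)$ at $\pi_{2,-1}(p)$ are prescribed by $p$, while $\partial_{x^ix^jx^k} U$ remains free, and varying the third derivatives sweeps out the $\partial_{u_{x^jx^k}}$ directions needed to fill the annihilator. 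If $U$ lies in $\mathcal{F}_\Phi$, then by hypothesis $\Phi(S_U)$ is the graph of $\boldsymbol{D}^2 F_\Phi(U)$ over its base, so $\mathrm{d}\Phi_{p}(T_p S_U) \subset \mathfrak{C}^{\perp}|_{\Phi(p)}$.

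The main obstacle is ensuring that enough of these test functions $U$ actually lie in $\mathcal{F}_\Phi$ — that is, that $C_{U,\Phi}$ is a local diffeomorphism at $\pi_{2,-1}(p)$. I would handle this by choosing $U$ as a polynomial realizing the prescribed $2$-jet at $\pi_{2,-1}(p)$ and noting that invertibility of the Jacobian of $C_{U,\Phi}$ at $\pi_{2,-1}(p)$ is an open condition in the remaining free parameters (the third-order Taylor coefficients of $U$). On the dense open set of points where this Jacobian can be made invertible, the argument above yields $\Phi^\ast(\mathfrak{C}) = \mathfrak{C}$ pointwise, and by continuity it extends to all of $J^2(M,N)$. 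This openness/density argument is where the technical content sits, everything else being linear algebra and the preliminary characterization.
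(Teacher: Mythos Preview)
The paper supplies no proof of its own here: it simply refers the reader to Chapter~2 of~\cite{bocharov_symmetries_1999}, Section~4 of~\cite{devecchi_geometry_2020}, and Chapter~21 of~\cite{stephani_differential_1989}. Your proposal is therefore not competing with an argument in the paper but rather reconstructing what those references do, and it does so along the standard lines --- $2$-jet graphs are exactly the sections of $\pi_{2,-1}$ on which the contact forms vanish, and a diffeomorphism preserves this class of submanifolds if and only if it preserves~$\mathfrak{C}$.

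The forward implication is clean and correct as written. For the converse you have correctly located the only nontrivial step in the openness/density claim, and the overall strategy (span $\mathfrak{C}^\perp|_p$ by tangent spaces $T_pS_U$ and use that $\Phi$ carries each $S_U$ to another jet graph) is sound. Two places deserve one more sentence each. First, the paper's Definition~\ref{definition:FPhi} requires $C_{U,\Phi}$ to be a \emph{global} diffeomorphism of $M$, whereas your polynomial test functions only give local invertibility at $x_0$; since $\Phi^*(\mathfrak{C})=\mathfrak{C}$ is a pointwise condition, this is harmless once you shrink $M$ to a neighbourhood of $x_0$, but it should be said. Second, you assert but do not argue that the set of points $p\in J^2(M,N)$ admitting \emph{some} choice of third derivatives with nonsingular Jacobian is dense (openness is clear). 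This does follow --- if the Jacobian determinant, as a polynomial in the free third-order coefficients, vanished identically on an open set of $J^2$, one shows that $d\Phi$ would fail to be injective there --- but that deduction is not automatic and is precisely the lemma the cited references provide. With those two remarks made explicit your argument is complete.
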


\begin{proof}
  See, e.g., Chapter~2 in~\cite{bocharov_symmetries_1999}, Section~4 in~\cite{devecchi_geometry_2020}, Chapter~21 in~\cite{stephani_differential_1989}, and the references therein.
\end{proof}

\begin{remark}\label{remark:contacthistory1}
  The contact transformation $\Phi$ is uniquely determined by its action on
  $J^1 (M, N)$. 
  In particular, $\Phi^{x},\Phi^{u}$ and $\Phi^{u_x}$ depend only on $(x,u,u_{x})$ and they do not depend on $u_{xx}$ (see, e.g., Chapter~2 in~\cite{bocharov_symmetries_1999}). 
\end{remark}
  
\begin{remark}
	In contact geometry, a contact structure on a $(2n+1)$-dimensional manifold $\mathcal{M}$ is a $1$-form $\zeta$ which is maximally non-integrable, namely, $\zeta \wedge (\mathd \zeta)^n \not=0$, and the contact transformations are the diffeomorphisms $\Psi$ of $\mathcal{M}$ such that $\Psi^*(\zeta)=f \cdot \zeta$, for some $f\in C^{\infty}(\mathcal{M},\mathbb{R})$ (see, e.g.,~\cite{arnold_geometrical_1988,Giegesbook} for an introduction to the subject and~\cite{Geiges2001} for an historical overview). 
	This definition is satisfied by $J^1(M,\mathbb{R})$ with the $1$-form $\zeta=\kappa$ defined in~\eqref{eq:kappa}.\\
	In the study of the geometry of jet spaces (see, e.g.,~Chapter~6~of~\cite{Saundersbook}), the term ``contact structure'' is often used to express the set of forms~$\mathfrak{C}$. 
	This custom is due to the fact that, as explained in Remark~\ref{remark:contacthistory1}, the contact transformations are extensions of diffeomorphisms on $J^1(M,N)$, i.e., the set of transformations considered here is in one-to-one correspondence with the one usually considered in contact geometry.
\end{remark}

In the following, we will not consider just a single contact transformation but \emph{one parameter groups of contact transformations} $\Phi_{\lambda}$, which means that $\Phi_{\cdot} \colon \mathbb{R} \times J^2(M,N) \rightarrow J^2(M,N)$ is $C^{\infty}$, $\Phi_{\lambda}$ is a contact transformation for each $\lambda \in \mathbb{R}$, $\Phi_{0}(x,u,u_{x},u_{xx})=(x,u,u_{x},u_{xx})$, and, for each $\lambda_1,\lambda_2\in \mathbb{R}$,
\[\Phi_{\lambda_1} \circ \Phi_{\lambda_2}=\Phi_{\lambda_1+\lambda_2}.\]
In general, a one parameter group of diffeomorphisms $\Phi_{Y,\lambda}$, where $\lambda\in \mathbb{R}$, is generated by a vector field $Y \in TJ^2(M,N)$, i.e., belonging to the tangent bundle of $J^2(M,N)$, which in local coordinates has the expression
\begin{equation}
	\begin{split} 
	Y = \, & \sum_{i=1}^n Y^{x^i} (x, u, u_{x},u_{xx}) \partial_{x^i} + Y^u (x, u, u_{x},u_{xx})\partial_u \\ 
	& + \sum_{i=1}^n Y^{u_{x^i}} (x, u, u_{x},u_{xx}) \partial_{u_{x^i}}
	+ \sum_{i,j=1}^nY^{u_{x^ix^j}} (x, u, u_{x},u_{xx})\partial_{u_{x^ix^j}}, 
	\end{split} 
	\label{eq:Y1}
\end{equation} 
by the following relations
\begin{align}
\nonumber \partial_{\lambda}\Phi^{x^i}_{Y,\lambda}(x,u,u_{x},u_{xx})& = Y^{x^i}\circ \Phi_{\lambda}(x,u,u_{x},u_{xx}), \\
\nonumber \partial_{\lambda}\Phi^{u}_{Y,\lambda}(x,u,u_{x},u_{xx})& =Y^{u}\circ \Phi_{\lambda}(x,u,u_{x},u_{xx}), \\
\nonumber\partial_{\lambda}\Phi^{u_{x^i}}_{Y,\lambda}(x,u,u_{x},u_{xx})& =Y^{u_{x^i}}\circ \Phi_{\lambda}(x,u,u_{x},u_{xx}), \\
\partial_{\lambda}\Phi^{u_{x^ix^j}}_{Y,\lambda}(x,u,u_{x},u_{xx})&=Y^{u_{x^ix^j}}\circ \Phi_{\lambda}(x,u,u_{x},u_{xx}),\label{eq:Y2}
\end{align}
for any $\lambda\in\mathbb{R}$ and $(x,u,u_{x},u_{xx})\in J^2(M,N)$.
It is useful to introduce the following natural notion.

\begin{definition}\label{definition:infcontact}
A vector field $Y$ (of the form~\eqref{eq:Y1}) on $J^2(M,N)$ is called an \emph{infinitesimal contact transformation} if it generates (through equation~\eqref{eq:Y2}) a one parameter group of diffeomorphisms $\Phi_{\lambda}$ of contact transformations.
\end{definition}

The following theorem characterizes all the infinitesimal contact transformations on~$J^2(M,N)$.

\begin{theorem}
  \label{thm:contact1}A vector field $Y$ on $J^2(M,N)$ is an infinitesimal contact transformation (in the sense of Definition \ref{definition:infcontact}) if and only if there exists a unique
  smooth map $\Omega \colon J^1 (M, N) \rightarrow \mathbb{R}$ such that $Y =
  Y_{\Omega}$, where $Y_{\Omega}$ is a vector field on $J^2 (M, N)$ defined~as
  \begin{equation}
  \begin{split}
    Y_{\Omega} = & \, - \sum_{i=1}^n \partial_{u_{x^i}} \Omega \partial_{x^i} +
    \left( \Omega - \sum_{i=1}^n u_{x^i} \partial_{u_{x^i}} \Omega \right)
    \partial_u + \sum_{i=1}^n (\partial_{x^i} \Omega + u_{x^i} \partial_u \Omega)
    \partial_{u_{x^i}}\\
    & + \sum_{i,j,k,\ell =1}^n \biggl( \partial_{x^i x^j} \Omega + u_{x^j}
    \partial_{x^i u} \Omega + u_{x^j x^k} \partial_{x^i u_{x^k}} \Omega +
    u_{x^i} \partial_{x^j u} \Omega + u_{x^i} u_{x^j}
    \partial_{u u} \Omega + u_{x^i} u_{x^j x^k} \partial_{u u_{x^k}} \Omega
    \\
    & + u_{x^i x^j} \partial_u \Omega + u_{x^i x^k} \partial_{x^j u_{x^k}} \Omega + u_{x^i x^k} u_{x^j} \partial_{u_{x^k} u} \Omega
    + u_{x^i x^k} u_{x^j x^{\ell}} \partial_{u_{x^k} u_{x^{\ell}}}
    \Omega \biggr) \partial_{u_{x^i x^j}} \label{eq:contact1} . 
  \end{split}
  \end{equation}
\end{theorem}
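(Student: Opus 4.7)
The plan is to reduce the statement to a condition on the Lie derivative of the contact forms along $Y$. By Theorem~\ref{thm:loc-diffeo-contact}, $Y$ generates a one-parameter group of contact transformations if and only if each $\Phi_{Y,\lambda}$ preserves the contact ideal $\mathfrak{C}$; differentiating the identity $\Phi_{Y,\lambda}^{\ast}(\mathfrak{C}) = \mathfrak{C}$ at $\lambda=0$ yields the infinitesimal criterion
\[ \mathcal{L}_Y\kappa \in \mathfrak{C}, \qquad \mathcal{L}_Y\kappa_{x^i} \in \mathfrak{C}, \qquad i=1,\ldots,n, \]
which I would then translate into coordinates on $J^2(M,N)$.

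I will introduce the \emph{characteristic function} $\Omega \assign \iota_Y \kappa = Y^u - \sum_i u_{x^i} Y^{x^i}$. Using Cartan's magic formula $\mathcal{L}_Y = \mathd \iota_Y + \iota_Y \mathd$, together with the identity $\mathd \kappa = -\sum_i \kappa_{x^i} \wedge \mathd x^i$ (the stray $u_{x^i x^j}\,\mathd x^j\wedge \mathd x^i$ terms cancel by the symmetry $u_{x^i x^j}=u_{x^j x^i}$), a short computation gives
\[ \mathcal{L}_Y\kappa = \mathd \Omega - \sum_{i=1}^n \Bigl(Y^{u_{x^i}} - \sum_{j=1}^n u_{x^i x^j}\, Y^{x^j}\Bigr)\mathd x^i + \sum_{i=1}^n Y^{x^i}\,\kappa_{x^i}. \]
Expanding $\mathd \Omega$ in the basis $(\mathd x^i,\kappa,\kappa_{x^i},\mathd u_{x^i x^j})$, the membership $\mathcal{L}_Y\kappa \in \mathfrak{C}$ amounts to the vanishing of the $\mathd u_{x^i x^j}$ and $\mathd x^j$ coefficients. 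The first forces $\partial_{u_{x^i x^j}}\Omega = 0$, so $\Omega$ descends to a function on $J^1(M,N)$; the second reads
\[ Y^{u_{x^j}} - \sum_i u_{x^i x^j}\, Y^{x^i} \;=\; \partial_{x^j}\Omega + u_{x^j}\partial_u\Omega + \sum_i u_{x^i x^j}\,\partial_{u_{x^i}}\Omega, \]
and matching the coefficients of the free variables $u_{x^i x^j}$ on $J^2$ pins down
\[ Y^{x^i} = -\partial_{u_{x^i}}\Omega, \qquad Y^{u_{x^j}} = \partial_{x^j}\Omega + u_{x^j}\partial_u\Omega, \qquad Y^u = \Omega - \sum_i u_{x^i}\,\partial_{u_{x^i}}\Omega, \]
which reproduces the first three groups of terms in~\eqref{eq:contact1} and is consistent with Remark~\ref{remark:contacthistory1}.

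The second-order components $Y^{u_{x^i x^j}}$ are obtained from the analogous analysis of $\mathcal{L}_Y\kappa_{x^i}\in\mathfrak{C}$: the vanishing of its $\mathd x^j$ coefficient is equivalent to the standard second-prolongation identity
\[ Y^{u_{x^i x^j}} \;=\; D_{x^j}\bigl(Y^{u_{x^i}}\bigr) - \sum_k D_{x^j}\bigl(Y^{x^k}\bigr)\,u_{x^i x^k}, \]
where $D_{x^j} \assign \partial_{x^j} + u_{x^j}\partial_u + \sum_k u_{x^j x^k}\,\partial_{u_{x^k}}$ is the total derivative on $J^1(M,N)$. Substituting the expressions for $Y^{u_{x^i}}$ and $Y^{x^k}$ in terms of $\Omega$ and expanding produces precisely the ten-term polynomial in the partial derivatives of $\Omega$ displayed inside the parentheses of~\eqref{eq:contact1}. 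Uniqueness of $\Omega$ is then immediate from the defining relation $\Omega = Y^u - \sum_i u_{x^i} Y^{x^i}$, while the converse is obtained by reading the computation backward: the vector field $Y_\Omega$ built from an arbitrary $\Omega\in C^\infty(J^1(M,N))$ via~\eqref{eq:contact1} satisfies $\mathcal{L}_{Y_\Omega}\kappa,\,\mathcal{L}_{Y_\Omega}\kappa_{x^i}\in\mathfrak{C}$ by construction, so its flow preserves $\mathfrak{C}$ by Theorem~\ref{thm:loc-diffeo-contact}. I expect the main obstacle to be the bookkeeping required to check the ten-term formula for $Y^{u_{x^i x^j}}$: the calculation is mechanical, but tracking all mixed-derivative contributions demands care.
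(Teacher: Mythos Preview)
Your argument is correct in spirit and is precisely the standard Lie-derivative/Cartan-formula derivation one finds in the literature; in fact the paper does not give its own proof of this theorem at all but simply refers the reader to Chapter~21 of Stephani. So you are supplying strictly more than the paper does, and by the expected route.

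One step deserves a word of care. After obtaining
\[
Y^{u_{x^j}} - \sum_i u_{x^i x^j}\, Y^{x^i} \;=\; \partial_{x^j}\Omega + u_{x^j}\,\partial_u\Omega + \sum_i u_{x^i x^j}\,\partial_{u_{x^i}}\Omega
\]
from the vanishing of the $\mathd x^j$-coefficient of $\mathcal{L}_Y\kappa$, you ``match the coefficients of the free variables $u_{x^i x^j}$'' to read off $Y^{x^i}=-\partial_{u_{x^i}}\Omega$. That matching is only legitimate once you know that $Y^{x^i}$ and $Y^{u_{x^j}}$ are themselves independent of the second-order coordinates $u_{x^kx^l}$; a priori $Y$ lives on $J^2(M,N)$ and nothing in $\mathcal{L}_Y\kappa\in\mathfrak{C}$ alone excludes such dependence (the $\kappa_{x^i}$-coefficients are unconstrained since $\kappa_{x^i}\in\mathfrak{C}$). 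The missing ingredient is exactly the content of Remark~\ref{remark:contacthistory1} (the B\"acklund-type fact that contact transformations of $J^2$ in the scalar case project to $J^1$), which in infinitesimal form says $Y^{x^i},Y^{u},Y^{u_{x^i}}$ depend only on $(x,u,u_x)$. Alternatively, run the same Cartan-formula analysis directly on $J^1(M,N)$, where the contact ideal is generated by $\kappa$ alone: there the $\mathd u_{x^i}$-coefficient of $\mathcal{L}_Y\kappa$ immediately yields $Y^{x^i}=-\partial_{u_{x^i}}\Omega$ without any matching, and the $J^2$-components then follow by the prolongation formula you wrote. With this clarification your proof is complete.
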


\begin{proof}
  The proof can be found in Chapter~21 of~{\cite{stephani_differential_1989}}
  and references therein.
\end{proof}

\begin{remark}
  \label{rmk:inf-cont-transf}We say that a vector field of the form
  $Y_{\Omega}$ satisfying the hypotheses of Theorem~\ref{thm:contact1} is the infinitesimal contact transformation generated by the (contact
  generating) function~$\Omega$. 
  Under this terminology,
  Theorem~\ref{thm:loc-diffeo-contact} guarantees that any infinitesimal
  contact transformation is generated in a unique way by some smooth function
  $\Omega \colon J^1 (M, N) \rightarrow \mathbb{R}$.
\end{remark}

There is a special subset of vector fields of the type~{\eqref{eq:contact1}}
arising from coordinate transformations involving only the dependent and
independent variables $(x,u)$. 

\begin{definition}
We say that $Y_{\Omega_{\mathrm{Lie},f,g}}$ is a \emph{(projected) Lie point transformation} if it is a contact transformation of the form 
\begin{equation}\label{eq:Liepoint1}
Y_{\Omega_{\mathrm{Lie},f,g}}= \sum_{i=1}^n f^i (x) \partial_{x^i} + g (x, u)\partial_{u}+\sum_{i=1}^n Y^{u_{x^i}} (x, u, u_{x})\partial_{u_{x^i}}+\sum_{i,j=1}^nY^{u_{x^ix^j}} (x, u, u_{x},u_{xx}) \partial_{u_{x^ix^j}},
\end{equation}
where $f^i\in C^{\infty}(M,\mathbb{R})$, $g\in C^{\infty}(J^{0}(M,N),\mathbb{R})$, $Y^{u_{x^i}} \in C^{\infty}(J^{1}(M,N),\mathbb{R})$ and $Y^{u_{x^ix^j}}\in C^{\infty}(J^{2}(M,N),\mathbb{R})$.
\end{definition}

\begin{remark}\label{remark:Liepoint1}
It is simple to see that a Lie point transformation $Y_{\Omega_{\text{Lie},f,g}}$ can be reduced to a standard vector
field $\tilde{Y} = \sum_i f^i (x) \partial_{x^i} + g (x, u) \partial_u$ on $J^0
(\mathbb{R}^n, \mathbb{R})$, i.e., $\tilde{Y}$ is the generator of a one parameter
group of transformations involving only the dependent and independent
variables $(x,u)$.
\end{remark}

\begin{remark}
 Another important property of Lie point transformations is the following. Denoting by $\Phi_{\text{Lie},f,g,\lambda}$, where $\lambda \in \mathbb{R}$, the one parameter group generated by the Lie point transformation $Y_{\Omega_{\text{Lie},f,g}}$, we have that, for any $\lambda \in \mathbb{R}$, the domain $\mathcal{F}_{\Phi_{\text{Lie},f,g,\lambda}}$ of the nonlinear operator $F_{\Phi_{\text{Lie},f,g,\lambda}}$ generated by $\Phi_{\text{Lie},f,g,\lambda}$ is the whole $C^{\infty}(M,N)=\mathcal{F}_{\Phi_{\text{Lie},f,g,\lambda}}$.
\end{remark}

For what follows, we introduce the (formal) operators $\mathcal{D}_{x^i}\colon C^{\infty}(J^k(M,N)) \rightarrow C^{\infty}(J^{k+1}(M,N))$ given by
\begin{equation}\label{eq:mathcalD}
\mathcal{D}_{x^i}=\partial_{x^i}+u_{x^i}\partial_{u}+\sum_{j=1}^nu_{x^ix^j}\partial_{u_{x^j}}+\ldots+\sum_{j_1\geq \ldots \geq j_p=1}^n u_{ x^{j_1} \cdots x^{j_{\ell}} x^i } \partial_{ u_{x^{j_1}\cdots x^{j_{\ell}}}}+\ldots
\end{equation}
In a similar way, we write $\mathcal{D}_{x^i x^j}(\cdot)=\mathcal{D}_{x^i}(\mathcal{D}_{x^j}(\cdot))$, $\mathcal{D}_{x^i x^jx^{\ell}}(\cdot)=\mathcal{D}_{x^i}(\mathcal{D}_{x^j}(\mathcal{D}_{x^{\ell}}(\cdot)))$, etc.\\

We can characterize more precisely the general form of Lie point transformations.

\begin{theorem}\label{theorem_Liepoint}
The vector field $Y_{\Omega_{\mathrm{Lie},f,g}}$ is a (projected) Lie point transformation if and only if it is generated by a function of the form 
\begin{equation}\label{eq:Liegenerator} 
\Omega_{\mathrm{Lie},f,g} (x, u, u_{x}) = g (x, u) - \sum_{i=1}^n f^i (x) u_{x^i}, 
\end{equation}
namely, $Y_{\Omega_{\mathrm{Lie},f,g}}$ has the following expression
\begin{equation*} 
\begin{split}
  Y_{\Omega_{\mathrm{Lie,g,f}}} \assign & \, \sum_{i=1}^n f^i (x) \partial_{x^i} + g (x, u)
  \partial_u+ \sum_{i,j=1}^n (- \mathcal{D}_{x^i} f^j (x) u_{x^j} +  \mathcal{D}_{x^i} (g))
  \partial_{u_{x^i}} \\
  & + \sum_{i,j,k=1}^n (-  \mathcal{D}_{x^i x^j} (f^k) u_{x^k} -  \mathcal{D}_{x^i} (f^k) u_{x^k
  x^j} +  \mathcal{D}_{x^i x^j} (g)) \partial_{u_{x^i x^j}} .  
\end{split}
\end{equation*} 
\end{theorem}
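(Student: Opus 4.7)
The plan is to use Theorem~\ref{thm:contact1} as the starting point: any infinitesimal contact transformation is of the form $Y_{\Omega}$ for a unique smooth $\Omega\colon J^1(M,N)\to\mathbb{R}$, and by formula~\eqref{eq:contact1} the coefficients of $\partial_{x^i}$ and $\partial_u$ in $Y_\Omega$ are precisely
\[
-\partial_{u_{x^i}}\Omega \quad\text{and}\quad \Omega - \sum_{i=1}^n u_{x^i}\partial_{u_{x^i}}\Omega,
\]
respectively. Both directions of the theorem can be read off from these two formulas plus a direct computation of the higher coefficients.

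For the ``only if'' direction, suppose $Y=Y_\Omega$ has the Lie point form~\eqref{eq:Liepoint1}. The requirement that the coefficient of $\partial_{x^i}$ depend only on $x$ translates to $-\partial_{u_{x^i}}\Omega = f^i(x)$, which forces $\partial_u\partial_{u_{x^i}}\Omega = 0$ and $\partial_{u_{x^j}}\partial_{u_{x^i}}\Omega = 0$ for all $i,j$. Thus $\Omega$ is affine in $(u_{x^1},\dots,u_{x^n})$ with coefficients depending only on $x$, and can be written as $\Omega(x,u,u_x) = h(x,u) - \sum_i f^i(x) u_{x^i}$ for some smooth $h$. Substituting this into the $\partial_u$-coefficient gives
\[
\Omega - \sum_i u_{x^i}\partial_{u_{x^i}}\Omega = h(x,u) - \sum_i f^i(x) u_{x^i} + \sum_i u_{x^i} f^i(x) = h(x,u),
\]
so the requirement that the $\partial_u$-coefficient be $g(x,u)$ identifies $h = g$. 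This yields~\eqref{eq:Liegenerator}.

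For the ``if'' direction, one simply plugs $\Omega = g(x,u) - \sum_k f^k(x)u_{x^k}$ into~\eqref{eq:contact1} and computes each coefficient. The mixed partials $\partial_{u u_{x^k}}\Omega$, $\partial_{u_{x^k} u_{x^\ell}}\Omega$ vanish because $\Omega$ is affine in $u_x$, while the surviving partials are
\[
\partial_{u_{x^i}}\Omega = -f^i(x),\qquad \partial_u\Omega = \partial_u g,\qquad \partial_{x^i}\Omega = \partial_{x^i}g - \sum_j \partial_{x^i}f^j\, u_{x^j},
\]
and their higher derivatives in $(x,u)$. For the $\partial_{u_{x^i}}$ coefficient, adding up $\partial_{x^i}\Omega + u_{x^i}\partial_u\Omega$ and recognizing $\mathcal{D}_{x^i}g = \partial_{x^i}g + u_{x^i}\partial_u g$, $\mathcal{D}_{x^i}f^j = \partial_{x^i}f^j$ from~\eqref{eq:mathcalD} immediately reproduces $\mathcal{D}_{x^i}g - \sum_j \mathcal{D}_{x^i}(f^j) u_{x^j}$. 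The $\partial_{u_{x^ix^j}}$ coefficient is the main bookkeeping step: after substitution, all the surviving terms regroup into $\mathcal{D}_{x^ix^j}(g) - \sum_k\mathcal{D}_{x^ix^j}(f^k)u_{x^k} - \sum_k \mathcal{D}_{x^i}(f^k)u_{x^kx^j}$ by iterating the total-derivative rule and using that $g$ depends only on $(x,u)$ and $f^k$ only on $x$.

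The main obstacle is purely notational and combinatorial: verifying the second-order coefficient requires carefully tracking the ten terms in~\eqref{eq:contact1}, using that the mixed $u$-derivatives of $\Omega$ vanish, and recognizing the emerging polynomial in $(u_x, u_{xx})$ as the iterated total derivatives of $g$ and $f^k$. Everything else is a one-line substitution. No further geometric input beyond Theorem~\ref{thm:contact1} is needed.
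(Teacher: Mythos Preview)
Your proposal is correct and follows exactly the route the paper indicates: the paper's proof is the single sentence ``The theorem is a direct application of Theorem~\ref{thm:contact1} to vector fields of the form~\eqref{eq:Liepoint1},'' and what you have written is precisely that direct application spelled out in detail.
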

\begin{proof}
The theorem is a direct application of Theorem~\ref{thm:contact1} to vector fields of the form~\eqref{eq:Liepoint1}.
\end{proof}

If $n = 1$, and the coordinate system of $J^0 (\mathbb{R}, \mathbb{R})$ is
given by $(x, u)$, some examples of Lie point transformations~are:
\begin{itemize}
  \item The dilation of independent variable $x$, i.e., $\tilde{Y} = x \partial_x$ (see the notation in Remark \ref{remark:Liepoint1}),  
  related to the generator function $\Omega = - x u_x$ and generating the one
  parameter group defined~by
  \[ \Phi^x_{\lambda}(x, u) = e^{\lambda} x \qquad \Phi^{u}_{\lambda} (x, u) = u \qquad \Phi^{u_x}_{\lambda}
     (x, u, u_x) = e^{- \lambda} u_x \qquad \Phi^{u_{x x}}_{\lambda} (x, u, u_x, u_{x x}) =
     e^{- 2 \lambda} u_{x x}. \]
  \item The dilation of dependent variable $u$, namely, $\tilde{Y} = u \partial_u$
  related to the generator function $\Omega = u$ and generating the one
  parameter group defined~by
  \[ \Phi^x_{\lambda} (x, u) = x \qquad \Phi^u_{\lambda} (x, u) = e^{\lambda} u \qquad \Phi^{u_x}_{\lambda}
     (x, u, u_x) = e^{\lambda} u_x \qquad \Phi^{u_{x x}}_{\lambda} (x, u, u_x, u_{x x}) = e^{\lambda}
     u_{x x} . \]
\end{itemize}

We conclude this section providing the definition of symmetry of a differential equation.

\begin{definition}\label{definition_symmetry}
A contact transformation $\Phi \colon J^2(M,N) \rightarrow J^{2}(M,N)$ is a \emph{(contact) symmetry of  the differential equation $\mathcal{E}$} if, for any solution $U \in C^{\infty}(M,N) \cap \mathcal{F}_{\Phi}$ to the equation~$\mathcal{E}$, also $F_{\Phi}(U)$ is a solution to~$\mathcal{E}$, where $F_{\Phi}$ and $\mathcal{F}_{\Phi}$ are the operator generated by the contact transformation $\Phi$ and the domain of $F_{\Phi}$, respectively (see Definition~\ref{definition:FPhi}). \\
We say that an (infinitesimal) contact transformation $Y_{\Omega}$ is an \emph{(infinitesimal contact) symmetry of the differential equation~$\mathcal{E}$} if the one parameter group $\Phi_{Y_{\Omega},\lambda}$ generated by $Y_{\Omega}$ is a set of symmetries of the equation~$\mathcal{E}$.  
\end{definition}

\begin{remark}
With an abuse of language, we say that the function $\Omega \in C^{\infty}(J^1(M,N),\mathbb{R})$ is a \emph{contact symmetry of the equation~$\mathcal{E}$} if the corresponding contact vector field $Y_{\Omega}$ is a symmetry of~$\mathcal{E}$.
\end{remark}

\begin{remark}
If $Y$ is a Lie point transformation and it is a contact symmetry of the equation $\mathcal{E}$, then we say that $Y$ is a \emph{Lie point symmetry} of the equation~$\mathcal{E}$.
\end{remark}

It is possible to give a completely geometric characterization of the contact symmetries of a differential equation~$\mathcal{E}$.

\begin{theorem}[Determining equations]\label{theorem:determining}
A contact transformation $\Phi$ is a symmetry of the equation~$\mathcal{E}$ represented by the sub-manifold $\Delta_{\mathcal{E}} \subset J^2(M,N)$ of the form  
\[ \Delta_\mathcal{E} = \{
E^i (x, u, u_{x}, u_{xx}) = 0, i \in \{ 1, \ldots, p \} \}, \] 
where $p\in \mathbb{N}$, $p>0$ and $E^i\in C^{\infty}(J^2(M,N),\mathbb{R})$, 
if and only~if 
\[\Phi(\Delta_{\mathcal{E}})=\Delta_{\mathcal{E}}.\]
The infinitesimal contact transformation $Y_{\Omega}$ is a symmetry of the non-degenerate differential equation $\mathcal{E}$ (see Remark~\ref{remark:nondegenerate} for the definition of non-degenerate differential equation) if and only~if 
  \begin{equation}
    Y (E^{i} (x, u, u_{x},u_{xx})) \vert_{\Delta_{\mathcal{E} }} = 0, \label{eq:symmetry}
  \end{equation}
   where $i = 1, \ldots, p$.
\end{theorem}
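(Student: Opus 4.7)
My plan is to prove the two equivalences in order, reducing the infinitesimal (second) statement to the global (first) one through the standard correspondence between flow-invariance of a submanifold and tangency of its generator.

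For the global equivalence, the ``$\Leftarrow$'' direction is essentially a direct unwinding of Definition~\ref{definition:FPhi}: if $\Phi(\Delta_{\mathcal{E}})=\Delta_{\mathcal{E}}$ and $U\in\mathcal{F}_{\Phi}$ solves $\mathcal{E}$, then, setting $V=F_{\Phi}(U)$ and $\tilde x=C_{U,\Phi}(x)$, the three identities defining $F_{\Phi}$ say exactly that $\boldsymbol{D}^2 V(\tilde x)=\Phi(\boldsymbol{D}^2 U(x))\in\Phi(\Delta_{\mathcal{E}})=\Delta_{\mathcal{E}}$, so $V$ solves $\mathcal{E}$. For the ``$\Rightarrow$'' direction I invoke the non-degeneracy hypothesis recalled in Remark~\ref{remark:nondegenerate}: any point $p=(x_0,u^0,u_x^0,u_{xx}^0)\in\Delta_{\mathcal{E}}$ is the $2$-jet at $x_0$ of some local solution $U$ of $\mathcal{E}$; after restricting $U$ to a small enough neighborhood of $x_0$ the map $C_{U,\Phi}$ becomes a local diffeomorphism (since $\Phi$ itself is a diffeomorphism of $J^2(M,N)$), so $U\in\mathcal{F}_{\Phi}$ after this shrinking. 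Since $\Phi$ is a symmetry, $F_{\Phi}(U)$ is again a solution of $\mathcal{E}$, hence $\Phi(p)=\boldsymbol{D}^2 F_{\Phi}(U)(C_{U,\Phi}(x_0))\in\Delta_{\mathcal{E}}$. Thus $\Phi(\Delta_{\mathcal{E}})\subseteq\Delta_{\mathcal{E}}$, and applying the same argument to the contact transformation $\Phi^{-1}$ gives the reverse inclusion.

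For the infinitesimal equivalence, the global part just proven shows that $Y_{\Omega}$ is an infinitesimal symmetry if and only if $\Phi_{Y_{\Omega},\lambda}(\Delta_{\mathcal{E}})=\Delta_{\mathcal{E}}$ for every $\lambda\in\mathbb{R}$. I then appeal to the standard ODE fact that the flow of a smooth vector field $Y$ preserves a regularly cut out submanifold $\Delta_{\mathcal{E}}=\{E^1=\cdots=E^p=0\}$ precisely when $Y$ is tangent to it at every point, which in the given coordinates reads $Y(E^i)|_{\Delta_{\mathcal{E}}}=0$ for each $i$: one direction is obtained by differentiating $\lambda\mapsto E^i(\Phi_{Y_{\Omega},\lambda}(q))$ at $\lambda=0$ for $q\in\Delta_{\mathcal{E}}$, and the other by restricting the flow ODE to $\Delta_{\mathcal{E}}$ and invoking uniqueness of solutions. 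The main obstacle in the whole argument is precisely the ``$\Rightarrow$'' direction of the first equivalence, where non-degeneracy is essential: without it, $\Delta_{\mathcal{E}}$ could contain points that are not realized as the $2$-jet of any solution, and then the symmetry condition would place no constraint on $\Phi$ at those points, so the use of Remark~\ref{remark:nondegenerate} to realize every point of $\Delta_{\mathcal{E}}$ as a genuine $2$-jet is the step that makes the proof work.
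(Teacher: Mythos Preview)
The paper does not actually prove this result: its proof consists solely of a citation to Theorems~2.27 and~2.71 of Olver's \emph{Applications of Lie Groups to Differential Equations}, together with the remark that since contact transformations are diffeomorphisms of $J^h(M,N)$ for every $h\geq 1$, Olver's argument for Lie point symmetries carries over unchanged. Your sketch is precisely the standard argument underlying those cited theorems, so in substance you and the paper agree; you have simply written out what the paper leaves to the reference.

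One point in your write-up should be tightened. Your justification that $C_{U,\Phi}$ is a local diffeomorphism near $x_0$ ``since $\Phi$ itself is a diffeomorphism of $J^2(M,N)$'' is not valid as stated: the Jacobian of $x\mapsto C_{U,\Phi}(x)$ at $x_0$ depends only on the $2$-jet $p$, and for certain contact transformations (e.g.\ a Legendre-type transformation at a jet with degenerate $u_{xx}$) it can be singular, so no shrinking of the domain helps. The standard remedy, and what non-degeneracy in the sense of Remark~\ref{remark:nondegenerate} (maximal rank plus local solvability) actually buys, is that the set of such singular jets is a closed nowhere-dense subset of $\Delta_{\mathcal{E}}$; one first proves $\Phi(p)\in\Delta_{\mathcal{E}}$ on its complement and then concludes by continuity of $\Phi$ and closedness of $\Delta_{\mathcal{E}}$. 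This is a routine technicality handled in Olver's treatment, not a flaw in your overall strategy.
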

\begin{proof}
The proof is given in Theorem~2.27 and Theorem~2.71 in~\cite{olver_applications_1993} for the case of Lie point symmetries which are diffeomorphisms of $J^k(M,N)$, for $k\geq 0$. 
Since the contact transformations are diffeomorphism of $J^h(M,N)$, for any $h\geq 1$ (see, e.g., Chapter~21 of~{\cite{stephani_differential_1989}}), the case of contact transformations can be proved using the same methods.
\end{proof}

\subsection{Symmetries and classical Noether theorem}

Let us discuss here the classical Noether theorem in the Lagrangian mechanics
setting described in Section~\ref{sec:lagrange}. 
Heuristically, Noether theorem says that to any infinitesimal transformation leaving invariant the optimal control problem, namely equation~{\eqref{eq:lagrange1}} and the Lagrangian~$L$, a constant of motion is associated.

More precisely, let $Y^{x, a}$ be a vector field in $\mathbb{R}^n \times
\mathbb{R}^n$ transforming the variables $x^i$ and $a^i$ of
equations~{\eqref{eq:lagrange1}} and the Lagrangian~$L$. 
We suppose that
$Y^{x, a}$ is ``projected'' with respect to the variables $x^i$, that~is,
\begin{equation}
  Y^{x, a} = \sum_{i = 1}^n \left( f^i (x) \partial_{x^i} + g^i (x,a) \partial_{a^i} \right). \label{eq:projvf}
\end{equation}
If we want the projected vector field~{\eqref{eq:projvf}} to be a symmetry of
equation~{\eqref{eq:lagrange1}}, then we need that
\begin{equation}
  g^i (x, a) = \sum_{j = 1}^n \partial_{x^j} f^i (x) a^j .
  \label{eq:lagrange5}
\end{equation}
If we also require that $L$ is invariant with respect to the flow of $Y^{x,
a}$, then we must have
\begin{equation}
  Y^{x, a} (L) (x, a) = \sum_{i = 1}^n f^i (x) \partial_{x^i} L (x, a) +
  \sum_{i, j = 1}^n \partial_{x^j} f^i (x) a^j \partial_{a^i} L (x, a) = 0 \label{eq:lagrange4}
  .
\end{equation}
So we say that $Y^{x, a}$ is a symmetry of the optimal control problem of
Section~\ref{sec:lagrange} if and only if conditions~{\eqref{eq:lagrange5}}
and~{\eqref{eq:lagrange4}} hold.

\begin{theorem}[Noether theorem]
  \label{theorem:cNoether1}Let $Y^{x, a}$ be a symmetry of the Lagrangian $L$
  according with equation~{\eqref{eq:lagrange4}}. 
  Then, supposing the existence of a $C^1$ optimal control $\alpha_t$, we have that
  \begin{equation}
    \sum_{i = 1}^n f^i (X_t) \partial_{a^i} L (X_t, \alpha_t)
    \label{eq:lagrange8}
  \end{equation}
  is constant with respect to time $t \in [t_0, T]$.
\end{theorem}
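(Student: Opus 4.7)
The plan is to differentiate the candidate conserved quantity with respect to time and show that the result equals the symmetry condition $Y^{x,a}(L)(X_t,\alpha_t)$, which vanishes by assumption. The key input beyond the symmetry hypotheses \eqref{eq:lagrange5} and \eqref{eq:lagrange4} is the Euler--Lagrange equation \eqref{eq:EulerLagrange} recalled in Remark~\ref{remark:EulerLagrange}, which holds along the $C^1$ optimal trajectory $(X_t,\alpha_t)$. We also exploit that, from equation \eqref{eq:lagrange1}, $\dot X_t^j = \alpha_t^j$.

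First, using that $X_t$ and $\alpha_t$ are $C^1$ and that $L$ is smooth, I would differentiate the quantity \eqref{eq:lagrange8}:
\[
\frac{\mathd}{\mathd t}\sum_{i=1}^n f^i(X_t)\,\partial_{a^i}L(X_t,\alpha_t)
= \sum_{i,j=1}^n \partial_{x^j}f^i(X_t)\,\dot X_t^j\,\partial_{a^i}L(X_t,\alpha_t)
+ \sum_{i=1}^n f^i(X_t)\,\frac{\mathd}{\mathd t}\bigl(\partial_{a^i}L(X_t,\alpha_t)\bigr).
\]
Next, I would substitute $\dot X_t^j=\alpha_t^j$ in the first sum and apply the Euler--Lagrange equation \eqref{eq:EulerLagrange} to rewrite the time derivative in the second sum as $\partial_{x^i}L(X_t,\alpha_t)$.

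This yields
\[
\frac{\mathd}{\mathd t}\sum_{i=1}^n f^i(X_t)\,\partial_{a^i}L(X_t,\alpha_t)
= \sum_{i,j=1}^n \partial_{x^j}f^i(X_t)\,\alpha_t^j\,\partial_{a^i}L(X_t,\alpha_t)
+ \sum_{i=1}^n f^i(X_t)\,\partial_{x^i}L(X_t,\alpha_t).
\]
Comparing with the definition of $Y^{x,a}$ in \eqref{eq:projvf} and using \eqref{eq:lagrange5}, the right-hand side is exactly $Y^{x,a}(L)(X_t,\alpha_t)$, which equals zero by the symmetry assumption \eqref{eq:lagrange4}. Hence the derivative vanishes identically on $[t_0,T]$, proving the claim.

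Since all steps are direct substitutions and an application of a single chain-rule identity, there is no genuine obstacle; the only subtle point worth highlighting in the write-up is the justification that the $C^1$ regularity of $\alpha_t$ (assumed in the statement) is precisely what is needed to legitimately invoke the Euler--Lagrange equation from Remark~\ref{remark:EulerLagrange}, since without this regularity the time derivative of $\partial_{a^i}L(X_t,\alpha_t)$ would not be defined in the classical sense.
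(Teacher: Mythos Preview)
Your proof is correct and follows essentially the same approach as the paper: differentiate the candidate conserved quantity, substitute $\dot X_t^j=\alpha_t^j$, replace $\frac{\mathd}{\mathd t}\partial_{a^i}L$ via the Euler--Lagrange equations, and recognize the result as $Y^{x,a}(L)(X_t,\alpha_t)=0$. The only difference is that you make the identification with $Y^{x,a}(L)$ and the role of the $C^1$ assumption slightly more explicit than the paper does.
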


\begin{proof}
  Let us compute the derivative with respect to time
  of~{\eqref{eq:lagrange8}}, then, by Euler-Lagrange
  equations~{\eqref{eq:EulerLagrange}}, we have
  \begin{equation*} 
  \begin{split}
    \frac{\mathd}{\mathd t} \left( \sum_{i = 1}^n f^i (X_t) \partial_{a^i} L
    (X_t, \alpha_t) \right) = & \, \sum_{i,j = 1}^n \partial_{x^j} f^i (X_t)
    \partial_{a^i} L (X_t, \alpha_t) \frac{\mathd X^j_t}{\mathd t}
    + \sum_{i = 1}^n f^i (X_t) \frac{\mathd}{\mathd t}
    (\partial_{a^i} L (X_t, \alpha_t)) \\
    = & \, \sum_{i,j = 1}^n [\partial_{x^j} f^i (X_t) \alpha^j_t
    \partial_{a^i} L (X_t, \alpha_t) + f^i (X_t) (\partial_{x^i} L (X_t,
    \alpha_t))],
  \end{split}
  \end{equation*} 
  which is zero as a consequence of equation~{\eqref{eq:lagrange4}}.
\end{proof}

It is possible to give an equivalent formulation of
Theorem~\ref{theorem:cNoether1} using the Lie point symmetries of
Hamilton-Jacobi equation.

\begin{theorem}[Noether theorem, Hamilton-Jacobi version]
  \label{theorem:cNoether2}Let $\Omega (x,u_x) = \sum_{i = 1}^n f^i (x) u_{x^i}$ be a
  contact symmetry of the Hamilton-Jacobi equation~{\eqref{eq:lagrange3}}.
  Then, if $U \in C^{1, 2} ([t_{0,}, T] \times \mathbb{R}^n, \mathbb{R})$ is a
  solution to equation~{\eqref{eq:lagrange3}}, we have that
  \begin{equation}
    \Omega (X_t, \nabla U (X_t)) = \sum_{i = 1}^n f^i (X_t) \partial_{x^i} U
    (X_t), \label{eq:lagrange6}
  \end{equation}
  where $X_t$ is the solution to equation~{\eqref{eq:lagrange1}} with
  $\alpha^i_t = \mathcal{A}^i (X_t, \nabla U (X_t))$ (see Section \ref{s:opt-ctrl} for the definition of the map $\mathcal{A}$), is constant with respect to
  time $t \in [t_0, T]$.
\end{theorem}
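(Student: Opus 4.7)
I would prove the theorem by direct computation. Setting $M_t := \Omega(X_t, \nabla U(t, X_t)) = \sum_{i=1}^n f^i(X_t)\, U_{x^i}(t, X_t)$, the chain rule together with $\dot X^j_t = \alpha^j_t = \mathcal{A}^j(X_t, \nabla U(t, X_t))$ gives
\[
\frac{d M_t}{dt} = \sum_{i,j=1}^n \partial_{x^j} f^i(X_t)\, \alpha^j_t\, U_{x^i}(t, X_t) + \sum_{i=1}^n f^i(X_t) \Bigl( \partial_t U_{x^i}(t, X_t) + \sum_{j=1}^n U_{x^i x^j}(t, X_t)\, \alpha^j_t \Bigr).
\]
Differentiating the HJB equation~\eqref{eq:lagrange3} with respect to $x^i$ and invoking the envelope identities
\[
\partial_{p_j} H(x,p) = \mathcal{A}^j(x,p),\quad \partial_{x^i} H(x,p) = \partial_{x^i} L\bigl(x, \mathcal{A}(x,p)\bigr),\quad \partial_{a^i} L\bigl(x, \mathcal{A}(x,p)\bigr) = -p_i,
\]
all of which follow from $\mathcal{A}$ being the argmax of $a \mapsto \sum_i a^i p_i + L(x, a)$, one sees that the inner parenthesis equals (up to sign) $\partial_{x^i} L(X_t, \alpha_t)$. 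Using once more $U_{x^i}(t, X_t) = -\partial_{a^i} L(X_t, \alpha_t)$ in the remaining sum, $\dot M_t$ collapses to a multiple of
\[
\sum_{i=1}^n f^i(X_t)\, \partial_{x^i} L(X_t, \alpha_t) + \sum_{i,j=1}^n \partial_{x^j} f^i(X_t)\, \alpha^j_t\, \partial_{a^i} L(X_t, \alpha_t).
\]

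To conclude, I would identify this expression with the left-hand side of the Lagrangian symmetry condition~\eqref{eq:lagrange4}. For this, I would unpack the determining equation $Y_\Omega(u_t - H)|_{u_t = H} = 0$ of Theorem~\ref{theorem:determining} for $\Omega(x, u_x) = \sum_i f^i(x) u_{x^i}$ via the contact prolongation formula~\eqref{eq:contact1}. Applying the same envelope identities and exploiting the smooth invertibility of $\mathcal{A}(x, \cdot)$, the condition that $\Omega$ be a contact symmetry of~\eqref{eq:lagrange3} translates into exactly~\eqref{eq:lagrange4}--\eqref{eq:lagrange5}. Specializing this identity at $(X_t, \alpha_t)$ makes the above display vanish, so $\dot M_t = 0$ on $[t_0, T]$.

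The main technical obstacle is this last translation step: verifying that the abstract determining equation for a contact symmetry of the HJB equation is genuinely equivalent to the classical Lagrangian symmetry identity~\eqref{eq:lagrange4}. The smooth invertibility of $\mathcal{A}(x, \cdot)$ is essential here, since it is what allows a relation originally parametrized by $u_x$ to be promoted to one holding for all $a \in \mathbb{R}^n$. All remaining steps reduce to standard chain-rule and Legendre-transform manipulations.
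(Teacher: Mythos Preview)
Your approach is correct but takes a different route from the paper's. The paper simply defers to Theorem~\ref{theorem_noether}, whose proof (specialized to $\sigma=0$) computes the It\^o differential of $O_t=\Omega(t,X_t,U(t,X_t),\nabla U(t,X_t))$ and identifies its drift directly with the determining equation~\eqref{eq:Omega}, which vanishes by hypothesis; in the deterministic Lagrangian setting this reduces to the same chain-rule computation you carry out, but the drift is killed by invoking the symmetry condition in its Hamiltonian form~\eqref{eq:lagrange7} rather than by first translating it into the Lagrangian invariance identity~\eqref{eq:lagrange4}. Your detour through~\eqref{eq:lagrange4} is legitimate and has the merit of making explicit the equivalence the paper asserts in the remark following the theorem, but it is not strictly needed: once you have reduced $\dot M_t$ to $\sum_{i,j}\partial_{x^j}f^i\,\alpha_t^j\,U_{x^i}(t,X_t)-\sum_i f^i(X_t)\,\partial_{x^i}H(X_t,\nabla U(t,X_t))$, this is already~\eqref{eq:lagrange7} evaluated along the optimal trajectory, and you may stop there without ever rewriting $U_{x^i}$ as $-\partial_{a^i}L$. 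What the paper's route buys is generality, since it covers the full stochastic setting and arbitrary contact generators $\Omega(t,x,u,u_x)$ in one stroke; what yours buys is a self-contained deterministic argument that exhibits the bridge between the Hamilton--Jacobi and Lagrangian formulations of Noether's theorem.
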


\begin{lemma}
  $Y_{\Omega}$ is a contact symmetry of the Hamilton-Jacobi equation~{\eqref{eq:lagrange3}} if and only~if
  \begin{equation}
    \sum_{i = 1}^n \left( \partial_{x^i} \Omega \, \partial_{u_{x^i}} H -
    \partial_{u_{x^i}} \Omega \, \partial_{x^i} H \right) = 0.
    \label{eq:lagrange7}
  \end{equation}
\end{lemma}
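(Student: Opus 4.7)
The plan is to apply the determining criterion of Theorem~\ref{theorem:determining} directly to the Hamilton--Jacobi equation~\eqref{eq:lagrange3}, written as $\Delta_\mathcal{E} = \{E = 0\}$ with
\[
E(t,x,u,u_t,u_x) \assign u_t - H(x, u_x).
\]
Since the HJ equation is non-degenerate in the sense of Remark~\ref{remark:nondegenerate}, the lemma reduces to checking that $Y_\Omega(E)\vert_{\Delta_\mathcal{E}} = 0$ is equivalent to~\eqref{eq:lagrange7}, and this equivalence will handle both implications simultaneously.

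The first step is to read off from the prolongation formula~\eqref{eq:contact1} only the components of $Y_\Omega$ that act non-trivially on $E$, treating $t$ as an additional independent variable on the same footing as the $x^i$. Because $E$ is first-order, no second-order coefficient of $Y_\Omega$ enters the computation. Because $\Omega = \Omega(x, u_x)$ as in Theorem~\ref{theorem:cNoether2}, the derivatives $\partial_t\Omega$, $\partial_u\Omega$, and $\partial_{u_t}\Omega$ all vanish, so the only coefficients of $Y_\Omega$ I need are
\[
Y_\Omega(x^i) = -\partial_{u_{x^i}}\Omega, \qquad Y_\Omega(u_t) = 0, \qquad Y_\Omega(u_{x^i}) = \partial_{x^i}\Omega.
\]

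The second step is the direct computation
\[
Y_\Omega(E) \;=\; Y_\Omega(u_t) - \sum_{i=1}^n \partial_{x^i}H\cdot Y_\Omega(x^i) - \sum_{i=1}^n \partial_{u_{x^i}}H\cdot Y_\Omega(u_{x^i}) \;=\; \sum_{i=1}^n \bigl(\partial_{u_{x^i}}\Omega\,\partial_{x^i}H - \partial_{x^i}\Omega\,\partial_{u_{x^i}}H\bigr),
\]
which is already independent of $u_t$, so the restriction to $\Delta_\mathcal{E}$ is automatic. Hence $Y_\Omega(E)\vert_{\Delta_\mathcal{E}}$ coincides, up to an overall sign, with the left-hand side of~\eqref{eq:lagrange7}, yielding the equivalence.

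The only mild obstacle is essentially notational: the prolongation formula~\eqref{eq:contact1} is stated for $n$ independent variables, so one must be careful in adapting it to the present $(n+1)$-dimensional setting in which $t$ plays the role of an additional coordinate. Beyond this bookkeeping the proof reduces to a single substitution.
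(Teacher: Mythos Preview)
Your argument is correct and follows exactly the route the paper indicates: the paper's proof merely says the claim is a consequence of the prolongation formula~\eqref{eq:contact1} and Definition~\ref{definition_symmetry} (equivalently Theorem~\ref{theorem:determining}), with a pointer to Stephani; you have simply spelled out that computation. Your bookkeeping on the $(n+1)$ independent variables and the vanishing of the $t$-, $u$-, and $u_t$-components of $Y_\Omega$ for $\Omega=\Omega(x,u_x)$ is accurate, and the observation that $Y_\Omega(E)$ is already independent of $u_t$ so that the restriction to $\Delta_\mathcal{E}$ is vacuous is a clean way to conclude.
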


\begin{proof}
  It is a consequence of equation~{\eqref{eq:contact1}} and
  Definition~\ref{definition_symmetry}.
  See, e.g., Section~21.2 in~\cite{stephani_differential_1989}.
\end{proof}

\begin{proof}[Proof of Theorem~\ref{theorem:cNoether2}]
	See the proof of Theorem~\ref{theorem_noether} below, where the statement is
	proven in the general stochastic case.
\end{proof}

\begin{remark}
  The two formulations of Noether theorem given by Theorem~\ref{theorem:cNoether1} and
  Theorem~\ref{theorem:cNoether2} are equivalent in the
  sense that $Y^{x, a} = \sum_{i, j = 1}^n ( f^i (x) \partial_{x^i} +
  \partial_{x^j} f^i (x, a) a^j \partial_{a^i}) $ is a symmetry of the optimal
  control problem if and only if $\Omega$ is a contact symmetry of the related
  Hamilton-Jacobi equation, namely, equation~{\eqref{eq:lagrange7}} holds.
  Furthermore, if we choose the optimal control $\alpha^i_t$ to be equal to
  $A^i (X_t, \nabla U (X_t))$, then the two conserved
  quantities~{\eqref{eq:lagrange8}} and~{\eqref{eq:lagrange6}} are equal.
\end{remark}

\section{Noether theorem for stochastic optimal control}\label{s:noether} 

\subsection{The case of deterministic HJB equation}

Considering $M = \mathbb{R}_+ \times \mathbb{R}^n$ and denoting the first variable by~$t$ and the other independent variables by~$x^i$, for $i = 1, \ldots, n$, for the Hamilton-Jacobi-Bellman equation we have that $\Delta_\mathcal{E}$ is described by the equation
\begin{equation}
  u_t + \max_{a \in
  K} \left\{ \frac{1}{2}  \sum_{i, j = 1}^n \eta^{i j} (t, x,
  a) u_{x^i x^j} + \sum_{i = 1}^n \mu^i (t, x, a) u_{x^i} + L (t, x, a)
  \right\} = 0. \label{eq:HJB}
\end{equation}

Equation~{\eqref{eq:HJB}} is a special kind of evolution equation since it has
the form
\begin{equation}
  u_t + H (t, x, u, u_x, u_{x, x})=0,
  \label{eq:evolution}
\end{equation}
for some smooth function $H \in C^2 (\mathbb{R} \times J^2 (\mathbb{R}^n,
\mathbb{R}))$, where $u_{x}=(u_{x_1},\ldots,u_{x_n})$, and $u_{x x} = (u_{x^i x^j})_{i,j=1,\ldots,n} $. 
In this case, it is convenient to choose a generating function of the form
\begin{equation}
  \Omega (t, x, u, u_{x}). \label{eq:generatingevolution}
\end{equation}

\begin{remark}\label{remark:time}
It is important to notice that, for a generic contact symmetry on $J^2(M,\mathbb{R})=J^2(\mathbb{R}_+ \times \mathbb{R}^n,\mathbb{R})$, the generating function has the form 
\begin{equation}\label{eq:generatingevolution2}
\tilde{\Omega}(t,x,u,u_t,u_x),
\end{equation}
depending also on the variable $u_t$ which represents the time derivative. 
Choosing a generating function of the form~\eqref{eq:generatingevolution} instead of the form~\eqref{eq:generatingevolution2} means to \emph{consider contact transformations that do not change the time variable~$t$}. 
The main reason is that the time variable in stochastic equations plays a peculiar role and cannot be changed in the same way as the spacial variable~$x$. 
Nevertheless, in~\cite{lescot_isovectors_2004,Lescot_Zambrini2008,thieullen_symmetries_1997} also a special kind of time change has been considered corresponding to the generating function 
\begin{equation}\label{eq:timetransform}
\tilde{\Omega}=f(t)u_t+ \Omega_{\text{Lie},f,g} (t, x, u, u_{x}),
\end{equation}
where $f \in C^{\infty}(\mathbb{R}_+,\mathbb{R})$, and $ \Omega_{\text{Lie},f,g} (t, x, u, u_{x})$ is the generator of a Lie point transformation, see equation~\eqref{eq:Liegenerator} (see also Remark~\ref{remark:further} for a further discussion of this point).
\end{remark}

\begin{theorem}
  Consider an evolution PDE of the form~{\eqref{eq:evolution}}. An infinitesimal contact transformation generated by the function $\Omega$ of the
  form~{\eqref{eq:generatingevolution}} is a {\emph{contact symmetry for
  equation~{\eqref{eq:evolution}}}} if and only~if
  \begin{equation}
    \partial_t \Omega - H \partial_u \Omega + \sum_{i, j = 1}^n \left( \mathcal{D}_{x^i} \Omega \partial_{u_{x^i}} H + \mathcal{D}_{x^i x^j} \Omega \partial_{u_{x^i
    x^j}} H - \mathcal{D}_{x^i} H \partial_{u_{x^i}} \Omega \right) = 0, \label{eq:Omega}
  \end{equation}
  where $\mathcal{D}_{x^i}$ are defined in equation~\eqref{eq:mathcalD} and $\mathcal{D}_{x^i x^j} \cdot = \mathcal{D}_{x^i} (\mathcal{D}_{x^j} (\cdot))$.
\end{theorem}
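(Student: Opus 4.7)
The proof applies the general determining criterion from Theorem~\ref{theorem:determining} to the evolution equation $\mathcal{E}=\{E=0\}$ with $E(t,x,u,u_t,u_x,u_{xx})=u_t+H(t,x,u,u_x,u_{xx})$; the task is to show that the condition $Y_\Omega(E)\big|_{\Delta_\mathcal{E}}=0$ is equivalent to~\eqref{eq:Omega}.

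First, I would write out the components of the contact vector field $Y_\Omega$ using Theorem~\ref{thm:contact1}, treating $(t,x^1,\ldots,x^n)$ as independent variables and using that $\Omega$ does not depend on~$u_t$. This gives $Y^t=-\partial_{u_t}\Omega=0$, $Y^{u_t}=\partial_t\Omega+u_t\partial_u\Omega$, $Y^{u}=\Omega-\sum_i u_{x^i}\partial_{u_{x^i}}\Omega$, $Y^{x^i}=-\partial_{u_{x^i}}\Omega$, $Y^{u_{x^i}}=\partial_{x^i}\Omega+u_{x^i}\partial_u\Omega$, and $Y^{u_{x^ix^j}}$ given by the expression in~\eqref{eq:contact1}. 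Since $H$ depends only on $(t,x,u,u_x,u_{xx})$ (in particular, not on $u_t$ or on mixed $t$-derivatives), I would then compute
\[
Y_\Omega(E)=Y^{u_t}+\sum_i Y^{x^i}\partial_{x^i}H+Y^u\partial_u H+\sum_i Y^{u_{x^i}}\partial_{u_{x^i}}H+\sum_{i,j}Y^{u_{x^ix^j}}\partial_{u_{x^ix^j}}H,
\]
and restrict to $\Delta_\mathcal{E}$ by replacing $u_t$ with $-H$ in $Y^{u_t}$, so that the latter contributes $\partial_t\Omega-H\partial_u\Omega$ to the final identity.

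The remaining step is a regrouping of terms into total-derivative form. By inspection of~\eqref{eq:contact1} and the operators defined in~\eqref{eq:mathcalD}, I would verify the identities $Y^{u_{x^i}}=\mathcal{D}_{x^i}\Omega-\sum_k u_{x^ix^k}\partial_{u_{x^k}}\Omega$ and $Y^{u_{x^ix^j}}=\mathcal{D}_{x^ix^j}\Omega-\sum_k u_{x^ix^jx^k}\partial_{u_{x^k}}\Omega$. Expanding also $\mathcal{D}_{x^i}H=\partial_{x^i}H+u_{x^i}\partial_u H+\sum_j u_{x^ix^j}\partial_{u_{x^j}}H+\sum_{j,k} u_{x^ix^jx^k}\partial_{u_{x^jx^k}}H$ and substituting everything back, the contributions from $Y^u\partial_u H$ and $\sum_i Y^{x^i}\partial_{x^i}H$ together with the correction pieces of $Y^{u_{x^i}}$ and $Y^{u_{x^ix^j}}$ should collapse onto the single sum $-\sum_i\mathcal{D}_{x^i}H\,\partial_{u_{x^i}}\Omega$, leaving exactly the determining equation~\eqref{eq:Omega}.

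The main obstacle is the combinatorial bookkeeping needed to confirm that all genuinely higher-order derivatives actually cancel. The $u_{x^ix^j}$-terms of the form $u_{x^ix^j}\partial_{u_{x^j}}\Omega\,\partial_{u_{x^i}}H$ and $u_{x^ix^j}\partial_{u_{x^i}}\Omega\,\partial_{u_{x^j}}H$ match up by the symmetry $u_{x^ix^j}=u_{x^jx^i}$, while the putative $u_{x^ix^jx^k}$-terms, which appear both in $\mathcal{D}_{x^ix^j}\Omega\,\partial_{u_{x^ix^j}}H$ and in $\mathcal{D}_{x^i}H\,\partial_{u_{x^i}}\Omega$, cancel by the full symmetry of the third mixed partial in its three indices. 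Only after these cancellations does the resulting identity descend to a condition on $J^2$, as it must, being the determining equation of a second-order PDE.
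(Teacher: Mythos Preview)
Your approach is exactly the one indicated by the paper, which simply cites Theorem~\ref{thm:contact1} and Theorem~\ref{theorem:determining} without writing out the computation; you have supplied the missing algebra, and the regrouping via $Y^{u_{x^i}}=\mathcal{D}_{x^i}\Omega-\sum_k u_{x^ix^k}\partial_{u_{x^k}}\Omega$ and $Y^{u_{x^ix^j}}=\mathcal{D}_{x^ix^j}\Omega-\sum_k u_{x^ix^jx^k}\partial_{u_{x^k}}\Omega$ is correct.

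One small point deserves mention. In your collapse step you claim that $Y^u\partial_u H$ together with the other pieces recombines exactly into $-\sum_i\mathcal{D}_{x^i}H\,\partial_{u_{x^i}}\Omega$. If you track the $\partial_u H$ contributions, the term $-\sum_i u_{x^i}\partial_{u_{x^i}}\Omega\,\partial_u H$ from $Y^u\partial_u H$ does match the corresponding piece of $-\sum_i\mathcal{D}_{x^i}H\,\partial_{u_{x^i}}\Omega$, but the remaining summand $\Omega\,\partial_u H$ does not cancel against anything and is absent from~\eqref{eq:Omega}. In other words, the identity you derive is
\[
Y_\Omega(E)\big|_{\Delta_\mathcal{E}}=\partial_t\Omega-H\partial_u\Omega+\Omega\,\partial_u H+\sum_{i,j}\bigl(\mathcal{D}_{x^i}\Omega\,\partial_{u_{x^i}}H+\mathcal{D}_{x^ix^j}\Omega\,\partial_{u_{x^ix^j}}H-\mathcal{D}_{x^i}H\,\partial_{u_{x^i}}\Omega\bigr),
\]
which agrees with~\eqref{eq:Omega} precisely when $\partial_u H=0$. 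This is the case for the HJB Hamiltonian actually used in the paper (cf.~\eqref{eq:HJB} and~\eqref{eq:hjbH}), so the discrepancy is harmless for the applications, but your sentence ``should collapse onto the single sum'' glosses over it. Either note explicitly that $H$ is independent of $u$, or carry the extra term $\Omega\,\partial_u H$ in the statement.
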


\begin{proof}
  The statement follows directly from Theorem~\ref{thm:contact1} and Theorem~\ref{theorem:determining} (in particular equations~\eqref{eq:contact1} and~\eqref{eq:symmetry}).
\end{proof}

Let us introduce
\[ O_t = \Omega (t, X_t, U (t,X_t), \nabla U (t,X_t)), \]
where $X_t$ is a solution to equation~{\eqref{eq:sde-socp}} with respect to an
optimal control~$A_t^\ast$.

\begin{assumption}\label{ass:max1}
	There exists at least one measurable
	function $\mathcal{A} (t, x, u_{x}, u_{x x})$ such that
	\[ \mathcal{A} (t,x, u_{x}, u_{x x}) \in \arg \max \mathcal{H} (t,x,
	u_{x}, u_{x x}, \cdot), \]
	where
	\[ \mathcal{H} (t,x, u_{x}, u_{x x}, a) = \sum_{i = 1}^n \mu^i (t,x, a)
	u_{x^i} + \frac{1}{2} \sum_{i, j = 1}^n \sum_{\ell = 1}^m
	\sigma^i_{\ell} (t,x, a) \sigma^j_{\ell} (t,x, a) u_{x^i x^j} + L (t,x, a) .
	\]
\end{assumption}
As a consequence of Assumption~\ref{ass:max1}, we can choose the process $\alpha_t = \mathcal{A} (t,X_t, \nabla U (t,X_t), D^2 U (t,X_t))$ to be the optimal control provided that the solution $U$ to equation~{\eqref{eq:hjbH}} is
at least~$C^2$.

The next result is our first stochastic generalization of Noether theorem.

\begin{theorem}
  \label{theorem_noether}
  Let Assumption~\ref{ass:max1} hold true.
  Suppose that the solution $U$ to
  equation~{\eqref{eq:hjbH}} is continuously differentiable with respect to
  time and $C^2$ with respect to~$x$. If {$\Omega$} is a contact symmetry of equation~{\eqref{eq:hjbH}},
  then $O_t$ is a local martingale.
\end{theorem}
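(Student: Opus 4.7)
The plan is to apply Itô's formula to the process $O_t$ viewed as the composition $F(t,X_t)$ with $F(t,x) \assign \Omega(t, x, U(t,x), \nabla U(t,x))$, and to show that the contact symmetry condition~\eqref{eq:Omega} is exactly what is needed to make the drift vanish identically along the optimally controlled diffusion.

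First I would record the consequences of the HJB equation. Because $\partial_t U + H(t, x, \nabla U, D^2 U) = 0$, differentiating once in $x^i$ gives $\partial_t\partial_{x^i} U = -\mathcal{D}_{x^i} H$ evaluated on the $2$-jet of $U$, where $\mathcal{D}_{x^i}$ is the total derivative~\eqref{eq:mathcalD}. By the envelope property of the supremum that defines $H$, at the maximizer $\alpha^\ast = \mathcal{A}(t, x, \nabla U, D^2 U)$ we have $\partial_{u_{x^i}} H = \mu^i(t, x, \alpha^\ast)$ and $\partial_{u_{x^i x^j}} H = \tfrac{1}{2}\eta^{ij}(t, x, \alpha^\ast)$. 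Separately, from the very definition of $\mathcal{D}_{x^i}$ applied to a function on $J^1$, the space derivatives of $F$ are
\[
\partial_{x^i} F = \mathcal{D}_{x^i}\Omega, \qquad \partial_{x^i x^j} F = \mathcal{D}_{x^i x^j}\Omega,
\]
both evaluated at $(t, x, U, \nabla U, D^2 U)$, while
\[
\partial_t F = \partial_t\Omega - H\,\partial_u\Omega - \sum_{i=1}^n \partial_{u_{x^i}}\Omega\,\mathcal{D}_{x^i} H.
\]

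Applying Itô's formula to $F(t, X_t)$ along the SDE~\eqref{eq:sde-socp} driven by the control $\alpha^\ast_t = \mathcal{A}(t, X_t, \nabla U, D^2 U)$ provided by Assumption~\ref{ass:max1}, the martingale part is $\sum_{i,\ell}\int_0^t \sigma^i_\ell(s, X_s, \alpha^\ast_s)\,\mathcal{D}_{x^i}\Omega\,\mathd W^\ell_s$, and the drift equals
\[
\partial_t\Omega - H\,\partial_u\Omega - \sum_{i=1}^n \partial_{u_{x^i}}\Omega\,\mathcal{D}_{x^i}H + \sum_{i=1}^n \mu^i(t,X_t,\alpha^\ast_t)\,\mathcal{D}_{x^i}\Omega + \tfrac{1}{2}\sum_{i,j=1}^n \eta^{ij}(t,X_t,\alpha^\ast_t)\,\mathcal{D}_{x^i x^j}\Omega.
\]
Substituting the envelope identities $\mu^i(t,X_t,\alpha^\ast_t) = \partial_{u_{x^i}} H$ and $\tfrac12\eta^{ij}(t,X_t,\alpha^\ast_t) = \partial_{u_{x^i x^j}} H$ rewrites this drift as the left-hand side of~\eqref{eq:Omega}, which vanishes by the hypothesis that $\Omega$ is a contact symmetry of the HJB equation. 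Hence $O_t = F(t,X_t)$ is a continuous local martingale, as claimed.

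The main obstacle is regularity: the identity $\partial_t\partial_{x^i} U = -\mathcal{D}_{x^i} H$ formally needs one extra spatial derivative of $U$ beyond the stated $C^{1,2}$ hypothesis, and the envelope identities need measurability (but not differentiability) of $\alpha^\ast$. One would either strengthen the hypothesis on $U$ to $C^{1,3}$ in the course of the argument, or bypass differentiating the HJB equation by applying the Itô–Kunita formula (Theorem~\ref{thm:ito-kunita}) to the random field $\nabla U$ directly; all remaining manipulations are purely algebraic rearrangements matching the symmetry determining equation~\eqref{eq:Omega} with the Itô drift.
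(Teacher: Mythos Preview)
Your proposal is correct and follows essentially the same route as the paper: apply It\^o's formula to $\Omega(t,X_t,U,\nabla U)$, use the HJB equation and its $x$-derivative to eliminate $\partial_t U$ and $\partial_{t x^i}U$, invoke the envelope identities (the paper's Lemma~\ref{lemma_main}) to replace $\mu^i$ and $\tfrac12\eta^{ij}$ by $\partial_{u_{x^i}}H$ and $\partial_{u_{x^ix^j}}H$, and recognise the resulting drift as the determining equation~\eqref{eq:Omega}. Your packaging via $F(t,x)$ and the identities $\partial_{x^i}F=\mathcal{D}_{x^i}\Omega$, $\partial_{x^ix^j}F=\mathcal{D}_{x^ix^j}\Omega$ is a compact way to write what the paper spells out term by term, and your observation about needing $U\in C^{1,3}$ is exactly right---the paper in fact silently uses $U\in C^{2,3}$ in its proof.
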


\begin{remark}\label{remark:further}
The works~\cite{lescot_isovectors_2004,Lescot_Zambrini2008,thieullen_symmetries_1997} present a Noether theorem involving a time change and a Lie point transformation with a generator of the form~\eqref{eq:timetransform} for an optimal control system with affine type control and an objective function with quadratic dependence from the control.  
More precisely, they proved that, if $\tilde{\Omega}$ of the form \eqref{eq:timetransform} is a symmetry of the HJB equation, then the process
\begin{equation}\label{eq:conservationtime}
	\begin{split}
	\hat{O}_t& = \tilde{\Omega}(t,X_t,U(t,X_t),\partial_tU(t,X_t),\nabla U(t,X_t))\\
	& = -f(t)H(t,\nabla U(t,X_t), D^2 U(t,X_t))+ \Omega_{\text{Lie},f,g} (t,X_t,U(t,X_t),\nabla U(t,X_t)), 
	\end{split} 
\end{equation}
is a local martingale. 
The presence of some time invariance was essential in the papers \cite{arnaudon_stochastic_2017,privault_stochastic_2010} for extending the concept of integrable systems to the stochastic framework. 
We expect that the martingality of the process~\eqref{eq:conservationtime} holds also in the general setting presented here. 
Since it is not completely clear what it is the role of time change in our setting and if the conservation of~\eqref{eq:conservationtime} holds for more general time changes, we prefer to postpone this analysis to some later works.
\end{remark}

From now on we take $H$ as in Section~\ref{s:det-hjb}, namely, 
\[ H (t,x, u_{x}, u_{x x}) = \sup_{a \in K } \mathcal{H} (t,x, u_{x}, u_{x x}, a). \]
In order to prove Theorem~\ref{theorem_noether}, we anticipate the following result.
\begin{lemma}
  \label{lemma_main}
  We have that
  \begin{align*}
    \partial_{u_{x^i}} H & = \mu^i (t,x, \mathcal{A} (t,x, u_{x}, u_{x
    x})), \\
    \partial_{u_{x^i x^j}} H & = \frac{1}{2} \sum_{\ell=1}^m \sigma^i_{\ell}
    (t,x, \mathcal{A} (t,x, u_{x}, u_{x x})) \sigma^j_{\ell} (t,x, \mathcal{A}
    (t,x, u_{x}, u_{x x})) . 
  \end{align*}
\end{lemma}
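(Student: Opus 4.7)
My plan is to recognize Lemma~\ref{lemma_main} as a direct instance of the envelope (Danskin) theorem applied to the parametric supremum $H(t,x,u_x,u_{xx}) = \sup_{a\in K} \mathcal{H}(t,x,u_x,u_{xx},a)$, which is attained at $a = \mathcal{A}(t,x,u_x,u_{xx})$ by Assumption~\ref{ass:max1}. The argument hinges only on the optimality of $\mathcal{A}$ and on $\mathcal{H}$ being affine in its arguments $u_x$ and $u_{xx}$, so the routine algebraic part is negligible.

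The key observation is the following. Fix $(t_0,x_0,p_0,q_0)$ and set $a_0 := \mathcal{A}(t_0,x_0,p_0,q_0)\in K$. By definition of $\mathcal{A}$ and $H$ one has the equality $H(t_0,x_0,p_0,q_0) = \mathcal{H}(t_0,x_0,p_0,q_0,a_0)$, whereas for every other $(p,q)$ the inequality $H(t_0,x_0,p,q) \geq \mathcal{H}(t_0,x_0,p,q,a_0)$ holds. Thus the map $(p,q) \mapsto H(t_0,x_0,p,q) - \mathcal{H}(t_0,x_0,p,q,a_0)$ is nonnegative and vanishes at $(p_0,q_0)$, so the gradients of the two summands at $(p_0,q_0)$ must agree (whenever $H$ is differentiable there). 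Since $\mathcal{H}$ is explicitly affine in $u_x$ and $u_{xx}$, a direct inspection gives $\partial_{u_{x^i}} \mathcal{H}(\,\cdot\,,a_0) = \mu^i(t_0,x_0,a_0)$ and $\partial_{u_{x^i x^j}} \mathcal{H}(\,\cdot\,,a_0) = \tfrac{1}{2}\sum_{\ell=1}^m \sigma^i_\ell(t_0,x_0,a_0)\,\sigma^j_\ell(t_0,x_0,a_0)$, which are exactly the two claimed identities.

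The main subtlety, rather than a real obstacle, is that this envelope-type reasoning does not require any regularity of the (possibly non-unique) maximizer $\mathcal{A}$: the conclusion rests solely on the supremum being attained and on $H$ being differentiable in its derivative arguments, which is implicitly assumed whenever one writes $\partial_{u_{x^i}} H$ and $\partial_{u_{x^i x^j}} H$ as in Theorem~\ref{theorem_noether}. Should one instead assume $\mathcal{A}$ smooth, the same identities follow from the chain rule combined with the first-order optimality condition $\partial_{a^k}\mathcal{H}|_{a=\mathcal{A}} = 0$, which cancels the extra terms produced by differentiating through $\mathcal{A}$; at interior maximizers this is immediate, and at boundary points of $K$ one uses that the increments of $\mathcal{A}$ stay tangent to $K$ so that the inner product with $\partial_a \mathcal{H}|_{a=\mathcal{A}}$ still vanishes.
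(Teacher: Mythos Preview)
Your argument is correct and is precisely the envelope (Danskin) theorem that the paper invokes: the paper's proof simply observes that in the smooth case the identity follows from the first-order condition $\partial_{a^i}\mathcal{H}|_{a=\mathcal{A}}=0$, and for the general case cites the envelope theorem, which you have spelled out explicitly. So your approach coincides with the paper's, only with the envelope step made concrete rather than referenced.
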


\begin{proof}
  In the case where $\mu$, $\sigma$, and $\mathcal{A}$ are $C^1$ in all their variables, the result follows from the fact that 
  \[ \partial_{a^i} \mathcal{H}(t,x,u_x,u_{xx},\mathcal{A}(t,x,u_x,u_{x x})) =0. \]
  The general case is a consequence of Assumption~\ref{ass:max1} and the Envelope Theorem.
  For the latter we refer the reader to, e.g.,~{\cite{deCarvalho_general_2009,milgrom_envelope_2002}}.
\end{proof}

\begin{proof}[Proof of Theorem~\ref{theorem_noether}]
  We compute the differential of $O_t$ using It{\^o} formula, to get
  \begin{align*}
    \mathd O_t = & \, \mathd \Omega (t, X_t, U (t,X_t), \nabla U (t,X_t))
    \\
    = & \, \partial_t \Omega (t, X_t, U (t,X_t), \nabla U (t,X_t)) \mathd
    t + \partial_u \Omega (t, X_t, U (t,X_t), \nabla U (t,X_t)) \mathd U
    (t, X_t) \\
    & + \sum_{i = 1}^n \partial_{x^i} \Omega (t, X_t, U (t,X_t), \nabla U (t,X_t)) \mathd X^i_t \\
    & + \sum_{i = 1}^n \partial_{u_{x^i}} \Omega (t, X_t, U (t,X_t), \nabla U (t,X_t)) \mathd \partial_{x^i} U (t, X_t) \\
    & + \frac{1}{2} \sum_{i, j = 1}^n \partial_{x^i x^j} \Omega (t, X_t, U (t,X_t), \nabla U (t,X_t)) \mathd [X^i, X^j]_t \\
    & + \frac{1}{2} \partial_{u u} \Omega (t, X_t, U (t,X_t), \nabla U (t,X_t)) \mathd [U (\cdot,X_{\cdot} ), U (\cdot,X_{\cdot})]_t
    \\
    & + \frac{1}{2} \sum_{j = 1}^n \partial_{u x^j} \Omega (t, X_t, U (t,X_t), \nabla U (t,X_t)) \mathd [U (\cdot, X_{\cdot}), X^j]_t
    \\
    & + \frac{1}{2} \sum_{j = 1}^n \partial_{u u_{x^j}} \Omega (t, X_t, U (t,X_t), \nabla U (t,X_t)) \mathd [\partial_{x^j} U (\cdot,X_{\cdot}), U (\cdot,X_{\cdot})]_t \\
    & + \frac{1}{2} \sum_{i, j = 1}^n \partial_{u_{x^i} x^j} \Omega (t, X_t, U (t,X_t), \nabla U (t,X_t)) \mathd [\partial_{x^i} U (\cdot,X_{\cdot}), X^j]_t \\
    & + \frac{1}{2} \sum_{i, j = 1}^n \partial_{u_{x^i} u_{x^j}} \Omega (t, X_t, U (t,X_t), \nabla U (t,X_t)) \mathd [\partial_{x^i} U (\cdot,X_{\cdot}),
    \partial_{x^j} U (\cdot,X_{\cdot})]_t .
  \end{align*}
  Since $U \in C^{2, 3} ([0, T] \times \mathbb{R}^n, \mathbb{R})$, we
  also have
  \begin{align}
    \mathd U (t,X_t) & = \partial_t U (t,X_t) \mathd t + \sum_{i = 1}^n
    \partial_{x^i} U (t,X_t) \mathd X^i_t + \frac{1}{2} \sum_{i, j = 1}^n
    \partial_{x^i x^j} U (t,X_t) \mathd [X^i, X^j]_t, \label{eq:O2} \\
    \mathd \partial_{x^i} U (t,X_t) & = \partial_{x^i, t} U (t,X_t) \mathd
    t + \sum_{j = 1}^n \partial_{x^i x^j} U (t,X_t) \mathd X^j_t +
    \frac{1}{2} \sum_{j, k = 1}^n \partial_{x^i x^j x^k} U (t,X_t) \mathd
    [X^j, X^k]_t . \label{eq:O3} 
  \end{align}
  Exploiting equations~{\eqref{eq:O2}} and~{\eqref{eq:O3}}, the fact that $X_t$ is solution to~{\eqref{eq:sde-socp}}, and the relations
  \begin{align*}
    \mathd [X^i, X^j]_t = & \sum_{\ell = 1}^m \sigma^i_{\ell} (t,X_t,
    \alpha_t) \sigma^j_{\ell} (t,X_t, \alpha_t) \mathd t, \\
    \mathd [U (\cdot,X_{\cdot}), X^i]_t = & \sum_{j = 1}^n \sum_{\ell =
    1}^m \partial_{x^j} U (t,X_t) \sigma^j_{\ell} (t,X_t, \alpha_t)
    \sigma^i_{\ell} (t,X_t, \alpha_t) \mathd t, \\
    \mathd [U (\cdot,X_{\cdot}), U (\cdot,X_{\cdot})]_t = & \sum_{i, j =
    1}^n \sum_{\ell = 1}^m \partial_{x^j} U (t,X_t) \partial_{x^i} U (t,X_t) \sigma^j_{\ell} (t,X_t, \alpha_t) \sigma^i_{\ell} (t,X_t, \alpha_t)
    \mathd t, \\
    \mathd [U (\cdot,X_{\cdot}), \partial_{x^i} U (\cdot,X_{\cdot})]_t = &
    \sum_{k, j = 1}^n \sum_{\ell = 1}^m \partial_{x^j} U (t,X_t)
    \partial_{x^i x^k} U (t,X_t) \sigma^j_{\ell} (t,X_t, \alpha_t)
    \sigma^k_{\ell} (t,X_t, \alpha_t) \mathd t, \\
    \mathd [\partial_{x^{l}} U (\cdot,X_{\cdot}), \partial_{x^i} U (\cdot,X_{\cdot})]_t = & \sum_{k, j = 1}^n \sum_{\ell = 1}^m
    \partial_{x^l x^j} U (t,X_t) \partial_{x^i x^k} U (t,X_t)
    \sigma^j_{\ell} (t,X_t, \alpha_t) \sigma^i_{\ell} (t,X_t, \alpha_t) \mathd
    t, 
  \end{align*}
  we obtain
  \begin{equation*}
  \begin{split} 
    \mathd O_t = & \, \sum_{i, k = 1}^n \mu^i (t,X_t, \alpha_t) \left( \partial_{x^i}
    \Omega + u_{x^i} \partial_u \Omega + u_{x^i x^k} \partial_{u_{x^k}} \Omega
    \right) (t, X_t, U (t,X_t), \nabla U (t,X_t), D^2 U (t,X_t))
    \mathd t \\
    & + \frac{1}{2} \sum_{i, j, k, l = 1}^n \sum_{\ell = 1}^m
    \sigma^i_{\ell} (t,X_t, \alpha_t) \sigma^j_{\ell} (t,X_t, \alpha_t) \biggl(
    \partial_{x^i, x^j} \Omega + u_{x^j} \partial_{x^i,
    u} \Omega + u_{x^j x^k} \partial_{x^i u_{x^k}} \Omega + u_{x^i} \partial_{x^j, u} \Omega  \\
    &  + u_{x^i} u_{x^j} \partial_{u u}
    \Omega + u_{x^i} u_{x^j x^k} \partial_{u u_{x^k}} \Omega + u_{x^i x^j}
    \partial_u \Omega + u_{x^i x^k} \partial_{x^j u_{x^k}} \Omega + u_{x^i
    x^k} u_{x^j} \partial_{u_{x^k} u} \Omega \\
    & + u_{x^i x^k} u_{x^j x^{l}}
    \partial_{u_{x^k} u_{x^{l}}} \Omega + u_{x^i x^j x^k}
    \partial_{u_{x^k}} \Omega \biggr) (t, X_t, U (t,X_t), \nabla U (t,X_t), D^2 U (t,X_t), D^3 U (t,X_t)) \mathd t \\
    & + \left( \partial_t U \, \partial_u \Omega +
    \sum_{i = 1}^n \partial_{t x^i}U \, \partial_{u_{x^i}} \Omega + \partial_t \Omega \right) (t, X_t, U (t,X_t), \nabla U (t,X_t), D^2 U (t,X_t)) \mathd t + \mathd
    M_t ,
  \end{split} 
  \end{equation*}
  where $M_t$ is a local martingale. Using the explicit definition of
  $\mathcal{D}_{x^i}$, it is simple to note that
  \begin{align*}
    \mathcal{D}_{x^i} \Omega = & \, \partial_{x^i} \Omega + u_{x^i}
    \partial_u \Omega + \sum_{k = 1}^n u_{x^i x^k} \partial_{u_{x^k}} \Omega, \\
    \mathcal{D}_{x^i x^j} \Omega = & \, \partial_{x^i x^j} \Omega + u_{x^j} \partial_{x^i u} \Omega + u_{x^i} \partial_{x^j u} \Omega + u_{x^i} u_{x^j}
    \partial_{u u} \Omega + u_{x^i x^j} \partial_u \Omega 
    \\
    & + \sum_{k, l = 1}^n \Bigl( u_{x^j x^k} \partial_{x^i
   	u_{x^k}} \Omega 
    + u_{x^i} u_{x^j x^k} \partial_{u u_{x^k}} \Omega + u_{x^i x^k} \partial_{x^j
    u_{x^k}} \Omega \\ 
	& + u_{x^i x^k} u_{x^j} \partial_{u_{x^k} u} \Omega +
    u_{x^i x^k} u_{x^j x^{l}} \partial_{u_{x^k} u_{x^{l}}} \Omega +
    u_{x^i x^j x^k} \partial_{u_{x^k}} \Omega \Bigr), 
  \end{align*}
  and we have
  \[ \partial_t U = - H(t,x,\nabla U,D^2 U) \qquad \text{and} \qquad \partial_{t, x^i} U = - (\mathcal{D}_{x^i} H) (t,x,\nabla U,D^2 U, D^3U) . \]
  Using Lemma~\ref{lemma_main}, the fact that we can choose $\alpha_t = \mathcal{A}
  (t,X_t, \nabla U (t,X_t), D^2 U (t,X_t))$, and
  the determining equations~{\eqref{eq:Omega}}, we obtain
  \begin{equation*}
  \begin{split} 
    & \mathd O_t = \\
    = & \, \sum_{i = 1}^n \mu^i (t,X_t, \alpha_t) (\mathcal{D}_{x^i} \Omega) (t, X_t,
    U (t,X_t), \nabla U (t,X_t), D^2 U (t,X_t)) \mathd t \\
    & +
    \frac{1}{2} \sum_{\ell = 1}^m \sum_{i, j = 1}^n \sigma^i_{\ell} (t,X_t,
    \alpha_t) \cdot \sigma^j_{\ell} (t,X_t,\alpha_t) (\mathcal{D}_{x^i x^j} \Omega) (t, X_t, U (t,X_t), \nabla U (t,X_t), D^2 U (t,X_t), D^3 U (t,X_t))
    \mathd t \\
    & + \sum_{i = 1}^n \left( - H \partial_u \Omega - \mathcal{D}_{x^i} H
    \partial_{u_{x^i}} \Omega + \partial_t \Omega \right) (t, X_t, U (t,X_t),
    \nabla U (t,X_t), D^2 U (t,X_t)) \mathd t + \mathd M_t
    \\
    = &  \sum_{i, j = 1}^n \left( \mathcal{D}_{x^i} \Omega \partial_{u_{x^i}} H +
    \mathcal{D}_{x^i x^j} \Omega \partial_{u_{x^i x^j}} H - H \partial_u \Omega -
    \mathcal{D}_{x^i} H \partial_{u_{x^i}} \Omega + \partial_t \Omega \right)
    \\
    & (t, X_t,U (t,X_t), \nabla U (t,X_t), D^2 U (t,X_t),
    D^3 U (t,X_t)) \mathd t + \mathd M_t \\
    = & \, \mathd M_t, 
  \end{split} 
  \end{equation*}
  which concludes the proof.
\end{proof}

\subsection{The case of stochastic HJB equation}

We face the problem of stochastic HJB equation, that is, we consider, as in Section~\ref{s:stoch-hjb}, 
\begin{equation}\label{eq:HtoS}
\begin{split} 
  \mathcal{H}^S (t, x, u_{x}, u_{x x}, a, \psi_{x}) = & \, \sum_{i = 1}^n
  \mu^i (t, x, a, \omega) u_{x^i} + \frac{1}{2} \sum_{i, j = 1}^n \sum_{\ell =
  1}^m \sigma^i_{\ell} (t, x, a, \omega) \sigma^j_{\ell} (t, x, a, \omega)
  u_{x^i x^j} \\
  & + \sum_{i = 1}^n \sum_{\ell = 1}^m \sigma_{\ell}^i (t, x, a, \omega)
  \psi^{\ell}_{x^i} + L^S (t, x, a) .
\end{split} 
\end{equation}
and
\[ H^S (t, x, u_{x}, u_{x x}, \psi_{x}) = \sup_{a \in K} \mathcal{H}^S
   (t, x, u_{x}, u_{x x}, a, \psi_{x}) . \]
In this case,
\begin{equation}
  \mathd U_t (x) = - H^S (t, x, \nabla U, D^2 U,
  \nabla \Psi) \mathd t + \sum_{\ell = 1}^m \Psi^{\ell}_t
  (x) \mathd W^{\ell}_t . \label{eq:dU}
\end{equation}
Though some ideas concerning symmetries for SPDEs are discussed, e.g., in~\cite{delara_reduction_1998} and~\cite{devecchi_finite_2017}, a general theory has not been developed yet. For this reason, we extend the notion of infinitesimal symmetry introduced in Definition~\ref{definition_symmetry} in the following way. 
Hereafter, we consider the probability space $(\mathcal{W},\mathcal{F}_t,\mathbb{P})$ where $\mathcal{W}=C^0(\mathbb{R},\mathbb{R}^m)$ is the canonical space for the Brownian motion $W$, $\mathcal{F}_t$ is the natural filtration generated by $W_t$, and $\mathbb{P}$ is the Wiener measure on $\mathcal{W}$. 
\begin{definition}\label{def:contact-symmetry-for-dU}
Let $\Omega\colon \mathbb{R}_+\times J^1(\mathbb{R}^n,\mathbb{R}) \times \mathcal{W} \rightarrow \mathbb{R}$  be a predictable regular random field on $\mathbb{R}_+ \times J^1(\mathbb{R}^n,\mathbb{R})$ which is $C^1$ with respect to the time $t$ and $C^2$ in all other variables. We say that $Y_{\Omega}$ is a {\emph{contact symmetry for
  equation~{\eqref{eq:dU}}}} when we have
  \[ \partial_t \Omega - H^S \partial_u \Omega + \sum_{i,j=1}^n \left( \mathcal{D}_{x^i} \Omega \partial_{u_{x^i}} H^S + \mathcal{D}_{x^i x^j} \Omega
     \partial_{u_{x^i x^j}} H^S - \mathcal{D}_{x^i} H^S
     \partial_{u_{x^i}} \Omega \right) = 0. \]
\end{definition}

\begin{assumption}\label{ass:max2}
	There exists at least one measurable
	function $\mathcal{A}^S (t, x, u_{x}, u_{x x}, \psi_{x})$ such that
	\[ \mathcal{A}^S (t, x, u_{x}, u_{x x}, \psi_{x}) \in \arg \max
	\mathcal{H}^S (t, x, u_{x}, u_{x x}, \cdot, \psi_{x}), \]
	where $\mathcal{H}^S$ is defined by equation~{\eqref{eq:HtoS}}.
\end{assumption}

\begin{lemma}
  \label{lemma_main2}
  We have that
  \begin{align*}
    \partial_{u_{x^i}} H^S = \, & \mu^i (t, x, \mathcal{A}^S (t, x, u_{x}, u_{x
    x}, \psi_{x}), \omega), \\
    \partial_{u_{x^i x^j}} H^S = \, & \frac{1}{2} \sum_{\ell=1}^m \sigma^i_{\ell}
    (t, x, \mathcal{A}^S (x, u_{x}, u_{x x}, \psi_{x}), \omega)
    \, \sigma^j_{\ell} (t, x, \mathcal{A}^S (t, x, u_{x}, u_{x x},
    \psi_{x}), \omega), \\
    \partial_{\psi^{\ell}_{x^i}} H^S = \, & \sigma_{\ell}^i (t, x, \mathcal{A}^S
    (t, x, u_{x}, u_{x x}, \psi_{x}), \omega) . 
  \end{align*}
\end{lemma}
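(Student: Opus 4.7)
The plan is to follow verbatim the argument already used for the deterministic version, Lemma \ref{lemma_main}, and reduce the statement to a direct application of the Envelope Theorem. The extra $\psi_x$-variables introduce nothing new, because $\mathcal{H}^S$ is affine in $\psi_x$ and the maximization is only in $a$.

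The first step is the purely algebraic observation that, when $(t,x,a,\omega)$ is held fixed, the expression \eqref{eq:HtoS} is affine in each of the auxiliary variables $u_{x^i}$, $u_{x^i x^j}$, and $\psi^\ell_{x^i}$. Differentiating term by term in \eqref{eq:HtoS} therefore yields immediately
\begin{align*}
\partial_{u_{x^i}} \mathcal{H}^S &= \mu^i(t,x,a,\omega),\\
\partial_{u_{x^i x^j}} \mathcal{H}^S &= \tfrac{1}{2}\sum_{\ell=1}^m \sigma^i_\ell(t,x,a,\omega)\,\sigma^j_\ell(t,x,a,\omega),\\
\partial_{\psi^\ell_{x^i}} \mathcal{H}^S &= \sigma^i_\ell(t,x,a,\omega),
\end{align*}
so the three right-hand sides of the lemma are exactly the corresponding partial derivatives of $\mathcal{H}^S$ evaluated at $a=\mathcal{A}^S(t,x,u_x,u_{xx},\psi_x)$.

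The second step is to pass from $\partial_v \mathcal{H}^S$ to $\partial_v H^S$ for $v \in \{u_{x^i},\,u_{x^ix^j},\,\psi^\ell_{x^i}\}$. When $\mu$, $\sigma$ and the selector $\mathcal{A}^S$ are $C^1$, I would just apply the chain rule to $H^S(v)=\mathcal{H}^S(v,\mathcal{A}^S(v))$: the term coming from differentiation through $\mathcal{A}^S$ vanishes by the first-order optimality condition $\partial_a \mathcal{H}^S(\ldots,\mathcal{A}^S(\ldots),\ldots)=0$, giving the three claimed identities. In the general case (no $C^1$ regularity of $\mathcal{A}^S$), I would cite the Envelope Theorem in the form of \cite{deCarvalho_general_2009,milgrom_envelope_2002} exactly as done in the proof of Lemma \ref{lemma_main}; Assumption \ref{ass:max2} provides the measurable argmax required as hypothesis. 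Since $\omega$ enters only as a fixed parameter, the argument is pointwise in $\omega$ and no measurability subtlety appears.

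There is no real obstacle: the proof is essentially a copy of the one for Lemma \ref{lemma_main}, the only mildly new ingredient being the identification $\partial_{\psi^\ell_{x^i}}\mathcal{H}^S=\sigma^i_\ell$, which is immediate from the affine form of $\mathcal{H}^S$ in $\psi_x$. The one place that deserves a sentence of justification is why the Envelope Theorem can be applied separately in each of the three ``slots'', but this is routine because the other slots can be frozen as parameters during the differentiation.
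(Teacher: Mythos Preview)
Your proposal is correct and follows essentially the same approach as the paper, which simply says the proof is similar to that of Lemma~\ref{lemma_main} (i.e., the $C^1$ case via the first-order condition $\partial_a\mathcal{H}^S=0$ at the maximizer, and the general case via the Envelope Theorem with Assumption~\ref{ass:max2}). Your write-up is in fact more detailed than the paper's.
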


\begin{proof}
  The proof is similar to the one of Lemma~\ref{lemma_main}.
\end{proof}

The following result represents our second stochastic generalization of Noether theorem.

\begin{theorem}
  \label{theorem_noether2}
  Let Assumption~\ref{ass:max2} hold true. 
  Suppose that the solution $(U, \Psi)$ to
  equation~{\eqref{eq:dU}} is continuously differentiable with respect to
  time and $C^3$ with respect to $x$ almost surely.
  If {$\Omega$} is a contact symmetry of
  equation~{\eqref{eq:hjbH}}, then
  \begin{equation}\label{eq:tildeO}
  \begin{split} 
  \tilde{O}_t = & \, \Omega (t, X_t, U (t, X_t), \nabla  U (t, X_t) )  \\
  & - \frac{1}{2}
  \int_0^t \sum_{i,j=1}^n \sum_{\ell=1}^m \Bigl( \partial_{u u} \Omega\left(
  (\Psi^{\ell}_s)^2 +  2 u_{x^i} \sigma^i_{\ell} \Psi^{\ell}_s \right)
  - \partial_{x^i u} \Omega \sigma^i_{\ell} \Psi^{\ell} - \partial_{u_{x^i}} \Omega \partial_{x^i} \sigma^j_{\ell}
  \Psi^{\ell}_{x^j}  \\
  & + \partial_{x^i u_{x^j}} \Omega
  \sigma^i_{\ell} \Psi^{\ell}_{x^j} 
  + \partial_{u_{x^i} u_{x^j}} \Omega
  (\Psi^{\ell}_{x^i} \Psi_{x^j}^{\ell} + \sigma^i_{\ell} u_{x^i}
  \Psi^{\ell}_{x_j} + \sigma^j_{\ell} u_{x^j} \Psi^{\ell}_{x_i}) 
  \\
  & + \partial_{u u_{x^j}} \Omega \bigl( \Psi^{\ell}
  \Psi^{\ell}_{x^j} + \sigma^i_{\ell} u_{x^i} \Psi^{\ell}_{x^j} +
  \sigma^i_{\ell} u_{x^i x^j} \Psi^{\ell} \bigr) \Bigr) 
  (s, X_s, U (s, X_s), \nabla U (s, X_s))  \mathd s 
  \end{split} 
  \end{equation}
  is a local martingale.
\end{theorem}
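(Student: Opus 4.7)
The plan is to mirror the proof of Theorem~\ref{theorem_noether}, but in place of the classical It{\^o} formula I would use the It{\^o}--Kunita formula of Theorem~\ref{thm:ito-kunita}, because $U_t(x)$ and, by the second part of that theorem applied to the $x$-derivative of the SHJB equation~\eqref{eq:dU}, also $\nabla U_t(x)$, are now random fields with non-trivial martingale parts driven by $\Psi$ and $\nabla \Psi$. The finite-variation correction subtracted in the definition~\eqref{eq:tildeO} of $\tilde O_t$ is exactly what will be needed to absorb the extra quadratic covariation contributions produced by those new martingale parts, so that the drift of $\tilde O_t$ collapses onto the drift one would obtain in the purely deterministic case of Theorem~\ref{theorem_noether} and therefore vanishes by the determining equation of Definition~\ref{def:contact-symmetry-for-dU}.

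First I would use Theorem~\ref{thm:ito-kunita} to write the semi-martingale decompositions of $U(t,X_t)$ and of each $\partial_{x^i} U(t,X_t)$: from equation~\eqref{eq:dU} and its $x$-derivative, the martingale parts carry the additional $\Psi^\ell_t(X_t)$, respectively $\Psi^\ell_{x^i,t}(X_t)$, contributions on top of the familiar $\sigma^i_\ell \partial_{x^i} U$, respectively $\sigma^j_\ell \partial_{x^i x^j} U$, contributions, and the finite-variation parts pick up (a) the two mixed It{\^o}--Kunita brackets $\mathd[\Psi^\ell, X^i]_t$ and $\mathd[\Psi^\ell_{x^j}, X^i]_t$ and (b) the $dt$-pieces $-H^S$ and $-\mathcal{D}_{x^i} H^S$, respectively, that come from~\eqref{eq:dU}.

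Second I would apply the standard It{\^o} formula to $\Omega(t, X_t, U(t,X_t), \nabla U(t,X_t))$ using these decompositions. The resulting drift naturally splits into three blocks. Block~(A) collects the $\partial_t \Omega$, $\mu \cdot \nabla\Omega$, $\sigma\sigma^\top \cdot D^2\Omega$ contributions together with the $-H^S\partial_u\Omega - \mathcal{D}_{x^i} H^S \partial_{u_{x^i}}\Omega$ pieces inherited from the $dt$-parts of $\mathd U$ and $\mathd \partial_{x^i} U$; this block is formally identical to the drift computed in the proof of Theorem~\ref{theorem_noether} and, by Lemma~\ref{lemma_main2}, Assumption~\ref{ass:max2} and the contact-symmetry identity of Definition~\ref{def:contact-symmetry-for-dU}, sums to zero exactly as there. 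Block~(B) collects the quadratic covariations $\mathd[U(\cdot,X_\cdot), X^j]_t$, $\mathd[\partial_{x^i} U(\cdot,X_\cdot), X^j]_t$, $\mathd[U(\cdot,X_\cdot),U(\cdot,X_\cdot)]_t$, $\mathd[U(\cdot,X_\cdot), \partial_{x^j} U(\cdot,X_\cdot)]_t$ and $\mathd[\partial_{x^i} U(\cdot,X_\cdot), \partial_{x^j} U(\cdot,X_\cdot)]_t$, each of which further decomposes into a ``deterministic'' sub-block, already present in Theorem~\ref{theorem_noether}, and a genuinely new ``$\Psi$-dependent'' sub-block. Block~(C) consists of the It{\^o} integrals against $\mathd W^\ell$, which form a local martingale by construction.

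The hard part is the bookkeeping step: checking that the $\Psi$-dependent sub-block of~(B) reproduces, term by term and with the correct signs and symmetrization factors, precisely the integrand subtracted from $\Omega(t,X_t,U,\nabla U)$ in~\eqref{eq:tildeO}. There are on the order of ten distinct cross-variation terms to track, each of which must be paired with a specific partial derivative of $\Omega$ appearing in the correction; no new analytic input beyond Lemma~\ref{lemma_main2}, the It{\^o}--Kunita formula, and the symmetry condition of Definition~\ref{def:contact-symmetry-for-dU} is needed. Once this matching is verified, $\tilde O_t$ has no finite-variation part and is therefore a local martingale, which concludes the argument.
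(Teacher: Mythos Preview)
Your proposal is correct and follows essentially the same route as the paper: apply the It\^o--Kunita formula to obtain the semi-martingale decompositions of $U(t,X_t)$ and $\partial_{x^i}U(t,X_t)$, feed these into the It\^o expansion of $\Omega(t,X_t,U,\nabla U)$, invoke Lemma~\ref{lemma_main2} together with the determining equation of Definition~\ref{def:contact-symmetry-for-dU} to kill the ``deterministic'' drift, and identify the surviving $\Psi$-dependent drift with the integrand subtracted in~\eqref{eq:tildeO}. One small caveat in your organization: Block~(A) does not vanish on its own, since the $\mathcal{D}_{x^i}H^S$ appearing in the $\mathd t$-part of $\mathd\partial_{x^i}U$ includes the chain-rule contribution through $\psi_x=\nabla\Psi$ (via $\partial_{\psi^\ell_{x^j}}H^S=\sigma^j_\ell$ from Lemma~\ref{lemma_main2}), whereas the $\mathcal{D}_{x^i}H^S$ in the determining equation does not; the residual $\partial_{u_{x^i}}\Omega\,\sigma^j_\ell\Psi^\ell_{x^ix^j}$-type terms must be cancelled against the It\^o--Kunita bracket contributions you put in the finite-variation parts---this is exactly the extra identity the paper records before reaching its final expression for $\mathd O_t$.
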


\begin{proof}
  Since the proof is similar to the one of Theorem~\ref{theorem_noether}, we report
  here only some steps of the proof. By Theorem~\ref{thm:ito-kunita}, we have
  \begin{equation}\label{eq:ito-U}
  \begin{split} 
    \mathd U (t, X_t) = & - H^S (t, X_t, \nabla U (t, X_t), D^2 U (t, X_t), \nabla \Psi_t (X_t)) \mathd t \\
    & + \sum_{\ell = 1}^m \Psi^{\ell}_t (X_t) \mathd W^{\ell}_t +
    \sum_{\ell = 1}^m \sum_{i = 1}^n \partial_{x^i} \Psi^{\ell} (X_t)
    \mathd [W^{\ell}, X^i]_t \\
    & + \sum_{i = 1}^n \partial_{x^i} U (t, X_t) \mathd X^i_t +
    \frac{1}{2} \sum_{i, j = 1}^n \partial_{x^i x^j} U (t, X_t) \mathd [X^i,
    X^j]_t, 
    \end{split} 
    \end{equation} 
    and
    \begin{equation}\label{eq:ito-partial-U}
    \begin{split} 
    \mathd \partial_{x^k} U (t, X_t) = & - \mathcal{D}_{x^k}H^S (t, X_t, \nabla U (t, X_t), D^2 U (X_t, t), \nabla \Psi_t (X_t)) \mathd t
    \\
    & + \sum_{\ell = 1}^m \partial_{x^k} \Psi^{\ell}_t (X_t) \mathd
    W^{\ell}_t + \sum_{i = 1}^n \sum_{\ell = 1}^m \partial_{x^i x^k}
    \Psi^{\ell}_t (X_t) \mathd [W^{\ell}, X^i]_t \\
    & + \sum_{i = 1}^n \partial_{x^i x^k} U (t, X_t) \mathd X^i_t +
    \frac{1}{2} \sum_{i, j = 1}^n \partial_{x^i x^j x^k} U (t, X_t) \mathd [X^i,
    X^j]_t . 
  \end{split} 
  \end{equation}
  Writing, as usual, ${O}_t = \Omega (t, X_t, U (t, X_t), \nabla U (t, X_t))$ and $\alpha_t = \mathcal{A}^S (t, X_t, \nabla U(t,X_t), D^2 U(t,X_t), \nabla \Psi_t (X_t))$, we
  have
  \[ \begin{split}
       \mathd {O}_t = & \, \sum_{i=1}^n \mu^i (t, X_t, \alpha_t) \partial_{x^i} \Omega (t, X_t, U (t, X_t), \nabla U (t, X_t)) \mathd t\\
       & + \frac{1}{2} \sum_{i,j=1}^n \sum_{\ell=1}^m \sigma^i_{\ell} (t, X_t, \alpha_t)
       \sigma^j_{\ell} (t, X_t, \alpha_t) \partial_{x^i x^j} \Omega (t, X_t, U (t, X_t), \nabla U (t, X_t)) \mathd t\\
       & + \partial_u \Omega \mathd U (t, X_t) + \frac{1}{2} \partial_{u u}
       \Omega \mathd [U, U] + \sum_{i=1}^n \partial_{x^i u} \Omega \mathd [X^i, U] + \sum_{i=1}^n \partial_{u_{x^i}} \Omega \mathd \partial_{x^i}U (t,X_t) \\
       & + \frac{1}{2} \sum_{i,j=1}^n
       \partial_{u_{x^i} u_{x^j}} \Omega \mathd [\partial_{x^i}U,\partial_{x^j}U] +
       \sum_{j=1}^n \partial_{u u_{x^j}} \Omega \mathd [U, \partial_{x^j}U] + \sum_{i,i=1}^n \partial_{x^i u_{x^j}} \Omega \mathd [X^i, \partial_{x^j}U]\\
       & + \partial_t \Omega \mathd t + \mathd \tilde{M}_t.
  \end{split} \]
  Plugging in equations~\eqref{eq:ito-U} and~\eqref{eq:ito-partial-U}, and exploiting Theorem~\ref{thm:ito-kunita} in order to compute the quadratic variations, we get
  \begin{align*} 
       \mathd {O}_t = & \, \sum_{i = 1}^n \mu^i (t, X_t, \alpha_t) \left( 
       \partial_{x^i} \Omega + u_{x^i} \partial_u \Omega + \sum_{k = 1}^n u_{x^i x^k}
       \partial_{u_{x^k}} \Omega \right) (t, X_t, U (t, X_t), \nabla U (t, X_t), D^2 U (t, X_t)) \mathd t\\
       & + \frac{1}{2} \sum_{i, j = 1}^n \sum_{\ell = 1}^m \sigma^i_{\ell}
       (t, X_t, \alpha_t) \sigma^j_{\ell} (t, X_t, \alpha_t) \Biggl( \sum_{k, l = 1}^n
       \partial_{x^i x^j} \Omega + u_{x^j} \partial_{x^i u} \Omega + u_{x^j
       x^k} \partial_{x^i u_{x^k}} \Omega + u_{x^i} \partial_{x^j u} \Omega
       \\
       & + u_{x^i} u_{x^j} \partial_{u, u} \Omega + u_{x^i} u_{x^j
       x^k} \partial_{u u_{x^k}} \Omega + u_{x^i x^j} \partial_u \Omega +
       u_{x^i x^k} \partial_{x^j u_{x^k}} \Omega + u_{x^i x^k} u_{x^j}
       \partial_{u_{x^k} u} \Omega \\
       & + u_{x^i x^k} u_{x^j x^{l}}
       \partial_{u_{x^k} u_{x^{l}}} \Omega + u_{x^i x^j x^k}
       \partial_{u_{x^k}} \Omega \Biggr) (t, X_t, U (t, X_t), \nabla U (t, X_t), D^2 U (t, X_t), D^3 U (t, X_t)) \mathd t \\
       & - \partial_u \Omega (t, X_t, U (t, X_t),
       \nabla U (t, X_t)) H^S (t, X_t, U (t, X_t), \nabla U (t, X_t),
       D^2 U (t, X_t), \nabla \Psi_t (X_t)) \mathd t\\
       & + \frac{1}{2} \partial_{u u} \Omega (t, X_t, U (t, X_t), \nabla U (t, X_t)) \sum_{i = 1}^n \sum_{\ell = 1}^m \left( \Psi^{\ell}_t
       (x)^2 + 2 \partial_{x^i} U (t, x) \sigma^i_{\ell} (x, a)
       \Psi^{\ell}_t (x) \right) (t, X_t, \alpha_t) \mathd t\\
       & + \sum_{i = 1}^n \sum_{\ell = 1}^m \partial_{x^i u} \Omega (t,
       X_t, U (t, X_t), \nabla U (t, X_t)) \sigma^i_{\ell} (X_t, \alpha_t)
       \Psi^{\ell}_t (X_t) \mathd t\\
       & - \sum_{i = 1}^n \left( \partial_{u_{x^i}} \Omega D_{x^i}
       H^S \right) (t, X_t, U (t, X_t), \nabla U (t, X_t), D^2 U (t, X_t), \nabla \Psi_t (X_t)) \mathd t\\
       & + \sum_{i, k = 1}^n \sum_{\ell = 1}^m \left(
       \partial_{u_{x^i}} \Omega \sigma_{\ell}^k \partial_{x^i x^k}
       \Psi_t^{\ell} + \partial_u \Omega \sigma^i_{\ell}
       \Psi^{\ell}_{x^i} \right) (t, X_t, U (t, X_t), \nabla U (t, X_t), \alpha_t) \mathd t\\
       & + \sum_{i, j = 1}^n \sum_{\ell = 1}^m \left( \partial_{u_{x^i}
       u_{x^j}} \Omega (\partial_{x^i} \Psi^{\ell}_t \partial_{x^j}
       \Psi_t^{\ell} + \sigma^i_{\ell} u_{x^i} \partial_{x^j}
       \Psi^{\ell} + \sigma^j_{\ell} u_{x^j} \partial_{x^i} \Psi^{\ell})
       \right)  (t, X_t, U (t, X_t), \nabla U (t, X_t), \alpha_t) \mathd t \\
       & + \sum_{i, j = 1}^n \sum_{\ell = 1}^m \left( 
       \partial_{x^i u_{x^j}} \Omega \sigma^i_{\ell} \Psi^{\ell}_{x^j}
       \right) (t, X_t, U (t, X_t), \nabla U (t, X_t), \alpha_t) \mathd t \\
       & + \sum_{j = 1}^n \sum_{\ell = 1}^m \left( \partial_{u u_{x^j}}
       \Omega (\Psi^{\ell}_t \partial_{x^j} \Psi^{\ell}_t +
       \sigma^i_{\ell} u_{x^i} \partial_{x^j} \Psi^{\ell}_t +
       \sigma^i_{\ell} u_{x^i x^j} \Psi^{\ell}_t) \right) (t, X_t, U (t, X_t), \nabla U (t, X_t), \alpha_t) \mathd t\\
       & + \mathd \tilde{M}_t
     \end{align*} 
  Notice that, by Definition~\ref{def:contact-symmetry-for-dU}, we have
  \[
       0 = \partial_u \Omega H^S + \sum_{i, j = 1}^n  \left( \partial_{u_{x^i}} \Omega
       \mathcal{D}_{x^i} H^S - \partial_{u_{x^i}} H^S \mathcal{D}_{x^i} \Omega - \partial_{u_{x^i}
       u_{x^j}} H^S \mathcal{D}_{x^i x^j} \Omega \right),
  \]
  which, by Lemma~\ref{lemma_main2}, is equivalent~to
  \begin{multline*}
       \partial_u \Omega H^S + \sum_{i, j = 1}^n \left( \partial_{u_{x^i}} \Omega
       \mathcal{D}_{x^i} H^S - \partial_{u_{x^i}} H^S \mathcal{D}_{x^i} \Omega -
       \partial_{u_{x^i} u_{x^j}} H^S \mathcal{D}_{x^i x^j} \Omega \right) =\\
       = \sum_{i, j = 1}^n \sum_{\ell = 1}^{m} \left( \partial_{u_{x^i}} \Omega (\partial_{x^i}
       \sigma^j_{\ell} \Psi^{\ell}_{x^j} + \sigma^j_{\ell}
       \Psi^{\ell}_{x^j x^i}) + \partial_u \Omega (\sigma^i_{\ell}
       \Psi^{\ell}_{x^i}) \right) . 
  \end{multline*}
  Then we obtain 
  \begin{align*} 
  \mathd {O}_t
  = & \, \frac{1}{2} \partial_{u, u} \Omega (t, X_t, U (t, X_t), \nabla U (t, X_t)) \sum_{i = 1}^n \sum_{\ell = 1}^m \left( (\Psi^{\ell}_t)^2 +
  2 u_{x^i} \sigma^i_{\ell} \Psi^{\ell}_t \right) (t, X_t, U (t, X_t), \nabla U (t, X_t)) \mathd t\\
  & + \sum_{i = 1}^n \sum_{\ell = 1}^m \left( \partial_{x^i u} \Omega
  \sigma^i_{\ell} \Psi^{\ell} \right) (t, X_t, U (t, X_t), \nabla U (t, X_t)) \mathd t + \sum_{i, j \equallim 1}^n \sum_{\ell = 1}^m
  \Bigl( \partial_{u_{x^i} u_{x^j}} \Omega \, \times
  \\
  & \times (\partial_{x^i} \Psi^{\ell}_t
  \partial_{x^j} \Psi_t^{\ell} + \sigma^i_{\ell} u_{x^i}
  \partial_{x^j} \Psi^{\ell}_t + \sigma^j_{\ell} u_{x^j}
  \partial_{x^i} \Psi^{\ell}_t) \Bigr) (t, X_t, U (t, X_t),
  \nabla U (t, X_t), \alpha_t) \mathd t\\
  & + \sum_{i, j = 1}^n \sum_{\ell = 1}^m \left( - \partial_{u_{x^i}}
  \Omega \partial_{x^i} \sigma^j_{\ell} \partial_{x^j} \Psi^{\ell}_t
  + \partial_{x^i u_{x^j}} \Omega \sigma^i_{\ell} \partial_{x^j}
  \Psi^{\ell}_t \right) (t, X_t, U (t, X_t), \nabla U (t, X_t),
  \alpha_t) \mathd t\\
  & + \sum_{i, j = 1}^n \sum_{\ell = 1}^m \left( \partial_{u u_{x^j}}
  \Omega (\Psi^{\ell} \partial_{x^j} \Psi^{\ell}_t +
  \sigma^i_{\ell} u_{x^i} \partial_{x^j} \Psi^{\ell}_t +
  \sigma^i_{\ell} u_{x^i x^j} \Psi^{\ell}_t) \right)(t, X_t, U (t, X_t), \nabla U (t, X_t), \alpha_t) \mathd t \\
  & +
  \mathd \tilde{M}_t.
  \end{align*}
  Following then the same steps as in the proof of Theorem~\ref{theorem_noether} we get the result.
\end{proof}

\begin{corollary}\label{cor:noether2}
  Suppose that $\Omega$ is a Lie point symmetry of the form 
  \[ \Omega (t,x, u, u_x) = c u + g (t,x) - \sum_{k = 1}^n f^k (t,x) u_{x^k}, \]
  where $c \in \mathbb{R}$ and $f^k, g \colon \mathbb{R}^{n+1} \rightarrow \mathbb{R}$
  are smooth functions such that, for $j = 1, \ldots, n$ and $\ell=1,\ldots,m$, 
  \[ \sum_{k = 1}^n \left( f^k \partial_{x^k} \sigma_{\ell}^j - \sigma_{\ell}^k
     \partial_{x^k} f^j \right) = 0. \]
  Then $O_t = \Omega (t, X_t, \nabla U (t, X_t))$ is a local martingale.
\end{corollary}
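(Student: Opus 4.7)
The strategy is to apply Theorem~\ref{theorem_noether2} directly and show that, under the stated hypotheses, the integral correction term in~\eqref{eq:tildeO} vanishes identically, so that $\tilde{O}_t$ reduces to $O_t$ and inherits its local martingale property.

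First I would compute the partial derivatives of $\Omega(t,x,u,u_x) = cu + g(t,x) - \sum_k f^k(t,x) u_{x^k}$ that appear in~\eqref{eq:tildeO}. Since $\Omega$ is affine in $u$ and linear in the components of $u_x$, the only non-vanishing first derivatives in those variables are $\partial_u \Omega = c$ and $\partial_{u_{x^i}} \Omega = -f^i$, while the non-vanishing second derivatives are exclusively the mixed ones $\partial_{x^i u_{x^j}} \Omega = -\partial_{x^i} f^j$. In particular, $\partial_{uu}\Omega$, $\partial_{x^i u}\Omega$, $\partial_{u u_{x^j}}\Omega$ and $\partial_{u_{x^i} u_{x^j}}\Omega$ all vanish.

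Substituting these identities into the integral appearing in~\eqref{eq:tildeO}, all terms involving the $\Psi^{\ell}$'s multiplied by the vanishing second derivatives drop out. What remains in the integrand is exactly
\[
-\frac{1}{2}\sum_{i,j=1}^n \sum_{\ell=1}^m \left(-\partial_{u_{x^i}}\Omega\,\partial_{x^i}\sigma^j_\ell\,\Psi^{\ell}_{x^j} + \partial_{x^i u_{x^j}}\Omega\,\sigma^i_\ell\,\Psi^{\ell}_{x^j}\right) = -\frac{1}{2}\sum_{j,\ell}\Psi^{\ell}_{x^j}\sum_{i=1}^n\bigl(f^i\partial_{x^i}\sigma^j_\ell - \sigma^i_\ell \partial_{x^i} f^j\bigr).
\]
By the standing hypothesis $\sum_{k}(f^k\partial_{x^k}\sigma^j_\ell - \sigma^k_\ell \partial_{x^k} f^j)=0$ (with $k$ renamed to $i$), the inner sum is zero for every $j$ and every $\ell$, so the whole integrand is identically zero.

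Thus $\tilde{O}_t = \Omega(t,X_t,U(t,X_t),\nabla U(t,X_t)) = O_t$, and since Theorem~\ref{theorem_noether2} guarantees that $\tilde{O}_t$ is a local martingale, so is $O_t$. I do not expect any substantive obstacle here: the proof is a direct algebraic reduction of Theorem~\ref{theorem_noether2}, where the only delicate point is verifying that the two surviving terms among the many ones in~\eqref{eq:tildeO} combine precisely into the Lie bracket-type condition imposed on $f$ and $\sigma$.
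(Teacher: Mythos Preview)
Your proposal is correct and follows essentially the same approach as the paper's own proof: both invoke Theorem~\ref{theorem_noether2} and verify that, for the given Lie point form of~$\Omega$, the integral correction in~\eqref{eq:tildeO} vanishes because the second derivatives $\partial_{uu}\Omega$, $\partial_{x^i u}\Omega$, $\partial_{u u_{x^j}}\Omega$, $\partial_{u_{x^i}u_{x^j}}\Omega$ all vanish and the two surviving terms cancel thanks to the Lie-bracket hypothesis on $f$ and~$\sigma$. Your write-up is in fact more explicit than the paper's (which compresses the argument to a single displayed line), but the logic is identical.
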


\begin{proof}
  Under the previous conditions, we have
  \[ \sum_{j = 1}^n \left( - \partial_{u_{x^j}} \Omega \partial_{x^j}
     \sigma^i_{\ell} \right) + \partial_{x^i u_{x^j}} \Omega
     \sigma^i_{\ell} = \partial_{x^i u} \Omega \sigma^i_{\ell} = 0. \]
  The thesis follows from Theorem~\ref{theorem_noether2}.
\end{proof}

\section{Merton's optimal portfolio problem}\label{s:merton} 

In this section, we propose a symmetry analysis of Merton's problem of optimal portfolio selection (see the original paper~\cite{Merton1969lifetime} and~\cite{rogers_optimal_2013} for a review on the subject).
Let us consider a set of controls $ \alpha_t=(c(t),\gamma(t))$ and a controlled diffusion dynamics described by the~SDE 
\begin{equation}\label{eq:mainMerton1}
	\mathd X_t=\bigl((\gamma(t)(\mu(t)-r)+r) X_t -c(t)\bigr) \mathd t + X_t \gamma(t) \sigma(t) \mathd W_t,
\end{equation} 
where $X$ is the wealth process controlled by the proportion $\gamma(t) \in [0,1]$ invested in the risky asset at time $t$ and by the consumption $c(t)\in [0,+\infty)$ per unit time at time~$t$. Moreover, $r$ is the constant interest rate, and $\mu(t), \sigma(t) >0$ are continuous functions such that $\sigma(t)>\epsilon>0$ (or in the case of Subsection \ref{subsection:nonmarkovian} are general continuous predictable stochastic processes).
Fixing some finite time horizon $ T>0 $, the problem of choosing optimal portfolio selection consists in maximizing the objective functional
\begin{equation*}
	\mathbb{E}\left[\int_t^TL(t,\alpha_t) \mathd s+ g(X_T) \right],
\end{equation*}
where
\[
L(t,\alpha_t)= e^{-\rho t} V(c(t)).
\]
Here, $\rho \in (0,+\infty)$ is the discount rate, $V$ is a strictly concave utility function and $g$ is a given function.

Let us remark that the set $K$ introduced in Section~\ref{s:det-hjb} here has the form
\[ K = [0,+\infty) \times [0,1]. \]

\subsection{Markovian case}\label{subsection:Markovian}

The maximization problem introduced above is a particular case  of the general one studied in Section~\ref{s:det-hjb}. 
The associated value function~is 
\[
U(t,x)=\max_{\alpha \in \mathcal{K}_L}\mathbb{E}\left[\int_t^T L(s,\alpha_s) \mathd s+ g(X_T) \, \bigg\vert \, X_t = x \right],
\]
while the HJB equation becomes
\begin{equation*}
\partial_{t}U+\max_{(c,\gamma) \in K}\left\{\mathcal{H}(t,x,\nabla U,D^2U ,(c,\gamma) )\right\}=0,
\end{equation*}
with
\[
\mathcal{H}(t,x,u_x,u_{xx},(c,\gamma))=\exp(-\rho t)V(c)+u_x(\gamma (\mu(t)-r)+r)x-u_x c+\frac{1}{2}u_{xx} \sigma(t)^2 \gamma^2 x^2.
\]

The optimal value $(c^\star,\gamma^\star)$ of $(c,\gamma)$ is given by the solutions to the system
\begin{align*}
\partial_{c}\mathcal{H}=\exp(-\rho t)V^\prime(c)-u_x & =0, \\
\partial_{\gamma}\mathcal{H}=(\mu(t)-r)xu_x+u_{xx}\sigma^2(t) x^2 \gamma & =0,
\end{align*}
that is
\begin{align}
c^\star (t) & = (V^\prime)^{-1}(u_x\exp(\rho t)),\label{eq:cstar}\\
\gamma^\star (t) & =- \frac{(\mu(t)-r)u_x}{x u_{xx} \sigma^2(t)}.\label{eq:gammastar}
\end{align}
The corresponding functional   $\mathcal{H}  $ takes the form
\begin{equation*}
\begin{split} 
H(t,x,u_x,u_{xx})= \, & \mathcal{H}(t,x,u_x,u_{xx},(c^\star(t),\gamma^\star(t)) ) \\
= \, & \exp(-\rho t) V(c^\star(t)) + u_x(\gamma^\star(t)(\mu(t)-r)+r) x-u_x c^\star(t)
+\frac{1}{2}u_{xx} \sigma^2(t) \gamma^\star(t)^2 x^2 \\
= \, &\exp(-\rho t)V\left( (V^\prime)^{-1}(u_x\exp(\rho t)) \right) - \left(\frac{(\mu(t)-r)u_x}{x u_{xx} \sigma^2(t)} (\mu(t)-r)+r \right) xu_x \\
&-u_x(V^\prime)^{-1}(u_x\exp(\rho t))    +\frac{1}{2}u_{xx} \sigma^2 \frac{(\mu(t)-r)^2u^2_x}{x^2 u^2_{xx} \sigma^4(t)}   x^2 \\
= \, &\exp(-\rho t)V\left((V^\prime)^{-1}(u_x\exp(\rho t))\right) - \left( \frac{(\mu(t)-r)u_x}{x u_{xx} \sigma^2(t)} (\mu(t)-r)+r \right) xu_x \\
&-u_x(V^\prime)^{-1}(u_x\exp(\rho t))    +\frac{1}{2} \frac{(\mu(t)-r)^2u^2_x}{ u_{xx} \sigma^2(t)} .
\end{split} 
\end{equation*}
So we study the following PDE
\begin{equation}
u_t -\frac{\delta(t)}{2}  \frac{u_x^2}{u_{x x}} + K (t,x,u_x) = 0, \label{eq:main}
\end{equation}
with 
\begin{align}
K(t,x,u_x) = \, & h_V (t,u_x)+ r x u_x , \label{eq:MertonK}\\
\delta(t) = \, & \frac{(\mu(t)-r)^2}{\sigma^2(t)}, \nonumber 
\end{align}
where
\[h_V(t,u_x)=\exp(-\rho t)V\left( (V^\prime)^{-1}(u_x\exp(\rho t)) \right) -u_x(V^\prime)^{-1}(u_x\exp(\rho t)).\]
We are looking for the symmetry generated by the generating function $\Omega (t, x, u, u_x)$.
Hereafter, we assume that the function $h_V$ defined above is a smooth function in a suitable open subset of $\mathbb{R}^2$. 

\begin{theorem}\label{theorem:merton1}
The function $\Omega$ generates a contact symmetry of equation~\eqref{eq:main} if and only if it admits one of the following forms
\begin{align*}
\Omega_1 & = \exp \left( -\frac{r(r-1)t}{2} \right) [u\cdot u_x^r-x u_x^{r+1}] + G_1 (t,u_x), \\
\Omega_2 & =-u+ G_2 (t,u_x), 
\\
\Omega_3 & =\exp(r t) xu_x  + G_3
(t,u_x),\\
\Omega_4 &= G_4(t,u_x),
\end{align*}
where $G_1,G_2,G_3,G_4 \colon \mathbb{R}_+ \times \mathbb{R} \rightarrow \mathbb{R}$ are smooth functions satisfying the~PDEs
\begin{align}
2\exp\left(\frac{-r(r-1)t}{2}\right)u_x^{r} h_V+\delta(t) u_x^2 \partial_{u_xu_x}G_1+2\partial_tG_1 & = 0,\label{eq:K1}\\
2u_x\partial_{u_x}h_V-2 h_V+\delta(t) u_x^2  \partial_{u_xu_x}G_2+2\partial_tG_2 & = 0,\label{eq:K2}\\
2 \exp(r t) u_x \partial_{u_x}h_V+2 x r\exp(r t) u_x + \delta(t) u_x^2  \partial_{u_xu_x}G_3+2\partial_tG_3 & = 0,\label{eq:K3}\\
\delta(t)u_x^2  \partial_{u_xu_x}G_4+2\partial_tG_4 & = 0.\label{eq:K4}
\end{align}
\end{theorem}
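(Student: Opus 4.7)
The plan is to apply the determining equation stated just before Assumption~\ref{ass:max1} to the evolution PDE~\eqref{eq:main} with Hamiltonian $H(t,x,u_x,u_{xx})=-\tfrac{\delta(t)}{2}\tfrac{u_x^2}{u_{xx}}+h_V(t,u_x)+r\,x\,u_x$, and to exploit the fact that $\Omega$ depends only on $(t,x,u,u_x)$. Since $H$ is rational in $u_{xx}$ with denominator $u_{xx}^2$, multiplying the determining equation by $u_{xx}^2$ produces a polynomial identity in the free jet coordinates $u_{xx}$ and $u_{xxx}$, and each coefficient must vanish separately. The $u_{xxx}$-dependent parts cancel automatically, the contributions coming from $\mathcal{D}_{xx}\Omega\cdot\partial_{u_{xx}}H$ and $\mathcal{D}_xH\cdot\partial_{u_x}\Omega$ being opposite by the contact structure; what survives is a hierarchy of three equations, one for each degree $-2,-1,0$ in $u_{xx}$.

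Working in the adapted coordinates $(t,x,\xi,p):=(t,x,u-xu_x,u_x)$, in which the characteristic operator $\partial_x+p\,\partial_u$ becomes simply $\partial_x$, the first two (purely geometric, $h_V$- and $r$-independent) equations trivialise. The $u_{xx}^{-2}$ equation reduces to $\partial_{xx}\Omega=0$, forcing $\Omega$ to be affine in $x$; the $u_{xx}^{-1}$ equation then constrains the $x$-linear part of $\Omega$ to have the precise shape $C(t)\,p\,x$. Combining the two reductions gives the ansatz
$$ \Omega(t,x,u,u_x)=A(t,p)+B(t,p)\,\xi+C(t)\,x\,p $$
for three unknown functions $A$, $B$, $C$, with $\xi=u-xp$ and $p=u_x$.

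Substituting this ansatz into the remaining $u_{xx}^0$ equation yields a polynomial identity in $x$ and $\xi$ that decouples by coefficient: the $x^2$-term vanishes as soon as the $\xi$-dependence of $\alpha:=A+B\xi$ is affine; the $x^1$-term imposes the linear coupling $p\,C'(t)=\delta(t)\,p^2\,B_p(t,p)$; the $x^0\xi^1$-term is a heat-type linear PDE for $B$ alone with drift proportional to $r$ and diffusion $\tfrac{\delta(t)p^2}{2}$; and the remaining $x^0\xi^0$-term is an inhomogeneous linear PDE for $A$ whose source involves $p\,h_V'(t,p)\,C(t)$ and $h_V(t,p)\,B(t,p)$. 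The coupled homogeneous system for the pair $(B,C)$ is essentially finite-dimensional and, after a short case analysis, splits into exactly four elementary branches, which—identifying $A$ with the corresponding remainder $G_i$—match the four $(B,C)$-combinations encoded in $\Omega_1,\Omega_2,\Omega_3,\Omega_4$. Inserting each branch into the inhomogeneous equation for $A$ then produces the four PDEs~\eqref{eq:K1}--\eqref{eq:K4}, completing the necessity direction. The converse (sufficiency) is an immediate substitution check, using the same decomposition.

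The main obstacle is not conceptual but combinatorial: the determining equation, before any simplification, contains on the order of fifteen terms whose mutual cancellations are easy to mis-tally—in particular the $h_V'$-pieces coming from $\partial_{u_x}H$ cancel against those in $\mathcal{D}_xH$, and analogously for the $rx$-pieces. The key technical device that contains this complexity is the change of variable to $(t,x,\xi,p)$, which trivialises the two geometric constraints and reduces the residual problem to a finite enumeration of the two-dimensional linear system for $(B,C)$ together with a single inhomogeneous linear PDE for $A$.
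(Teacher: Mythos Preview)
Your overall strategy coincides with the paper's: both separate the master determining equation~\eqref{eq:Omega} by powers of $u_{xx}$ into three sub-equations, use the two purely geometric ones to force $\Omega$ into the affine shape $f_1(t,u_x)\,u+f_2(t,u_x)\,x+f_3(t,u_x)$, and then substitute into the remaining equation to read off the residual PDEs. Your Legendre-type change $(t,x,\xi,p)=(t,x,u-xu_x,u_x)$ is a genuine simplification over the paper's route: where the paper reaches $\partial_{uu}\Omega=\partial_{ux}\Omega=\partial_{xx}\Omega=0$ by a chain of cross-differentiations (equations~\eqref{eq:determining4}--\eqref{eq:determining6}), in your coordinates the $u_{xx}^{-2}$ equation is literally $\tilde\Omega_{xx}=0$ and the $u_{xx}^{-1}$ equation directly fixes the $x$-linear part to be $C(t)\,p\,x$.

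The gap lies in the last step. You assert that the coupled $(B,C)$ system ``after a short case analysis splits into exactly four elementary branches'' matching $\Omega_1,\dots,\Omega_4$, but you neither write down nor solve that system. This is precisely where the paper does all of its concrete work---equations~\eqref{eq:determining9}--\eqref{eq:determining14} together with the integration of the ODEs for $b,d,g_1$---and where the specific prefactors $e^{-r(r-1)t/2}$ and $e^{rt}$ are supposed to appear. Notice, in particular, that your $x^1$-constraint $C'(t)=\delta(t)\,p\,B_p(t,p)$ forces $p\,B_p$ to be a function of $t$ only, which at face value yields $B(t,p)=\gamma(t)\log p+e(t)$ rather than a power $p^r$; reconciling this with the heat-type $\xi^1$-equation and obtaining the claimed branches is not a ``short case analysis'' that can be skipped. (There is also a small presentational slip: you state the affine-in-$\xi$ ansatz $A+B\xi$ \emph{before} noting that it is the $x^2$-coefficient of the degree-$0$ equation that forces it; the first two equations alone leave $F(t,\xi,p)$ unconstrained in $\xi$.) Until you actually carry out the enumeration, the heart of the classification---the part that distinguishes the four $\Omega_i$ and produces \eqref{eq:K1}--\eqref{eq:K4}---is missing from your argument.
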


Finally, Theorem~\ref{theorem:merton1} and Theorem~\ref{theorem_noether} allow us to obtain the explicit forms of the local martingales of Merton's model.

\begin{corollary} \label{cor:merton}
Let $U(t,x)$ be a classical solution to equation~\eqref{eq:main} and let $X_t$ be the solution to equation~\eqref{eq:mainMerton1} with $(\gamma,c)$ satisfying the equalities~\eqref{eq:cstar} and~\eqref{eq:gammastar}. Then, the processes
\begin{align*}
O_{1,t} & = \exp \left( -\frac{r(r-1)t}{2} \right) [U(t,X_t)\partial_xU(t,X_t)^r-\partial_xU(t,X_t)^{r+1}] + G_1 (t,\partial_xU(t,X_t)),  \\
O_{2,t} & =U(t,X_t)+ G_2 (t,\partial_xU(t,X_t)), 
\\
O_{3,t} & =\exp(rt)X_t \partial_xU(t,X_t)+ G_3
(t,\partial_xU(t,X_t)),\\
O_{4,t}&= G_4(t,\partial_xU(t,X_t)),
\end{align*} 
are local martingales.
\end{corollary}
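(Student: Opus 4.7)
The plan is to combine Theorem~\ref{theorem:merton1} with Theorem~\ref{theorem_noether}: the former identifies the families of contact symmetries admitted by the Merton HJB equation, and the latter converts each such symmetry into a local martingale when evaluated along the optimally controlled trajectory.

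First I would note that equation~\eqref{eq:main} is precisely the HJB equation~\eqref{eq:hjbH} for the Markovian Merton problem, as follows from the closed-form computation of $H$ that precedes the statement of Theorem~\ref{theorem:merton1}. Hence the hypothesis of Theorem~\ref{theorem_noether} that $U$ be a sufficiently regular solution to the HJB equation is exactly the assumption we are making. I would then check that Assumption~\ref{ass:max1} is satisfied in this setting: since $V$ is strictly concave (so $(V')^{-1}$ is well-defined), the explicit formulas~\eqref{eq:cstar}--\eqref{eq:gammastar} exhibit a measurable maximizer $\mathcal{A}(t,x,u_x,u_{xx}) = (c^\star(t), \gamma^\star(t))$ of $\mathcal{H}$ whenever $u_{xx}<0$ and $u_x$ lies in the range of $V'$, which is automatic in the regime where the problem is well-posed.

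Next, Theorem~\ref{theorem:merton1} guarantees that each of $\Omega_1,\Omega_2,\Omega_3,\Omega_4$ is a contact symmetry of~\eqref{eq:main} as soon as the corresponding $G_i$ solves the linear PDE~\eqref{eq:K1}--\eqref{eq:K4}. I would then apply Theorem~\ref{theorem_noether} separately for each symmetry, so that the process
\[
\Omega_i\bigl(t, X_t, U(t,X_t), \partial_x U(t,X_t)\bigr)
\]
is a local martingale for every $i \in \{1,2,3,4\}$. For $i=1,3,4$ this expression coincides verbatim with $O_{i,t}$. For $i=2$, the symmetry $\Omega_2=-u+G_2(t,u_x)$ yields $-U(t,X_t)+G_2(t,\partial_x U(t,X_t))$ as a local martingale; since local martingales form a vector space and equation~\eqref{eq:K2} is linear in $G_2$, replacing $G_2$ by $-G_2$ (which still solves~\eqref{eq:K2}) gives the stated $O_{2,t}$.

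There is really no hard step in this proof: it is a direct specialisation of the general Noether result to the four families of symmetries produced by Theorem~\ref{theorem:merton1}. The only point requiring brief care is the sign bookkeeping for $O_{2,t}$ and a check that the assumed classical solution $U$ makes $(c^\star,\gamma^\star)$ in~\eqref{eq:cstar}--\eqref{eq:gammastar} admissible controls, so that $X_t$ is genuinely the optimal state process to which Theorem~\ref{theorem_noether} applies.
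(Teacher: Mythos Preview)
Your approach is exactly the paper's: the corollary is presented immediately after the sentence ``Finally, Theorem~\ref{theorem:merton1} and Theorem~\ref{theorem_noether} allow us to obtain the explicit forms of the local martingales of Merton's model,'' and that sentence is the entirety of the paper's justification. So the plan---feed each $\Omega_i$ from Theorem~\ref{theorem:merton1} into Theorem~\ref{theorem_noether}---is precisely what is intended.

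One correction to your sign bookkeeping for $O_{2,t}$: equation~\eqref{eq:K2} is linear but \emph{inhomogeneous} in $G_2$ (the source term $2u_x\partial_{u_x}h_V-2h_V$ does not depend on $G_2$), so if $G_2$ solves~\eqref{eq:K2} then $-G_2$ generally does not. Your claim ``replacing $G_2$ by $-G_2$ (which still solves~\eqref{eq:K2})'' is therefore false. The mismatch you spotted between $\Omega_2=-u+G_2$ and the printed $O_{2,t}=U+G_2$ is a typo in the corollary statement, not something to be repaired in the proof; the local martingale produced by $\Omega_2$ via Theorem~\ref{theorem_noether} is $-U(t,X_t)+G_2(t,\partial_xU(t,X_t))$. (Similarly, comparing $\Omega_1$ with $O_{1,t}$ shows a missing factor $X_t$ in front of $\partial_xU(t,X_t)^{r+1}$.) Apart from this, your argument is complete and matches the paper.
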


\begin{proof}[Proof of Theorem \ref{theorem:merton1}]
The generating function $\Omega$ is a (contact) symmetry of the PDE if and
only if the following set of determining equations holds
\begin{align}
\frac{\delta}{2}  u_x^2 \partial_{u_x u_x} \Omega + u_x \partial_u \Omega \cdot
\partial_{u_x} K + \partial_x \Omega \cdot \partial_{u_x} K - \partial_u
\Omega \cdot K -\partial_{u_x} \Omega \cdot \partial_{x} K  + \partial_t \Omega & = 0, \label{eq:determining1}\\
{\delta} u_x^2 \partial_{u u_x} \Omega +  {\delta} u_x \partial_{u_x x} \Omega -
{\delta}  \partial_x \Omega & = 0,  \label{eq:determining2}\\
\frac{\delta}{2}  u_x^2 \partial_{u u} \Omega + \delta u_x \partial_{u x} \Omega +
\frac{\delta}{2} \partial_{x x} \Omega & = 0.  \label{eq:determining3}
\end{align}
We can differentiate equation~{\eqref{eq:determining2}} with respect to~$u$
and equation~{\eqref{eq:determining3}} with respect to $u_x$, and equate the
term $\partial_{u u u_x} \Omega$ to obtain
\begin{equation}
(4 \partial_{u u} \Omega + \partial_{u u_x x} \Omega) u_x^2 +
(\partial_{u_x x x} \Omega + 7 \partial_{u x} \Omega) u_x + 2
\partial_{x x} \Omega \label{eq:determining4} = 0.
\end{equation}
Differentiating equation~{\eqref{eq:determining2}} with respect to~$x$, we can
get an expression of $\partial_{u u_x x} \Omega$ in terms of
$\partial_{u_x x x} \Omega$ and~$\partial_{x x} \Omega$. 
Replacing now the
obtained expression in equation~{\eqref{eq:determining4}} yields
\begin{equation}
u_x \partial_{u u} \Omega + \partial_{u x} \Omega = 0
\label{eq:determining5} .
\end{equation}
If we differentiate equation~{\eqref{eq:determining2}} with respect to~$u$ and
use equation~{\eqref{eq:determining5}}, then we have
\begin{equation}
\partial_{u x} \Omega = 0. \label{eq:determining6}
\end{equation}
Inserting equation~{\eqref{eq:determining6}} in equation~{\eqref{eq:determining5}},
we get
\begin{equation*}
\partial_{u u} \Omega = 0, 
\end{equation*}
from which, thanks to equations~{\eqref{eq:determining3}} and~{\eqref{eq:determining6}}, we obtain
\[ \partial_{x x} \Omega = 0. \]
This means that ${\Omega}$ is a function of the form
\begin{equation}
\Omega (t, x, u, u_x) = f_1 (t, u_x) u + f_2 (t, u_x) x + f_3 (t, u_x) .
\label{eq:determining8}
\end{equation}
If we replace expression~{\eqref{eq:determining8}} inside the determining equations~{\eqref{eq:determining1}},~{\eqref{eq:determining2}}, and~{\eqref{eq:determining3}}, we have that $f_1$, $f_2$, and $f_3$ have to satisfy the
following set of equations
\begin{align}
u_x^2 \partial_{u_x u_x} f_1 + 2 \partial_t f_1 & = 0 ,
\label{eq:determining9}\\
u_x^2 \partial_{u_x u_x} f_2 + 2 \partial_t f_2 -2f_2r &= 0 ,
\label{eq:determining10}\\
- 2 u_x f_1 \cdot \partial_{u_x} K - 2 f_2 \cdot \partial_{u_x} K + 2 f_1
\cdot K +\delta u_x^2 \partial_{u_x u_x} f_3 + 2 \partial_t f_3 & = 0,
\label{eq:determining11}\\
u_x^2 \partial_{u_x} f_1 + u_x \partial_{u_x} f_2 - f_2 & = 0 .
\label{eq:determining12}
\end{align}
Solving equation~{\eqref{eq:determining12}} with respect to~$f_2$, we obtain
that
\begin{equation}
f_2 = -u_x f_1 + g_1 (t) u_x . \label{eq:determining13}
\end{equation}
Replacing the expression~{\eqref{eq:determining13}} in equation~{\eqref{eq:determining10}} and using equation~{\eqref{eq:determining9}}, we
have the equation
\[- 2 u_x^2 \partial_{u_x} f_1 + 2 u_x \partial_t g_1+2ru_xf_1-2rg_1u_x = 0, \]
from which we get that 
\begin{equation}
f_1 = \left[ d(t)+ \frac{b(t)}{r} \right] u_x^r-   \frac{b(t)}{r}, \label{eq:determining14} 
\end{equation}
where $b(t)=\partial_t g_1-rg_1$.
Replacing equation~{\eqref{eq:determining14}} in equation~{\eqref{eq:determining9}}, we obtain
\begin{align*}
r(r-1) \left[ d(t)+\frac{b(t)}{r} \right] +2 \left( d^\prime(t)+\frac{b^\prime(t)}{r} \right) & =0,\\
\partial_{t,t}g_1-r\partial_{t}g_1 & = 0,
\end{align*}
giving
\[ b(t)=c_2, \quad d(t) = d_1\exp \left( \frac{-r(r-1)t}{2} \right)-\frac{d_2}{r} ,  \quad g_1(t) =d_3  \exp(rt)-\frac{d_2}{r} , \]
for some arbitrary constants $d_1$, $d_2$, and~$d_3$.
Therefore, we have
\begin{equation*}
f_1 =d_1\exp\left(\frac{-r(r-1)t}{2}\right)u_x^r-\frac{d_2}{r}.
\end{equation*}
By~{\eqref{eq:determining13}}, we obtain
\begin{equation*}
f_2 =-d_1\exp \left( \frac{-r(r-1)t}{2} \right) u_x^{r+1}+ d_3 \exp(rt) u_x.
\end{equation*}
Inserting the previous expression of $f_1$, $f_2$, and $f_3$ in~\eqref{eq:determining11}, we get that $\Omega$ is a contact symmetry of equation~{\eqref{eq:main}} if and only if it is a linear combination of the following expressions
\begin{align*}
\Omega_1 & = \exp \left( -\frac{r(r-1)t}{2} \right) [u\cdot u_x^r-x u_x^{r+1}] + G_1 (t,u_x), & & d_1 = 1,d_2=d_3=0, \\
\Omega_2 & =-u+ G_2 (t,u_x), & &  d_2 =- r,d_1= d_3 = 0,
\\
\Omega_3 & =\exp(r t) xu_x  + G_3
(t,u_x), & &d_3 = 1,d_1=d_2=0,\\
\Omega_4 &= G_4(t,u_x), & & d_1 =d_2=d_3=0,
\end{align*}
where $G_1$, $G_2$, $ G_3$, and $G_4$ are smooth solutions to the PDEs satisfying equations~{\eqref{eq:K1}},~{\eqref{eq:K2}},~{\eqref{eq:K3}}, and~{\eqref{eq:K4}}.
\end{proof}

Equations~{\eqref{eq:K1}},~{\eqref{eq:K2}},~{\eqref{eq:K3}}, and~{\eqref{eq:K4}} can be solved
explicitly for some special form of~$K (t,x,u_x)$. 
Taking, in particular, the following two expressions 
\[K_1=h_1+rxu_x, \quad h_1=-\exp(-\rho t)[\log(u_x)+\rho t+1], \]
and 
\[K_2=h_2+rxu_x, \quad h_2= -\exp \left(\frac{\rho t}{\theta -1 }\right) u_x^{\frac{\theta}{\theta-1}}\frac{\theta -1}{\theta}, \] 
derived by taking the \emph{isoelastic utility} functions, also known as { \em constant relative risk aversion} utilities (see~\cite{pratt_risk_1978}) defined~as
\[ V(z)=\log(z) \qquad \text{and} \qquad V(z)=\frac{z^{\theta}}{\theta}, \quad \theta \in \mathbb{R}, \]
respectively. 
If we denote by $\Omega^1_1= \exp \left( -\frac{r(r-1)t}{2} \right) [u\cdot u_x^r-x u_x^{r+1}] + G_1^1 (t,u_x)$ and by $\Omega^2_1= \exp \left( -\frac{r(r-1)t}{2} \right) [u\cdot u_x^r-x u_x^{r+1}] + G_1^2 (t,u_x)$ the symmetries of the equation~\eqref{eq:main} when $K=K_1$ and $K=K_2$, respectively, we have that $G^1_1$ solves the equation
\begin{equation}
\Gamma_1(t,u_x) +\delta(t) u_x^2\partial_{u_x, u_x}G^1_1+2\partial_{t}G^1_1=0,
\label{eq:h1}
\end{equation}
where
\begin{equation}
\Gamma_1(t,u_x) = 2\exp \left( -\frac{r(r-1)t-\rho t}{2} \right) [1-2u_x^r-\log(u_x)u_x^r-\rho t u_x^r].
\label{eq:h2}
\end{equation}
Making the ansatz
\begin{equation*}
G^1_1(t,u_x)=\phi_1(t)u_x^r+\phi_2(t)u_x^r\log(u_x)+\phi_3(t),
\end{equation*}
we have that $G^1_1$ solves~\eqref{eq:h1} if and only if $\phi_1,\phi_2,\phi_3$ solve the following~ODEs
\begin{align*}
\phi_1^\prime= \, &2\exp \left( -\frac{r(r-1)t}{2}-\rho t \right) (2+\rho t) - \delta(t)r(r-1)\phi_1-\delta(t)(r-2)\phi_2-\delta(t)\phi_2, 
\\
\phi_2^\prime= \, &2\exp \left( -\frac{r(r-1)t}{2}-\rho t \right) -\delta(t) r(r-1)\phi_2,  
\\
\phi_3^\prime=\, & 2\exp \left( \frac{r(r-1)t}{2}-\rho t \right). 
\end{align*}
In the same way  $G^2_1$ solves 
\begin{equation}
\Gamma_2(t,u_x) + u_x^2\partial_{u_x, u_x}G^2_1+2\partial_{t}G^2_1=0,
\label{eq:G21}
\end{equation}
where
\begin{equation*}
\Gamma_2(t,u_x) = 2u_x^{\frac{\theta}{\theta-1}}\exp \left(-\frac{r(r-1)t}{2}+\frac{\rho}{\theta-1} t \right) \left[ 1-2\frac{\theta-1}{\theta}u_x^r \right] .
\end{equation*}
With the ansatz
\begin{equation*}
G^2_1(t,u_x)=\phi_1(t)u_x^{\frac{\theta}{\theta-1}}+\phi_2(t)u_x^{\frac{\theta}{\theta-1}+r-1},
\end{equation*}
equation~\eqref{eq:G21} holds if and only if $\phi_1$, $\phi_2$, and $\phi_3$ solve the following~ODEs
\begin{align*}
\phi_1^\prime =\, &\exp \left( -\frac{r(r-1)t}{2}+\frac{\rho}{\theta-1} t \right) -\frac{\delta(t)}{2} \left( \frac{\theta}{\theta -1} \right) \left( \frac{\theta}{\theta -1}-1 \right) \phi_1, 
\\
\phi_2^\prime= \, &4\frac{\theta -1}{\theta} \exp \left( -\frac{r(r-1)t}{2}+\frac{\rho}{\theta-1} t \right) -\delta(t) \left(\frac{\theta}{\theta -1}+r \right) \left( \frac{\theta}{\theta -1}+r-1 \right) \phi_2. 
\end{align*}
If we denote by $\Omega^1_2= -u+ G_2^1 (t,u_x)$ and by $\Omega^2_2= -u+ G_2^2 (t,u_x)$ the symmetries of the equation~\eqref{eq:main} when $K=K_1$ and $K=K_2$, respectively, then we get that $G_2^{i}$, $ i=1,2$, solve~\eqref{eq:K2} with $h_1$ and $h_2$ given by~\eqref{eq:h1} and~\eqref{eq:h2}. \\
With the ansatz
\begin{equation*}
G^1_2(t,u_x)=\phi_1(t)+ \phi_2(t)\log(u_x),
\end{equation*}
The function $G^1_2$ solves~\eqref{eq:K2} (with $h=h_1$) if and only if $\phi_1$ and $\phi_2$ solve the following~ODEs
\begin{align*}
\phi_1'(t)=&\frac{\delta(t)}{2}\phi_2(t) - \exp(-\rho t)-\exp(-\rho t)(\rho t+1) 
\\
\phi_2'(t)=&-\exp(-\rho t).
\end{align*}
With the ansatz
\begin{equation*}
G^2_2(t,u_x)=\phi_1(t)u_x^{\frac{\theta}{\theta-1}},
\end{equation*}
the function  $G^2_2$ solves~\eqref{eq:K2} (with $h=h_2$) if and only if $\phi_1$ solves the following~ODE
\begin{equation*}
\phi'_1(t)=-\frac{\delta(t)\theta}{2(\theta-1)^2}\phi_1-\frac{1}{\theta}\exp \left( \frac{\rho}{\theta-1} t\right).
\end{equation*}
If we denote by $\Omega^1_3= \exp(rt)xu_x+ G_3^1(t,u_x)$ and by $\Omega^2_3= \exp(rt)xu_x+ G_3^2 (t,u_x)$ the symmetries of the equation~\eqref{eq:main} when $K=K_1$ and $K=K_2$, respectively, then $G_3^{i}$, $ i=1,2$, solve~\eqref{eq:K3} with $h_1$ and $h_2$ given above, respectively. \\
With the ansatz
\begin{equation*}
G^1_3(t,u_x)=\phi_1(t)+ \phi_2(t)\log(u_x),
\end{equation*}
the function $G^1_3$ solves~\eqref{eq:K2} (with $h=h_1$) if and only if $\phi_1$ and $\phi_2$ solve the following~ODEs
\begin{align*}
\phi_1'(t)=&\frac{\delta(t)}{2}\phi_2(t) - \exp((r-\rho) t),\\ 
\phi_2'(t)=&0. 
\end{align*}
With the ansatz
\begin{equation*}
G^2_3(t,u_x)=\phi_1(t)u_x^{\frac{\theta}{\theta-1}},
\end{equation*}
The function $G^2_3$ solves~\eqref{eq:K3} (with $h=h_2$) if and only if $\phi_1$ solves the following~ODE
\begin{equation*}
\phi'_1(t)=-\frac{\delta(t)\theta}{2(\theta-1)^2}\phi_1+\exp \left(\left( r+\frac{\rho}{\theta-1} \right) t\right) .
\end{equation*}

\subsection{Non-Markovian case}\label{subsection:nonmarkovian}

We consider here the case where $\mu(t)$ and $\sigma(t)$ are predictable continuous stochastic processes with respect to the filtration generated by~$\mathcal{F}_t$, that is, the problem now fits in the more general model treated in Section~\ref{s:stoch-hjb}.
This case is relevant, for example, when we are considering stochastic volatility models (see, e.g.,~\cite{benth_merton_2003,fouque_portfolio_2017,lorig_portfolio_2016} for stochastic volatility models and~\cite{Oksendal2016} for the non-Markovian Merton problem of the form approached here).
We assume also that $g(x,\omega)$ is a $\mathcal{F}_t$ random field. In this case, the value function is a random field depending on the time $t$ and the variable $x$ of the form 
\[U(t,x)=\mathbb{E}\left[\left.\int_t^T L(s,\alpha_t) \mathd s+ g(X_T,\omega)\right|\mathcal{F}_t\cap\{X_t=x\}\right].\] 
The random field $U$ satisfies the following backward stochastic~PDE
\begin{equation}\label{eq:stochasticMerton}
\mathd U(t,x)+\sup_{(c,\gamma)\in K}\mathcal{H}^S(t,x,\nabla U(t,x), D^2U(t,x),\nabla \Psi(t,x) ,(c,\gamma))\mathd t=\Psi(t,x)\mathd W_t
\end{equation}
where 
\[\mathcal{H}^S(t,x,u_x, u_{xx},\psi_x ,(c,\gamma))=\exp(-\rho t)V(c)+(\gamma(\mu(t)-r)+r)xu_x-c u_x+x\sigma(t)\gamma\psi_x+\frac{1}{2}u_{xx} \sigma(t)^2 \gamma^2 x^2. \]
The optimal value of the function $(c, \gamma)$ is given by the solution to the system
\begin{align*}
\partial_{c}\mathcal{H}=\exp(-\rho t)V^\prime(c)-u_x & =0,\\
\partial_{\gamma}\mathcal{H}=(\mu(t)-r)xu_x+x\sigma(t)\psi_x+u_{xx}\sigma^2(t) x^2 \gamma  &=0 ,
\end{align*}
which means that 
\begin{equation}
\gamma^*=-\frac{(\mu(t)-r)u_x+\sigma(t)\psi_x}{xu_{xx}\sigma(t)^2},\label{eq:gammastar2}
\end{equation}
while $c^*$ is given by equation \eqref{eq:cstar}.
This implies that 
\[
H^S(t,x,u_x,u_{xx},\psi_x)=\frac{((\mu(t)-r)u_x+\sigma(t)\psi_x)^2}{2\sigma(t)^2u_{xx}}+K(t,x,u_x),\]
where $K(t,x,u_x)$ is given by equation \eqref{eq:MertonK}. 
In the following, we write 
\[\delta^S(t)=\frac{(\mu(t)-r)^2}{\sigma(t)^2}, \]
where we recall that here $\mu$ and $\sigma$ are generic predictable continuous stochastic processes. 
So we consider a generator function $\Omega^S(t,u,u_x,\omega)$, depending explicitly on~$\omega$.

\begin{theorem}\label{theorem:merton2}
The generator function $\Omega^S(t,u,u_x,\omega)$ is a symmetry of equation~\eqref{eq:stochasticMerton} in the sense of Definition~\ref{def:contact-symmetry-for-dU} if and only if $\Omega^S$ has one of the following forms
\begin{align*}
\Omega_1 & = \exp \left( -\frac{r(r-1)t}{2} \right) [u\cdot u_x^r-x u_x^{r+1}] + G_1 (t,u_x), \\
\Omega_2 & =-u+ G_2 (t,u_x), 
\\
\Omega_3 & =\exp(r t) xu_x  + G_3
(t,u_x),\\
\Omega_4 &= G_4(t,u_x),
\end{align*}
where $G_1^S,G_2^S,G_3^S \colon \mathbb{R}_+ \times \mathbb{R}  \times \Omega \rightarrow \mathbb{R}$ are smooth predictable random fields satisfying the following random~PDEs
\begin{align}
2\exp\left(-\frac{r(r-1)t}{2}\right)u_x^{r} h_V+\delta^S(t) u_x^2 \partial_{u_xu_x}G_1+2\partial_tG_1 & = 0, \label{eq:K1S}\\
2u_x\partial_{u_x}h_V-2 h_V+\delta^S(t)u_x^2  \partial_{u_xu_x}G_2+2\partial_tG_2 & = 0, \label{eq:K2S}\\
2 \exp(r t) u_x \partial_{u_x}h_V+\delta^S(t)u_x^2  \partial_{u_xu_x}G_3+2\partial_tG_3 &=0,\label{eq:K3S}\\
\delta^S(t)u_x^2  \partial_{u_xu_x}G_4+2\partial_tG_4&=0.\label{eq:K4S}
\end{align}
\end{theorem}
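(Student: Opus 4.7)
The plan is to mimic, step by step, the proof of Theorem~\ref{theorem:merton1}. The stochastic Hamiltonian $H^S$ has the same structure as the Markovian $H$, with the single modification that the term $(\mu-r)^2 u_x^2/(\sigma^2 u_{xx})$ is replaced by $((\mu-r)u_x+\sigma\psi_x)^2/(\sigma^2 u_{xx})$, which is quadratic in the additional jet variable $\psi_x$. I would therefore substitute $H^S$ into the symmetry condition from Definition~\ref{def:contact-symmetry-for-dU} and regard the resulting equation as a polynomial identity in the jet variables, where $\psi_x$ (together with the $x$-derivatives $\psi_{xx}$ arising in $\mathcal{D}_x H^S$) is to be treated as an independent coordinate alongside $u_{xx}$ and $u_{xxx}$. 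Since $\Omega^S$ is assumed to depend only on $(t,x,u,u_x,\omega)$, one may separate the symmetry condition into contributions of different polynomial degree in $\psi_x$.

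The $\psi_x$-independent part of the condition is formally identical to the Markovian determining equations~\eqref{eq:determining1}--\eqref{eq:determining3}, with the constant $\delta(t)$ replaced by the adapted process $\delta^S(t,\omega)$. I would then apply verbatim the algebraic reduction in the proof of Theorem~\ref{theorem:merton1}: first deduce $\partial_{uu}\Omega^S=\partial_{ux}\Omega^S=\partial_{xx}\Omega^S=0$, then the decomposition $\Omega^S=f_1(t,u_x,\omega)u+f_2(t,u_x,\omega)x+f_3(t,u_x,\omega)$, and finally integrate in $u_x$ the residual ODEs for $f_1,f_2$ with $t$-coefficients now promoted to $\mathcal{F}_t$-adapted random functions. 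Since the ODEs for $f_1$ and $f_2$ do not involve $\delta$ at all, they produce the same four families $\Omega_1,\ldots,\Omega_4$ as in the deterministic case; the $\delta$-dependence is confined to the equation for $f_3$, where $\delta$ appears as a multiplicative factor, so the substitution $\delta\leadsto\delta^S$ is automatic and yields precisely the random PDEs \eqref{eq:K1S}--\eqref{eq:K4S}.

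It only remains to verify that the $\psi_x$- and $\psi_x^2$-components of the full symmetry condition, which have no Markovian counterpart, vanish identically on each of the four families. This is the main obstacle I anticipate: the bookkeeping of the $\psi_x$-dependent contributions coming from $\partial_{u_x}H^S$, $\partial_{u_{xx}}H^S$, and $\mathcal{D}_x H^S$ is lengthy, and one has to keep track of linear, quadratic, and mixed $\psi_x\psi_{xx}$ contributions. I expect this verification to reduce to a direct algebraic check that exploits the constraints $\partial_{uu}\Omega^S=\partial_{ux}\Omega^S=\partial_{xx}\Omega^S=0$ already imposed by the $\psi_x$-independent analysis, together with the fact that the four candidate generators depend on $u$ and $x$ in a very constrained polynomial way: the extra terms should collapse to zero once these second partial derivatives vanish, closing the proof.
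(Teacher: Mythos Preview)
Your strategy—reducing the determining equation of Definition~\ref{def:contact-symmetry-for-dU} to the one already solved in Theorem~\ref{theorem:merton1}, with $\delta$ replaced by the adapted process $\delta^S$—is exactly the route the paper takes, but you are proposing to do strictly more than the paper does. The paper's proof is a single sentence: it observes that $H^S(t,x,u_x,u_{xx},0)$ is formally identical to the Markovian Hamiltonian of Subsection~\ref{subsection:Markovian} (with $\delta^S$ in place of $\delta$) and then invokes the argument of Theorem~\ref{theorem:merton1} verbatim. No separate treatment of the $\psi_x$- or $\psi_x^2$-contributions is written out.

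Your plan to split the full symmetry condition by polynomial degree in $\psi_x$ and verify that the non-constant pieces vanish on each of the four candidate families is therefore an elaboration beyond what the paper provides; it makes explicit a verification the authors either regard as automatic or leave to the reader. The observation you make—that the ODEs for $f_1,f_2$ are $\delta$-free, so the four families are forced already by the $\psi_x$-independent part, and $\delta$ (hence the randomness) enters only through the residual equation for $f_3$—is precisely the mechanism behind the paper's one-line reduction. If you wish to match the paper's level of detail you may stop after the $\psi_x=0$ step; if you want a complete argument, the algebraic check you anticipate does go through once the constraints $\partial_{uu}\Omega^S=\partial_{ux}\Omega^S=\partial_{xx}\Omega^S=0$ are imposed, since the $\psi_x$-dependence of $H^S$ sits entirely in the factor $((\mu-r)u_x+\sigma\psi_x)^2/u_{xx}$ and interacts with the determining equations exactly as the term $\delta u_x^2/u_{xx}$ does in the Markovian computation.
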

\begin{proof}
Since 
\[H^S(t,x,u_x,u_{xx},0)=\frac{\delta^S(t)}{2u_{xx}}+K(t,x,u_x),\] 
which is formally equal to $H$ defined in Subsection \ref{subsection:Markovian}, the theorem can be easily proven using the same argument exploited in the proof of Theorem \ref{theorem:merton1}.
\end{proof}

\begin{remark}
The symmetries $\Omega^S_i$ of Theorem \ref{theorem:merton2} depend on $\omega\in \mathcal{W}$ since the functions $G_i$ solve the random equations \eqref{eq:K1S}, \eqref{eq:K2S}, \eqref{eq:K3S}, and \eqref{eq:K4S} (where the random dependence is given by $\delta^S(t)$).
\end{remark}

\begin{corollary}
Let $(U(t,x),\Psi(t,x))$ be a classical solution to equation \eqref{eq:stochasticMerton} and let $X_t$ be the solution to equation \eqref{eq:mainMerton1} with $(\gamma,c)$ satisfying equalities \eqref{eq:cstar} and \eqref{eq:gammastar2}. 
Then, the processes 
\begin{align*}
\tilde{O}_{1,t} & = \exp \left( -\frac{r(r-1)t}{2} \right) [U(t,X_t)\partial_xU(t,X_t)^r-\partial_xU(t,X_t)^{r+1}] + G_1 (t,\partial_xU(t,X_t))-I_1(t,U,\nabla U, \nabla \Psi),   \\
\tilde{O}_{2,t} & =U(t,X_t)+ G_2 (t,\partial_xU(t,X_t))-I_2(t,U,\nabla U, \nabla \Psi), 
\\
\tilde{O}_{3,t} & =\exp(rt)X_t \partial_xU(t,X_t)+ G_3
(t,\partial_xU(t,X_t))-I_3(t,U,\nabla U, \nabla \Psi),\\
\tilde{O}_{4,t}&= G_4(t,\partial_xU(t,X_t))-I_4(t,U,\nabla U, \nabla \Psi), 
\end{align*} 
are local martingales. 
Here, $I_1$, $I_2$, $I_3$ and $I_4$ are the integral expressions associated with $O_{1,t}$, $O_{2,t}$, $O_{3,t}$ and $O_{4,t}$, respectively, by the relation given in equation~\eqref{eq:tildeO}.
\end{corollary}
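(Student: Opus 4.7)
The plan is to apply Theorem~\ref{theorem_noether2} directly, using the four contact symmetries supplied by Theorem~\ref{theorem:merton2}. First, I would verify the hypotheses of Theorem~\ref{theorem_noether2}. Assumption~\ref{ass:max2} holds because the pair $(c^\ast, \gamma^\ast)$ from~\eqref{eq:cstar} and~\eqref{eq:gammastar2} is the (unique) maximizer of $\mathcal{H}^S(t, x, u_x, u_{xx}, \cdot, \psi_x)$, obtained via the first-order conditions for the strictly concave problem in $(c, \gamma)$. The assumption that $(U, \Psi)$ is a classical solution to~\eqref{eq:stochasticMerton} supplies the continuous differentiability in time and the $C^3$-regularity in space that Theorem~\ref{theorem_noether2} requires, and Theorem~\ref{theorem:merton2} certifies that each $\Omega_i$ is a contact symmetry of~\eqref{eq:stochasticMerton} in the sense of Definition~\ref{def:contact-symmetry-for-dU}.

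With the hypotheses in place, Theorem~\ref{theorem_noether2} applied to each $\Omega_i$ yields a local martingale of the form
\[
\Omega_i(t, X_t, U(t, X_t), \nabla U(t, X_t)) \; - \; I_i(t, U, \nabla U, \nabla \Psi),
\]
where $I_i$ is exactly the integral correction term prescribed by formula~\eqref{eq:tildeO} specialized to the symmetry~$\Omega_i$. Substituting the four explicit expressions of Theorem~\ref{theorem:merton2} into the non-integral leading part reproduces verbatim the processes $\tilde{O}_{1,t}, \ldots, \tilde{O}_{4,t}$ from the statement, which delivers the claim.

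The only subtle point is a bookkeeping one in the correction terms~$I_i$. Because each $\Omega_i$ has a very specific polynomial dependence on $(u, u_x)$ (it is at most linear in~$u$ for $i=2,3,4$ and exactly of degree~$r+1$ in $u_x$ for $i=1$), many of the mixed second-order partial derivatives $\partial_{uu}\Omega_i$, $\partial_{x^i u}\Omega_i$, $\partial_{u u_{x^j}}\Omega_i$ appearing in~\eqref{eq:tildeO} vanish, and the surviving contributions come from $\partial_{u_x u_x} G_i$ together with the explicit polynomial part. This should simplify $I_i$ considerably; however, the martingale property itself is an immediate consequence of Theorem~\ref{theorem_noether2} once the symmetries are recognized, so this simplification is cosmetic and not the real work of the proof.
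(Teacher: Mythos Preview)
Your proposal is correct and follows exactly the paper's own argument: the paper's proof consists of the single sentence that the result follows from Theorem~\ref{theorem:merton2} and Theorem~\ref{theorem_noether2}, which is precisely what you do (with some extra, but harmless, verification of hypotheses). Your closing paragraph on simplifying the correction terms $I_i$ is additional commentary not present in the paper, and as you yourself note it is cosmetic rather than essential.
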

\begin{proof}
The first statement follows from Theorem \ref{theorem:merton2} and Theorem \ref{theorem_noether2}. 
\end{proof}

In the particular case where $r=0$, $V(z)=\frac{z^{\theta}}{\theta}$ (where $\theta \in \mathbb{R}$) or without the consumption $V(z)=0$, $c=0$, we can obtain the following stronger result.

\begin{corollary}\label{cor:stoch-merton}
Suppose that $r=0$ and $V(z)=\frac{z^{\theta}}{\theta}$. Then, we have that 
\[O_t= - U(t,X_t)-\frac{1}{\theta}X_t \partial_xU(t,X_t)\]
is a local martingale. Furthermore, if $V=0$ (and we consider $c=0$) we have that, for any $c_1,c_2 \in \mathbb{R}$,
\[O^{c_1,c_2}_t=c_1 U(t,X_t)+c_2 X_t \partial_x U(t,X_t) \]
is a local martingale.
\end{corollary}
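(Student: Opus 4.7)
My plan is to exhibit, in each of the two cases, explicit contact symmetries of the stochastic HJB equation~\eqref{eq:stochasticMerton} taken from the classification in Theorem~\ref{theorem:merton2}, then apply Theorem~\ref{theorem_noether2} and show that the It\^o correction in~\eqref{eq:tildeO} vanishes thanks to the special multiplicative structure of the Merton diffusion coefficient $\sigma(t,x,(c,\gamma),\omega) = x\gamma\,\sigma(t,\omega)$.

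\emph{First case ($V(z) = z^\theta/\theta$).} For isoelastic utility a direct computation gives
\[ h_V(t,u_x) = \frac{1-\theta}{\theta} \, \exp\!\left(\frac{\rho t}{\theta-1}\right) u_x^{\theta/(\theta-1)}, \]
so $h_V$ is homogeneous of degree $\theta/(\theta-1)$ in $u_x$ and Euler's relation yields $u_x \partial_{u_x} h_V = \tfrac{\theta}{\theta-1}\,h_V$. I would then look for a symmetry of the form $\Omega = \Omega_2 + \alpha\,\Omega_3$ with $G_2 = G_3 = 0$. By linearity of the determining equations~\eqref{eq:K2S}--\eqref{eq:K3S}, the free term with $r=0$ is $[2u_x\partial_{u_x}h_V - 2h_V] + \alpha\,[2u_x\partial_{u_x}h_V]$, which vanishes iff $(1+\alpha)\tfrac{\theta}{\theta-1} = 1$, i.e.\ $\alpha = -\tfrac{1}{\theta}$. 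This produces the contact symmetry $\Omega(t,x,u,u_x) = -u - \tfrac{1}{\theta}\, x u_x$.

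\emph{Second case ($V = 0$, $c = 0$).} Here $h_V \equiv 0$, so~\eqref{eq:K2S} and~\eqref{eq:K3S} are homogeneous and admit $G_2 = G_3 = 0$. With $r = 0$, both $\Omega_2 = -u$ and $\Omega_3 = x u_x$ are therefore contact symmetries in the sense of Definition~\ref{def:contact-symmetry-for-dU}.

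\emph{Vanishing of the It\^o correction.} The main technical step is to inspect the integral in~\eqref{eq:tildeO} for each candidate $\Omega$. In all three cases ($-u$, $x u_x$, and $-u - \tfrac{1}{\theta} x u_x$) the partial derivatives $\partial_{uu}\Omega$, $\partial_{xu}\Omega$, $\partial_{u_x u_x}\Omega$, and $\partial_{u u_x}\Omega$ all vanish, so only the two terms involving $\partial_{u_x}\Omega$ and $\partial_{x u_x}\Omega$ survive. After collecting them, the integrand becomes proportional to
\[ \bigl( x\, \partial_x \sigma^{\ell} - \sigma^{\ell} \bigr)\Psi^{\ell}_x. \]
The homogeneity $\sigma = x\gamma\,\sigma(t)$ of the Merton volatility gives $x\partial_x\sigma = \sigma$ identically, so the correction integral is zero. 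Theorem~\ref{theorem_noether2} then yields that $\Omega(t, X_t, U(t,X_t), \nabla U(t,X_t))$ is a local martingale in each case, which is exactly the claimed process $O_t$ in the first case, and $-U(t,X_t)$ and $X_t \partial_x U(t,X_t)$ in the second case. Linearity of the space of local martingales finishes the argument for arbitrary $(c_1, c_2) \in \mathbb{R}^2$.

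\emph{Main obstacle.} The conceptual content is modest; the genuine difficulty is bookkeeping inside~\eqref{eq:tildeO}. One must carefully expand the six groups of terms, verify which cross-derivatives of the ans\"atze are zero, and observe the cancellation that collapses the surviving integrand to $x\partial_x\sigma - \sigma$. Once that structural fact is recognised, the result follows because the multiplicative form of the Merton volatility and the homogeneity of $h_V$ in $u_x$ for power utility are precisely what make $G_2 = G_3 = 0$ admissible and the correction vanish.
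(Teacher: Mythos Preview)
Your proposal is correct and follows essentially the same route as the paper: you identify the same linear combination $\Omega_2-\tfrac{1}{\theta}\Omega_3$ (respectively $c_1\Omega_2+c_2\Omega_3$ when $V=0$) with $G_i\equiv 0$ as a contact symmetry of~\eqref{eq:stochasticMerton}, and then show that the It\^o correction in~\eqref{eq:tildeO} vanishes via the cancellation $x\,\partial_x\sigma-\sigma=0$ for the Merton volatility $\sigma(t,x,(c,\gamma))=x\gamma\sigma(t)$. The only cosmetic difference is that the paper packages this last step by invoking Corollary~\ref{cor:noether2} (your symmetries are Lie point symmetries $\Omega=cu-f(x)u_x$ with $f\,\partial_x\sigma-\sigma\,\partial_x f=0$), whereas you re-derive the content of that corollary by hand inside the integrand of~\eqref{eq:tildeO}; the underlying computation is the same.
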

\begin{proof}
If $V(z)=\frac{z^{\theta}}{\theta}$ we have $ h_V(t,u_x) =-\exp(-\rho t)u_x^{\frac{\theta}{\theta-1}}\frac{\theta -1}{\theta}$. 
This implies that $\left(\frac{1}{\theta}-1\right)\partial_{u_x}h_V-h_V=0$. So, using equation \eqref{eq:K2S} and \eqref{eq:K3S}, we get that 
\[\Omega_5=\Omega_2-\frac{1}{\theta}\Omega_3= - u-\frac{1}{\theta}xu_x+G_5(t,u_x),\]
where $G_5(t,u_x)$ is any solution to the equation 
\begin{equation}\label{eq:G5}
\delta^S(t)u_x^2 \partial_{u_xu_x}G_5+2\partial_tG_5=0,
\end{equation}
is a symmetry of the equation \eqref{eq:stochasticMerton}. 
A particular solution to equation~\eqref{eq:G5} is $G_5 \equiv 0$, in which case $\Omega_5$ has the form $\Omega_5=-u-\frac{xu_x}{\theta}$. But $-u-\frac{xu_x}{\theta}$ satisfies the hypotheses of Corollary \ref{cor:noether2}, from which we get the thesis. The second part of the corollary can be proven in a similar way.
\end{proof}

As already mentioned in the introduction, the construction of the martingales obtained in Corollaries~\ref{cor:merton} and~\ref{cor:stoch-merton} could be deeply connected to the well-known explicit solutions of Merton's optimal portfolio problem (see, e.g.,~\cite{rogers_optimal_2013} for a review and~\cite{benth_merton_2003,biagini_robust_2017} for recent developments on the explicit solutions of Merton's problem). 
The investigation of the link between these two notions will be the subject of a future paper.

\paragraph{Acknowledgments.}
The first, second, and fourth author are funded by Istituto Nazionale di Alta Matematica ``Francesco Severi'' (INdAM), Gruppo Nazionale per l'Analisi Matematica,
la Probabilità e le loro Applicazioni (GNAMPA): ``Lie's Symmetries Analysis of Stochastic Optimal Control Problems with Applications''.
The first and third author are funded by the DFG under Germany's Excellence Strategy - GZ 2047/1, project-id 390685813.

\bibliographystyle{abbrv}
\bibliography{Mertbib.bib}

\end{document}